\def\usehyperref{1}
\def\inmain{1}
\newcommand\texorpdfstring[2]{#1}
\newcommand\nolinkurl[1]{\url{#1}}
\newcommand{\Ccal}{{\mathcal C}}
\newcommand{\Dcal}{{\mathcal D}}
\newcommand{\Ecal}{{\mathcal E}}
\newcommand{\Fcal}{{\mathcal F}}
\newcommand{\Gcal}{{\mathcal G}}
\newcommand{\Hcal}{{\mathcal H}}
\newcommand{\Ical}{{\mathcal I}}
\newcommand{\Mcal}{{\mathcal M}}
\newcommand{\Ocal}{{\mathcal O}}
\newcommand{\Scal}{{\mathcal S}}
\newcommand{\Vcal}{{\mathcal V}}
\newcommand{\ccal}{\Ccal}
\newcommand{\dcal}{\Dcal}
\newcommand{\ecal}{\Ecal}
\renewcommand{\SS}{{\mathbb S}}
\newcommand{\ZZ}{{\mathbb Z}}
\newcommand{\GG}{{\mathbb G}}
\newcommand{\QQ}{{\mathbb Q}}
\g@addto@macro\bfseries{\boldmath}
\newcommand{\kr}{\kern -2pt}
\DeclareMathOperator{\id}{id}
\DeclareMathOperator{\Hom}{Hom}
\newcommand{\red}{{\text{\normalfont red}}}
\newcommand{\enh}{{\text{\normalfont enh}}}
\newcommand{\op}{{\text{\normalfont op}}}
\newcommand{\co}{{\normalfont\text{co}}}
\newcommand{\cl}{{\mathrm{cl}}}
\newcommand{\fp}{{\mathrm{fp}}}
\DeclareMathOperator{\Set}{Set}
\DeclareMathOperator{\Cat}{Cat}
\DeclareMathOperator{\colim}{colim}
\DeclareMathOperator{\Tot}{Tot}
\DeclareMathOperator{\Ind}{Ind}
\let\Pr\relax
\DeclareMathOperator{\Pr}{Pr}
\newcommand{\cn}{{\normalfont\text{cn}}}
\DeclareMathOperator{\CAlg}{CAlg}
\DeclareMathOperator{\Mod}{Mod}
\let \Bar \relax
\DeclareMathOperator{\Bar}{Bar}
\newcommand{\D}{\mathrm{D}}
\newcommand{\B}{{\mathrm{B}}}
\newcommand{\GL}{{\mathrm{GL}}}
\newcommand{\bg}{{\B G}}
\DeclareMathOperator{\Aff}{Aff}
\DeclareMathOperator{\Spec}{Spec}
\DeclareMathOperator{\QCoh}{QCoh}
\DeclareMathOperator{\IndCoh}{IndCoh}
\newcommand{\twoQCoh}{{2\kr\QCoh}}
\DeclareMathOperator{\Loc}{Loc}
\newcommand{\st}{{\text{\normalfont st}}}
\newtheorem{proposition}[subsubsection]{Proposition}
\newtheorem{lemma}[subsubsection]{Lemma}
\newtheorem{theorem}[subsubsection]{Theorem}
\newtheorem{corollary}[subsubsection]{Corollary}
\newtheoremstyle{note}{8.0pt plus 2.0pt minus 4.0pt}{8.0pt plus 2.0pt minus 4.0pt}{}{}{\bfseries}{.}{.5em}{} 
\theoremstyle{note}
\newtheorem{example}[subsubsection]{Example}
\newtheorem{remark}[subsubsection]{Remark}
\newtheorem{notation}[subsubsection]{Notation}
\newtheorem{warning}[subsubsection]{Warning}
\newtheorem{definition}[subsubsection]{Definition}
\title{Dualizability of derived categories of \linebreak algebraic stacks}
\author{G. Stefanich}
\date{}
\begin{document}


\begin{abstract}
We show that, for a Noetherian algebraic stack with quasi-affine diagonal $X$, the stable $\infty$-category of quasi-coherent sheaves on $X$ is dualizable if and only if the reduced identity component of the stabilizer of $X$ at every geometric point of positive characteristic is a torus. Along the way, we show that this condition on stabilizers is also equivalent to an array of other categorical conditions of interest.
\end{abstract}


\maketitle

\tableofcontents


\section{Introduction}

A fundamental result of Neeman \cite{NeemanGrothendieckDuality} states that the stable $\infty$-category of quasi-coherent sheaves on a quasi-compact separated scheme is compactly generated. Neeman's proof, and its subsequent extension to the quasi-separated context \cite{BondalvandenBergh}, relies at a fundamental level on the localization theorem of Thomason-Trobaugh, and essentially reduces compact generation to a Zariski local assertion.

When working with algebraic stacks, the Zariski local structure is more complex than in the case of schemes, and hence the question of compact generation is much more subtle. As it turns out, in fact, one does not always have compact generation: it was shown by Neeman \cite[remark~4.2]{NeemanHomotopy} that compact generation fails for the classifying stack of the additive group in positive characteristic. 

In general, for an algebraic stack $X$ the presence of a positive characteristic field valued point of $X$ whose stabilizer contains a copy of the additive group as a subgroup is an obstruction to compact generation \cite{HallNeemanRydh}. Following Hall, Neeman and Rydh, we call such stacks poorly stabilized, and use the term well stabilized to refer to algebraic stacks which are not poorly stabilized. We note that well stabilization is a fairly restrictive condition in positive characteristic: every affine algebraic group over an algebraically closed field contains a copy of the additive group unless its reduced identity component is a torus.

This paper is concerned with a weakening of the notion of compact generation known as dualizability. In \cite{HA}, Lurie constructed a symmetric monoidal structure on the $\infty$-category $\Pr_\st$ of presentable stable $\infty$-categories, and it is a basic fact that every compactly generated presentable stable $\infty$-category defines a dualizable object in $\Pr_\st$. The class of dualizable objects of $\Pr_\st$ includes however many $\infty$-categories of interest which are known not to be compactly generated; examples arise frequently in contexts such as microlocal sheaf theory, almost mathematics, condensed mathematics, and the theory of motives.

While more general than compact generation, dualizability allows for many of the same manipulations and constructions that one is used to in the compactly generated case. Notably, Efimov showed that K-theory may be defined for arbitrary dualizable stable $\infty$-categories \cite{Efimov}. And in the setting of algebraic geometry, the categorical K\"{u}nneth formulas in the style of \cite{BZFN} really only require dualizability as opposed to compact generation.

The first goal of this paper is to show that the failure of compact generation for poorly stabilized stacks is a shadow of a much more serious pathology: lack of dualizability. Conversely, we show that, under mild hypotheses, poor stabilization turns out to be the only obstruction to dualizability:

\begin{theorem}\label{theorem main}
Let $X$ be a Noetherian algebraic stack with affine stabilizers and separated diagonal.\footnote{In particular, the theorem applies whenever $X$ is Noetherian and has quasi-affine diagonal.} Then $\QCoh(X)$ is dualizable if and only if $X$ is well stabilized.
\end{theorem}

Theorem \ref{theorem main} may really be thought of as consisting of two separate results, one about dualizability and one about non-dualizability, with relative orthogonal methods of proof, which we summarize next.

The non-dualizability half of the theorem can easily be reduced to the case when $X$ is the classifying stack $\B\mathbb{G}_{a, k}$ of the additive group over a field of positive characteristic $k$. In this case, what we do is construct an endofunctor $F: \QCoh(\B\mathbb{G}_{a, k}) \rightarrow \QCoh(\B\mathbb{G}_{a,k})$ with a colimit preserving right adjoint, such that the colimit in $\Pr_{\st}$ of the sequence
\[
\QCoh(\B\mathbb{G}_{a, k}) \xrightarrow{F} \QCoh(\B\mathbb{G}_{a, k}) \xrightarrow{F} \QCoh(\B\mathbb{G}_{a, k}) \xrightarrow{F} \ldots
\]
is equivalent to the $\infty$-category of $k$-module spectra. General facts about filtered colimits of dualizable $\infty$-categories from \cite{Fully} then imply that, if $\QCoh(\B\mathbb{G}_{a, k})$ were dualizable, it would have nonzero compact objects, which is know to not be the case thanks to work of Hall, Neeman and Rydh \cite{HallNeemanRydh}. Our construction of $F$ makes use of certain symmetries of $\QCoh(\B \mathbb{G}_{a,k})$ which are only present in positive characteristic. 

This argument can be adapted to show that, even if we work in characteristic zero, $\QCoh(\B \mathbb{G}_{a, k}^\infty)$ is not dualizable.\footnote{We are grateful to Dennis Gaitsgory for pointing this out.} More generally, we will show in appendix \ref{appendix infinite type} that dualizability often fails for the classifying stacks of infinite type group schemes; this includes for instance the group $G[[t]]$ of formal paths in an affine algebraic group of positive dimension over a field of characteristic zero (see example \ref{example paths}).

Let us now comment on the proof of the dualizability half of theorem \ref{theorem main}. For algebraic stacks of finite type over a field of characteristic zero (where well stabilization is automatic), dualizability was established in work of Drinfeld and Gaitsgory \cite{DrinfeldGaitsgory}. Their proof depends on the fact that the $\infty$-category $\IndCoh(X)$ of ind-coherent sheaves on $X$ is compactly generated, which they ultimately prove by constructing enough coherent sheaves on $X$. Although, as we shall see below, $\IndCoh(X)$ turns out to be compactly generated in general for well stabilized $X$, Drinfeld and Gaitsgory's argument does not apply away from characteristic zero, since coherent sheaves on $X$ may not necessarily be compact in $\IndCoh(X)$.

What we do instead is prove dualizability by first generalizing a different theorem of Gaitsgory, namely his $1$-affineness for algebraic stacks in characteristic zero \cite{G}. This theorem concerns the $\infty$-category $\twoQCoh(X)$ of sheaves of presentable stable $\infty$-categories on $X$,  whose objects consist roughly speaking  of coherent families of assignments of an $A$-linear presentable stable $\infty$-category to each point $\Spec(A) \rightarrow X$ of $X$. 

In the same way that modules over $\Ocal(X)$ may be localized to give rise to quasi-coherent sheaves on $X$, there is a canonical functor
\[
\Loc_X : \Mod_{\QCoh(X)} \rightarrow \twoQCoh(X) 
\]
where the source is the $\infty$-category of presentable stable $\infty$-categories tensored over $\QCoh(X)$.  In \cite{G}, Gaitsgory defined a stack to be $1$-affine if $\Loc_X$ is an equivalence, and showed that every algebraic stack with affine diagonal and of finite type over a field of characteristic zero is $1$-affine. Our main result on this topic extends his theorem beyond the characteristic zero and affine diagonal setting, and in fact shows that, under mild hypotheses, the class of well stabilized stacks is precisely the class of stacks for which $1$-affineness holds:

\begin{theorem}\label{theorem main affineness}
Let $X$ be a Noetherian algebraic stack with affine stabilizers and separated diagonal. Then $X$ is $1$-affine if and only if $X$ is well stabilized.
\end{theorem}

We remark that theorem \ref{theorem main affineness} and its methods of proof are of a different nature to the $1$-affineness results we proved previously in \cite{Tannaka}. The upshot is that $1$-affineness is sensitive to the kinds of sheaves of categories one works with:  one may get results that apply even in the poorly stabilized context, but it requires one to work with sheaves of stable $\infty$-categories equipped with complete t-structures.

The fundamental fact that connects theorems \ref{theorem main} and  \ref{theorem main affineness}  is proven below as proposition \ref{proposition 1affine implies dualizable}: if $X$ is $1$-affine then $\QCoh(X)$ is dualizable. More generally, we will see that $1$-affineness guarantees dualizability of twisted forms of $\QCoh(X)$. As a consequence, we obtain:

\begin{theorem}\label{theorem main twisted}
Let $X$ be a Noetherian algebraic stack with affine stabilizers and separated diagonal and let $\mathcal{F}$ be a dualizable object of $\twoQCoh(X)$. Then its $\infty$-category of global sections $\Gamma(X, \mathcal{F})$ is dualizable.
\end{theorem}

The above result is a dualizability version of previous theorems that apply to compact generation \cite{ToenAzumaya, DAGXII, HallRydhPerfect}. We note that the known results on compact generation place fairly strict conditions on the diagonal of $X$; in particular, it seems to be unknown whether they hold in the case of the classifying stack of the multiplicative group.

As an application of theorem \ref{theorem main twisted}, we will prove the following variant of theorem \ref{theorem main} which applies to the $\infty$-category $\IndCoh(X)$ of ind-coherent sheaves of $X$:

\begin{theorem}\label{theorem main indcoh}
Let $X$ be a Noetherian algebraic stack with affine stabilizers and separated diagonal. The the following are equivalent:
\begin{enumerate}[\normalfont \indent(1)]
\item $\IndCoh(X)$ is dualizable.
\item $\IndCoh(X)$ is compactly generated.
\item $X$ is well stabilized.
\end{enumerate}
\end{theorem}

We reiterate that, contrary to what the name suggests, $\IndCoh(X)$ is not generally the ind-completion of $\operatorname{Coh}(X)$, even in the well stabilized context. 
 Our proof of compact generation of $\IndCoh(X)$ for well stabilized stacks is in fact relatively indirect, and goes through the theory of dualizable $\infty$-categories. The fundamental ingredient is that an extension of presentable stable $\infty$-categories is dualizable then is automatically compactly generated (\cite[proposition~3.3]{Efimov}). Combined with Noetherian induction this reduces us to proving compact generation on some dense open substack, which is a manageable task. 
  
 We note that this same strategy could be employed to try to prove compact generation in the quasi-coherent setting, but it breaks down at a single point: one needs that compact generation on a closed substack implies compact generation for its formal completion, which is true for $\IndCoh$ but unclear for $\QCoh$.
 
 Our next result concerns a basic point in the homological algebra of quasi-coherent sheaves. Recall that for an algebraic stack $X$ the $\infty$-category $\QCoh(X)$ admits a t-structure, and there is a comparison map $\mathrm{D}(\QCoh(X)^\heartsuit) \rightarrow \QCoh(X)$ from the derived $\infty$-category of the heart. As it turns out, this map is not always an equivalence; this fails to be the case, once again, for the classifying stack of the additive group in positive characteristic \cite{NeemanComplete}, and, more generally, whenever $X$ is poorly stabilized \cite{HallNeemanRydh}. The following theorem provides a converse to this assertion:
 
 \begin{theorem}\label{theorem main derived}
 Let $X$ be a Noetherian algebraic stack with affine diagonal. Then $\QCoh(X)$ is the derived $\infty$-category of its heart if and only if $X$ is well stabilized.
 \end{theorem}
 
 As we shall see, the if direction in theorem \ref{theorem main derived} is deduced from the compact generation part of theorem \ref{theorem main indcoh}, which as we discussed above is a consequence of dualizability and hence it is ultimately dependent on our $1$-affineness result.
 
The difference between $\QCoh(X)$ and the derived $\infty$-category of its heart is a reflection of the potentially ill behavior of Postnikov towers in $\mathrm{D}(\QCoh(X)^\heartsuit)$, and is closely related to another potential pathology: the unbounded cohomological amplitude of products. Our next theorem shows that, once again, well stabilized stacks are precisely the ones for which this problem disappears:

 \begin{theorem}\label{theorem main products}
 Let $X$ be a Noetherian algebraic stack with affine diagonal. Then the following are equivalent:
\begin{enumerate}[\normalfont \indent(1)]
\item Products in $\QCoh(X)$ have bounded cohomological amplitude.
\item Products in $\D(\QCoh(X)^\heartsuit)$ have bounded cohomological amplitude.
\item $X$ is well stabilized.
\end{enumerate} 
 \end{theorem}
 
Under the additional hypothesis that $X$ has finite cohomological dimension, the boundedness of products and the fact that $\QCoh(X)$ is the derived category of its heart were proven previously by Hall \cite{HallRemarks}. This extra assumption places further restrictions on the stabilizer groups away from characteristic zero: for instance, classifying stacks of finite group schemes over a field are well stabilized but do not always have finite cohomological dimension. In fact, Hall's argument rests on the fact that affine covers of $X$ give rise to descendable $E_\infty$-algebras in the sense of \cite{Mathew}, which turns out to be false in general for well stabilized stacks (and essentially only holds when $X$ has finite cohomological dimension, see remark \ref{remark descendability}). From this perspective, we may regard $1$-affineness as a replacement for descendability which has the virtue of applying beyond the finite cohomological dimension setting.
 
 The methods that we use throughout the paper belong to spectral algebraic geometry. We will in fact in the main body of the paper use the term algebraic stack to mean spectral algebraic stack, and we will, by default, prove our results in that context.  The condition of having affine stabilizers is then naturally replaced by the condition of having $0$-affine diagonal. We will study this notion in section \ref{section 1affineness}; for the moment we just mention that  $0$-affineness of the diagonal is implied by having quasi-affine diagonal, and in the case when $\Ocal_X$ is truncated it is equivalent to having affine stabilizers. 
 
 We may summarize our results in the spectral setting as follows:

\begin{theorem}\label{theorem main spectral}
Let $X$ be a Noetherian spectral algebraic stack with $0$-affine and separated diagonal. Then the following are equivalent:
\begin{enumerate}[\normalfont\indent(1)]
\item $\QCoh(X)$ is dualizable.
\item $X$ is $1$-affine.
\item Products in $\QCoh(X)$ have bounded cohomological amplitude.
\item $X$ is well stabilized.
\end{enumerate}
If the above holds then $\IndCoh(X)$ is compactly generated. Conversely, if $\IndCoh(X)$ is dualizable and $\Ocal_X$ is truncated then the above conditions hold.
\end{theorem}

If we restrict to characteristic zero, the dualizability of $\QCoh(X)$ and the $1$-affineness of $X$ were previously only proven under the additional hypothesis that $\Ocal_X$ is truncated, and theorem \ref{theorem main spectral} removes this requirement. 

It is natural to expect that the equivalence between conditions (1) through (4) in theorem \ref{theorem main spectral} holds without the conditions of Noetherianity and separatedness of the diagonal. Although we do not know this in all generality, we will show that the conditions can indeed be relaxed in some cases. If $X$ is a spectral algebraic stack over a field, or a spectral Deligne-Mumford stack, then the equivalence between conditions (1) through (4) holds without any Noetherianity assumptions. Likewise, one can get positive results in the absence of separatedness of the diagonal by imposing stronger conditions on the stabilizer groups. We refer the reader to section \ref{section proofs} for details. 


\subsection{Acknowledgments}

I am grateful to Alexander Efimov, Dennis Gaitsgory, and Peter Scholze for conversations related to the subject of this paper. This work was carried out at the Max Planck Institute for Mathematics in Bonn, and I am grateful to the institute for its hospitality and support.


\subsection{Conventions and notation}

In the main body of this paper we use the convention where the word category refers to $\infty$-category.  We work throughout the paper in the setting of spectral algebraic geometry. The basic objects are called prestacks, and are by definition accessible functors from the category of connective commutative ring spectra into the category of anima. Corepresentable functors are called affine schemes.

We equip the category of affine schemes with the fpppf topology\footnote{Except in the appendix, where we work with infinite type group schemes, which are better dealt with using the fpqc topology.}, whose covers are generated by finite collections of maps $\Spec(B_i) \rightarrow \Spec(A)$ such that the map $A \rightarrow \prod B_i$ is faithfully flat and almost of finite presentation in the sense of \cite{HA} definition 7.2.4.26. Prestacks which satisfy descent with respect to this topology are called stacks.

The class of higher algebraic stacks is then defined inductively as the smallest class of stacks containing affine schemes and closed under small coproducts and geometric realizations of flat almost finitely presented groupoids. An algebraic stack is a higher algebraic stack whose values on classical affine schemes is $1$-truncated, while an algebraic space is a higher algebraic stack whose values on classical affine schemes is $0$-truncated.

An affine cover of an algebraic stack is a jointly surjective family $p_\alpha : U_\alpha \rightarrow X$ of flat almost finitely presented morphisms, where each $U_\alpha$ is affine. We will for the most part work with quasi-compact algebraic stacks; in this case affine covers often consist of a single map $p: U \rightarrow X$. An algebraic stack is said to be classical (resp. Noetherian) if whenever we take such a cover we have that $\Ocal(U_\alpha)$ is $0$-truncated (resp. Noetherian) for every $\alpha$.

Let $\mathbb{S}$ be the sphere spectrum. For each commutative ring spectrum $R$ we denote by $\Mod_R$ the presentable stable category of $R$-modules. Given an affine scheme $S$, we set $\QCoh(S) = \Mod_{\Ocal(S)}$. In general, if $X$ is a prestack we have a corresponding presentable symmetric monoidal category $\QCoh(X)$ defined via Kan extension from the case of affine schemes. If $S$ is a Noetherian affine scheme, we also have a category $\IndCoh(S)$ which is defined as the ind-completion of the full subcategory of $\QCoh(S)$ on the coherent sheaves (that is those sheaves which have finitely many nonzero homologies, which are all finitely generated). There is a star pullback on $\IndCoh$ along flat morphisms, which one then uses to define $\IndCoh(X)$ for more general Noetherian algebraic stacks by descent.

We say that a morphism of connective commutative ring spectra $R \rightarrow R'$ is of finite presentation to order $n$ if $\tau_{\leq n}(R')$ is compact as an object of $\CAlg((\Mod_R)^\cn_{\leq n})$. In the limit $n \to \infty$ this recovers the notion of almost finite presentation; we will otherwise be interested in this notion mainly in the case $n = 0$ where it recovers, for $0$-truncated $R$ and $R'$, the more classical notion of finite presentation from commutative algebra. In general, almost finite presentation is stronger than finite presentation to order $0$, however these notions turn out to be equivalent whenever $R'$ is flat over $R$, a fact which implies that the notion of classical algebraic stack we work with in this paper agrees with the traditional notion defined in terms of finite presentation to order $0$. The notion of finite presentation to order $n$ is extended to morphisms of algebraic stacks in a familiar fashion, by requiring local finite presentation to order $n$ together with quasi-compactness and quasi-separatedness.

For each connective commutative ring spectrum $R$ we denote by $\mathbb{G}_{a, R}$ and $\mathbb{G}_{m, R}$ the (flat) additive and multiplicative group schemes over $R$. To facilitate manipulations with group schemes, we adopt the following convention: if $G$ is  a group over a base $X$ and we have a map $Y \rightarrow X$ we denote by $G_Y$ the pullback of $G$ to $Y$. If $Y = \Spec(R)$, we will sometimes denote this simply by $G_R$.

Let $\Pr$ be the category of presentable categories and colimit preserving functors, and denote by $\Pr_\st$ the full subcategory of $\Pr$ on the presentable stable categories. We equip $\Pr$ and $\Pr_\st$ with their usual symmetric monoidal structures. If $\Ccal_\alpha$ is a diagram of presentable categories and colimit preserving functors we will denote by $\colim \Ccal_\alpha$ their colimit taken in $\Pr$. For each presentable symmetric monoidal stable category $\Mcal$ we denote by $\Mod_\Mcal$ the corresponding category of modules over $\Mcal$ in $\Pr_\st$. If $\Ccal$ is a Grothendieck abelian category we have a corresponding derived category, which is a presentable stable category which we denote $\D(\Ccal)$.


\ifx\inmain\undefined
\bibliographystyle{myamsalpha2}
\bibliography{References}
\fi


\section{Well stabilized algebraic stacks} \label{section well stabilized}

The central theme of this paper is the idea that a lot of the potential pathological behavior of the category of quasi-coherent sheaves on a qcqs algebraic stack with affine stabilizers $X$ can ultimately be traced back to the single example of the classifying stack of the additive group in positive characteristic.  With this in mind, one says that $X$ is well stabilized if for every positive characteristic field valued point $x: \Spec(k) \rightarrow X$ the stabilizer group of $X$ at $x$ does not contain a copy of $\GG_{a, k}$. The goal of this section is to collect some basic geometric results concerning the class of well stabilized stacks.

We begin in \ref{subsection good groups} with a general study of good algebraic groups, which are those algebraic groups whose geometric fibers do not contain copies of positive characteristic additive groups. As we shall see, while this condition is automatic in characteristic zero, it is very restrictive in positive characteristic: it was essentially shown in \cite{HallNeemanRydh} that if $G$ is a good algebraic group over a positive characteristic field $k$ then, after passing to a finite field extension of $k$, the group $G$ contains a split torus whose quotient is finite. More generally, we prove here that an analogous description holds generically for families of good algebraic groups over an integral base (proposition \ref{proposition structure G}).

We then study well stabilized stacks in \ref{subsection well stabilized stacks}. 
 As remarked above, one of the central goals of the paper is to show that well stabilized stacks enjoy many good categorical properties. There is however one desirable property that is not always satisfied by well stabilized stacks: the structure sheaf is not necessarily compact. As explained in \cite{HallRydhGroups}, this issue is resolved by placing stronger conditions on the stabilizer groups. This leads us to the class of very well stabilized stacks, which have the feature that their stabilizer groups at positive characteristic field valued points are linearly reductive. We close this section with a proof of the fact that, under mild conditions, well stabilized stacks admit quasi-finite flat covers by very well stabilized stacks (theorem \ref{theorem quasi-finite structure}).


\subsection{Good algebraic groups}\label{subsection good groups}

We begin with the following general definition:

\begin{definition}\label{definition poor}
Let $S$ be an affine scheme. An algebraic group over $S$ is a flat almost finitely presented group algebraic space $G$ over $S$. We say that $G$ is poor if there exists a field $k$ of positive characteristic and a morphism $\Spec(k) \rightarrow S$ such that $G_{\Spec(k)}$ contains a subgroup isomorphic to $\mathbb{G}_{a, k}$. Otherwise, we say that $G$ is good.
\end{definition}

Definition \ref{definition poor} is a version in families of the notion of poor algebraic group from \cite{HallNeemanRydh} section 4, and recovers it in the case when $S$ is the spectrum of an algebraically closed field:

\begin{proposition}\label{proposition good over algebraically closed}
Let $k$ be an algebraically closed field and let $G$ be an algebraic group over $k$. The following are equivalent:
\begin{enumerate}[\normalfont \indent (1)]
\item The reduction of the identity component of $G$ is an extension of an abelian variety by a torus.
\item There is no subgroup of $G$ isomorphic to $\mathbb{G}_{a, k}$.
\item For every field extension $F$ of $k$, there is no subgroup of $G_F$ isomorphic to $\mathbb{G}_{a, F}$.
\end{enumerate}
\end{proposition}

The equivalence between (1) and (2) in proposition \ref{proposition good over algebraically closed} features already in \cite{HallNeemanRydh}, so our only new claim is that (2) implies (3). The proof of this will need some preliminaries.

\begin{notation}
Let $S$ be a classical affine scheme. We denote by $\operatorname{AlgStk}^{\cl, \fp}_S$ the category of classical algebraic stacks of finite presentation to order $0$ over $S$, and by $\operatorname{Grp}(\operatorname{AlgStk}^{\cl,\fp}_S)$ the category of group objects in $\operatorname{AlgStk}^{\cl,\fp}_S$. Note that for each morphism of classical affine schemes $S \rightarrow T$ we have functors
\[
\operatorname{AlgStk}^{\cl,\fp}_T \rightarrow \operatorname{AlgStk}^{\cl,\fp}_S
\]
and
\[
\operatorname{Grp}(\operatorname{AlgStk}^{\cl,\fp}_T) \rightarrow \operatorname{Grp}(\operatorname{AlgStk}^{\cl,\fp}_S),
\]
induced by pullback. 
\end{notation}

\begin{lemma}\label{lemma colimit}
Let $S_\alpha$ be a cofiltered system of classical affine schemes, and let $S = \lim S_\alpha$. Then the canonical functors
\[
\colim \operatorname{AlgStk}^{\cl,\fp}_{S_\alpha} \rightarrow \operatorname{AlgStk}^{\cl,\fp}_S
\]
and 
\[
\colim \operatorname{Grp}(\operatorname{AlgStk}^{\cl,\fp}_{S_\alpha}) \rightarrow \operatorname{Grp}(\operatorname{AlgStk}^{\cl,\fp}_S)
\]
are equivalences.
\end{lemma}
\begin{proof}
The first part of the lemma follows directly from \cite{LaumonMoretBailly} proposition 4.18. For the second part of the lemma we note that if $\ccal_\alpha$ is a filtered system of $(1,1)$-categories with finite products and transitions which preserve finite products, then the canonical functor $\colim \operatorname{Grp}(\ccal_\alpha) \rightarrow \operatorname{Grp}(\colim \ccal_\alpha)$ is an equivalence.
\end{proof}

\begin{lemma}\label{lemma embeddings}
Let $S$ be a classical affine scheme, and let $H, G$ be a pair of algebraic groups over $S$. Consider the functor
\[
\operatorname{Emb}_{H, G} : (\Aff^\cl_{/S})^\op \rightarrow \Set
\]
which sends each classical affine scheme $T$ over $S$ to the set of group embeddings $H_T \rightarrow G_T$. Then $\operatorname{Emb}_{H, G}$ preserves filtered colimits.
\end{lemma}
\begin{proof}
Follows from lemma \ref{lemma colimit}, since embeddings $H_T \rightarrow G_T$ are the same as monomorphisms from $H_T$ to $G_T$ in $\operatorname{Grp}(\operatorname{AlgStk}^{\normalfont\text{fp}}_T)$.
\end{proof}

\begin{proof}[Proof of proposition \ref{proposition good over algebraically closed}]
The equivalence between (1) and (2) is \cite{HallNeemanRydh} lemma 4.1. The fact that (3) implies (2) is clear. We finish by showing that (2) implies (3). Assume given a field extension $F$ of $k$ such that $G_F$ contains a copy of $\mathbb{G}_{a, F}$. We need to show that $G$ contains a copy of $\mathbb{G}_{a, k}$. Write $F$ as a filtered colimit of classical finitely generated $k$-algebras $A_\alpha$. Applying lemma \ref{lemma embeddings} we deduce that there exists an index $\alpha$ such that $G_{A_\alpha}$ contains a copy of $\mathbb{G}_{a, A_\alpha}$. Since $k$ is algebraically closed we may choose a retraction $A_\alpha \rightarrow k$. Changing base along this morphism yields the desired embedding $\mathbb{G}_{a, k} \rightarrow G$.
\end{proof}

In this paper we will be interested in good \emph{affine} algebraic groups. Proposition \ref{proposition good over algebraically closed} implies the following useful characterization in the case when the base is an algebraically closed field $k$: either the characteristic of $k$ is zero, or there is an inclusion of a split torus in $G$ with finite quotient. If  $k$ is not algebraically closed a similar characterization holds after passing to a finite field extension.  The following proposition shows that such a description also holds generically over any integral base:

\begin{proposition}\label{proposition structure G}
Let $S$ be an integral affine scheme and let $G$ be an algebraic group over $S$ with affine fibers. If $G$ is good then one of the following two conditions hold:
\begin{enumerate}[\normalfont \indent (a)]
\item There exists a dense open subscheme $U$ of $S$ which lies over $\mathbb{Q}$, an integer $n \geq 0$, and a subgroup inclusion $G \rightarrow \GL_{n, U}$ whose quotient is quasi-affine.
\item There exists a dense open subscheme $U$ of $S$, a finite flat\footnote{Finite flat is taken here to mean that the relative structure sheaf is a vector bundle, see definition \ref{definition finite flat} below. In the Noetherian setting, this agrees with the condition of being finite and flat.} surjection $U' \rightarrow U$, an integer $n \geq 0$, and a subgroup inclusion $\GG^n_{m, U'} \rightarrow G_{U'}$  with finite flat quotient.
\end{enumerate}
\end{proposition}

\begin{proof}[Proof of proposition \ref{proposition structure G}]
Let $K$ be the fraction field of $S$, and fix an algebraic closure $\overline{K}$ of $K$. We will show that conditions (a) or (b) hold depending on whether $G_{\overline{K}}$ contains a copy of $\mathbb{G}_{a, \overline{K}}$ or not.

Assume first that there is a subgroup inclusion  $\mathbb{G}_{a, \overline{K}} \rightarrow G_{\overline{K}}$. Applying lemma \ref{lemma embeddings} we may find a finite field extension $F$ of $K$ such that $G_F$ contains a copy of $\mathbb{G}_{a, F}$. Passing to an affine open subset of $S$ if necessary we may find a finite flat surjection $S' \rightarrow S$ such that $\Spec(K) \times_S S' = \Spec(F)$. By \cite{Totaro} lemma 3.1 there exists a nonnegative integer $n$ and a subgroup inclusion $i_F: G_F \rightarrow \GL_{n, F}$ whose quotient is a  quasi-affine scheme. Another application of lemma \ref{lemma embeddings} implies that there exists a dense open affine subscheme $U$ of $S$ such that, setting $U' = S' \times_S U$, the group $G_{U'}$ contains a copy of $\mathbb{G}_{a, U'}$ and admits an embedding $i_{U'}: G_{U'} \rightarrow \GL_{n, U'}$ whose base change to $\Spec(F)$ recovers $i_F$. The quotient $\GL_{n, U'}/G_{U'}$ is a qcqs algebraic space whose base change along $\Spec(K) \rightarrow U$ is quasi-affine; applying \cite{RydhApprox1} proposition B.3 we may, after shrinking $U$ if necessary, assume that $i_{U'}$ has quasi-affine quotient.

It remains to show that $U$ lies over $\QQ$.  Being a base change of $G$, the group $G_{U'}$ is good. The fact that it contains a copy of the additive group then implies that all the residue fields of $U'$ are of characteristic zero. It follows that all the residue fields of $U$ are of characteristic zero, and the desired assertion follows.

Assume now that there is no subgroup inclusion $\mathbb{G}_{a, \overline{K}} \rightarrow G_{\overline{K}}$. By proposition \ref{proposition good over algebraically closed} we have that the reduction of the identity component of $G_{\overline{K}}$ is a torus. In other words, there is an integer $n$ and a subgroup inclusion $i_{\overline{K}}: \mathbb{G}_{m, \overline{K}}^n \rightarrow G_{\overline{K}}$ whose quotient is finite.

Applying lemma \ref{lemma embeddings} we may find a finite field extension $F$ of $K$ inside $\overline{K}$ and an inclusion $i_{F} : \mathbb{G}_{m, F}^n \rightarrow G_{F}$ whose base change to $\overline{K}$ recovers $i_{\overline{K}}$. Since finite schemes descend along $F \rightarrow \overline{K}$, the quotient of $i_F$ is finite. Passing to an affine open subset of $S$ if necessary we may find a finite flat surjection $S' \rightarrow S$ such that $\Spec(K) \times_S S' = \Spec(F)$. Another application of lemma \ref{lemma embeddings} implies that there exists a dense open affine subscheme $V$ of $S$ such that, setting $V' = S' \times_S V$, there exists a subgroup inclusion $i_{V'}: \mathbb{G}_{m, V'}^n \rightarrow G_{V'}$  whose base change to $\Spec(F)$ recovers $i_F$. The quotient of $i_{V'}$ is a classical algebraic space of finite presentation to order $0$ over $V'$ whose base change to $\Spec(F)$ is finite. It now follows from lemma \ref{lemma colimit} that we may find a dense open subscheme $U$ of $V$ such that the base change of $i_{V'}$ to $U' = V' \times_V U$ has finite flat quotient.
\end{proof}
 
It follows from proposition \ref{proposition structure G} that if $G$ is a good algebraic group with affine fibers over a classical Noetherian scheme $S$, then there exists a finite increasing sequence of open subschemes
\[
\emptyset = U_0 \subseteq U_1 \subseteq \ldots \subseteq U_n = S
\]
such that for every $i$ the base change of $G$ to the reduction of $U_{i+1} \backslash U_i$ satisfies one of conditions (a) or (b). One may wonder whether in fact more is true, and locally on $S$ one of conditions (a) or (b) are verified. This turns out to not be the case:

\begin{example}
Let $G_0 = \Spec(\ZZ[x, y][1/(xy + 1)])$, thought of as an algebraic group over $\Spec(\ZZ[x])$, where the structure map is given by
\[
y \mapsto x (y \otimes 1) (1 \otimes y) + ( y \otimes 1 ) + (1 \otimes y).
\]
Here the base change of $G_0$ to the locus $x = 0$ recovers the group $\GG_{a, \mathbb{Z}}$, while its base change to the locus $x \neq 0$ recovers the group $\GG_{m, \mathbb{Z}[x, x^{-1}]}$.

Let $S \subseteq \Spec(\ZZ[X])$ be the intersection of the locus $x - p \neq 0$ over all prime numbers $p$, and let $G$ be the base change of $G_0$ to $S$. Then $G$ is a good algebraic group over $S$. The locus where $x = 0$ on $S$ consists now of a single closed point $x$, and there does not exist an open neighborhood of $x$ satisfying either of the conditions from proposition \ref{proposition structure G}.
\end{example}


\subsection{Well stabilized and very well stabilized stacks}\label{subsection well stabilized stacks}

For every field valued point $x: \Spec(k) \rightarrow X$ in an algebraic stack one may define the (classical) stabilizer group of $X$ at $x$, which is the group algebraic space $k$ given by the classical truncation of $\Spec(k) \times_{X \times X} X$. If $X$ is quasi-separated, then its stabilizers are in fact algebraic groups.   Equipped with this notion, we arrive at the main definition of this section:

\begin{definition}\label{definition well stabilized}
Let $X$ be a quasi-separated algebraic stack. We say that $X$ is well stabilized if the stabilizer group of $X$ at every field valued point is good. Otherwise we say that $X$ is poorly stabilized.
\end{definition}

\begin{remark}
Let $k \rightarrow K$ be a field extension, and $G$ be an algebraic group over $k$. Then $G$ is good if and only if $G_K$ is good. It follows that if $x: \Spec(k) \rightarrow X$ is a field valued point of a quasi-separated algebraic stack $X$, then the condition that the stabilizer of $X$ at $x$ be good only depends on the induced point in the topological space underlying $X$. In particular, in definition \ref{definition well stabilized} one may restrict attention to stabilizer groups at points valued in algebraically closed fields. This is the form in which definition \ref{definition well stabilized} is formulated in \cite{HallNeemanRydh} section 4.
\end{remark}

Definition \ref{definition well stabilized} specializes to definition \ref{definition poor} in the case of classifying stacks:

\begin{proposition}\label{proposition BG well stabilized iff G good}
Let $S$ be an affine scheme and let $G$ be an algebraic group over $S$. Then $\B G$ is well stabilized if and only if $G$ is good. 
\end{proposition}
\begin{proof}
Assume that $G$ is good, and suppose given a field valued point $x: \Spec(k) \rightarrow \B G$. Let $K$ be a field extension of $k$ such that the composite map $x': \Spec(K) \rightarrow \Spec(k) \rightarrow \B G$ factors through $S$. To show that the stabilizer of $\B G$ at $x$ is good it suffices to show that the stabilizer at $x'$ is good. Indeed, this agrees with $G_K$, which is good since $G$ is good.

Assume now that $\B G$ is well stabilized, and suppose given a field $k$ and a morphism $\Spec(k) \rightarrow S$. Then $G_k$ is the stabilizer of $\B G$ at the composite map $\Spec(k) \rightarrow S \rightarrow \B G$, and consequently it is good. It follows that $G$ is good, as desired.
\end{proof}

We also have the following basic observation:

\begin{proposition}\label{proposition algebraic space over well stabilized}
Let $p: X \rightarrow Y$ be a morphism of quasi-separated algebraic stacks. Assume that $Y$ is well stabilized and that $p$ is representable by algebraic spaces. Then $X$ is well stabilized.
\end{proposition}
\begin{proof}
Let $x: \Spec(k) \rightarrow X$ be a field valued point. Then the stabilizer of $X$ at $x$ embeds inside the stabilizer of $Y$ at $p \circ x$. The result now follows from the fact that a subgroup of a good algebraic group is good.
\end{proof}

We now introduce the particularly nicely-behaved class of very well stabilized stacks.

\begin{definition}
Let $G$ be an algebraic group over an affine scheme $S$, with affine fibers. We say that $G$ is very good if for every algebraically closed field $k$ of positive characteristic and every map $\Spec(k) \rightarrow S$ we have that the connected component of the identity in $G_{\Spec(k)}$ is of multiplicative type and the number of connected components of $G_{\Spec(k)}$ is not divisible by $p$. We say that a quasi-separated algebraic stack $X$ with affine stabilizers is very well stabilized if the stabilizer group of $X$ at every field valued point is very good.
\end{definition}

\begin{remark}\label{remark equivalences very good}
Let $X$ be a Noetherian algebraic stack with affine stabilizers. It was shown in \cite{HallRydhGroups} section 2 that if $X$ is classical then the following conditions are equivalent:
\begin{enumerate}[\normalfont \indent(1)]
\item $\Ocal_X$ is a compact object of $\QCoh(X)$.
\item The functor $\Gamma(X, -): \QCoh(X) \rightarrow \Mod_\SS$ has bounded cohomological amplitude (see definition \ref{definition bounded cohomological amplitude}).
\item $X$ is very well stabilized.
\end{enumerate}
Although we will only need to make use of this in the classical setting, we mention that this equivalence continues to hold when $X$ is possibly spectral. In this case, one shows that conditions (1) and (2) are equivalent, and that they hold for $X$ if and only if they hold for $X_\cl$, see for instance the arguments in \cite{DrinfeldGaitsgory} sections 2.1 and 2.2.
\end{remark}

The following result is a variant of proposition \ref{proposition BG well stabilized iff G good}, and is proven in the same way:

\begin{proposition}
Let $S$ be an affine scheme and let $G$ be an algebraic group over $S$ with affine fibers. Then $\B G$ is very well stabilized if and only if $G$ is very good. 
\end{proposition}

We also have the following variant of proposition \ref{proposition algebraic space over well stabilized}:

\begin{proposition}
Let $p: X \rightarrow Y$ be a morphism of quasi-separated algebraic stacks with affine stabilizers. Assume that $Y$ is very well stabilized and that $p$ is representable by algebraic spaces. Then $X$ is very well stabilized.
\end{proposition}
\begin{proof}
As in the proof of proposition \ref{proposition algebraic space over well stabilized}, the central observation is that if $G$ is a very good algebraic group over a field $k$ and $H$ is a subgroup of $G$, then $G$ is very good. This follows for instance by using characterization (1) in remark \ref{remark equivalences very good}: the structure sheaf of $\B H$ is compact since it is the pullback of $\Ocal_{\B G}$ along the map $q: \B H \rightarrow \B G$, and $q_*$ preserves colimits. 
\end{proof}

The remainder of this section is devoted to the proof of the following result, which shows that well stabilized algebraic stacks often admit quasi-finite flat covers by very well stabilized algebraic stacks.

\begin{theorem}\label{theorem quasi-finite structure}
Let $X$ be a classical Noetherian algebraic stack with affine stabilizers. Assume that $X$ is well stabilized and has separated diagonal. Then there exists a morphism $f: X' \rightarrow X$ which is flat, surjective, quasi-finite, separated, representable by algebraic spaces, and such that $X'$ is very well stabilized.
\end{theorem}

\begin{lemma}\label{lemma Maschke}
Let $k$ be an algebraically closed field, and let $G$ be an affine algebraic group over $k$. If the reduced identity component $G^0_\red$ is a torus and the dimension of $\Ocal(G/G^0_\red)$ is invertible in $k$ then $G$ is very good.
\end{lemma}
\begin{proof}
Let $\Fcal$ be the pushforward of $\Ocal_{\B G^0_\red}$ along the map $p: \B G^0_\red \rightarrow \B G$. Then $\Fcal$ is a vector bundle over $\B G$, and the composition of the unit map $\Ocal_{\B G} \rightarrow \Fcal \otimes \Fcal^\vee$ and the counit map $\Fcal \otimes \Fcal^\vee \rightarrow \Ocal_{\B G}$ is given by multiplication by the dimension of $\Ocal(G/G^0_\red)$. It follows that $\Ocal_{\B G}$ is a retract of $\Fcal \otimes \Fcal^\vee = p_*p^*(\Fcal^\vee)$. Since $G_\red$ is a torus we have that $p^*\Fcal^\vee$ is a projective object of $\QCoh(\B G^0_\red)^\heartsuit$, and since $p_*$ admits a t-exact colimit preserving right adjoint we deduce that $\Fcal \otimes \Fcal^\vee$ is a projective object of $\QCoh(\B G)^\heartsuit$. It now follows that $\Ocal_{\B G}$ is projective as well, which implies by \cite{HallRydhGroups} theorem 1.2 that $G$ is very good.
\end{proof}

\begin{lemma}\label{lemma very good after inverting}
Let $S$ be a classical Noetherian affine scheme and let $G$ be an algebraic group over $S$. Assume that $G$ is good and has affine fibers. Then there exists a dense open subscheme $U$ of $S$ and a  nonzero integer $N$ such that $G_{U \times_{\Spec{\ZZ}} \Spec(\ZZ[1/N])}$ is very good.
\end{lemma}
\begin{proof}
The notions of good and very good algebraic group are insensitive to non-reduced structures, so we may, by replacing $S$ with $S_\red$, assume that $S$ is reduced. It suffices furthermore to prove the lemma for each irreducible component of $S$, so we may assume that $S$ is integral. 

We apply proposition \ref{proposition structure G}. If condition (a) holds then $G_U$ is automatically very good, so we may take $N = 1$. Assume now that condition (b) holds, so that we have an inclusion $\GG^n_{m, U'} \rightarrow G_{U'}$ whose quotient is finite flat.  For each point $x$ in $U'$ let $N_x$ be the dimension of the vector space of functions on the fiber of $G_{U'}/\GG^n_{m, U'}$ at $x$. Note that $N_x$ is locally constant in $U'$, so we may pick a nonzero integer $N$ which is a multiple of $N_x$ for all $x$. We claim that $G_{U \times_{\Spec{\ZZ}} \Spec(\ZZ[1/N])}$ is very good. Since the map $U' \rightarrow U$ is surjective, it is enough to show that $G_{U' \times_{\Spec{\ZZ}} \Spec(\ZZ[1/N])}$ is very good. This follows from lemma \ref{lemma Maschke}.
\end{proof}

\begin{lemma}\label{lemma very well stabilized after inverting}
Let $X$ be a classical Noetherian algebraic stack with affine stabilizers. Assume that $X$ is well stabilized. Then there exists a nonzero integer $N$ such that $X \times_{\Spec(\ZZ)} \Spec(\ZZ[1/N])$ is very well stabilized.
\end{lemma}
\begin{proof}
The condition of (very) well stabilization only depends on the reduced stack underlying a given stack, so we may by replacing $X$ with $X_\red$ assume that $X$ is reduced. We argue by Noetherian induction. The case when $X$ is empty is clear, so assume that $X$ is nonempty, and that the lemma is known for every proper reduced closed substack of $X$.

Using \cite{stacks-project} proposition 06RC we may find a dense open substack $Y$ of $X$ which is a gerbe over an algebraic space $S$. Let $S' \rightarrow S$ be an affine cover such that $Y$ has a section over $S'$. Then we may write $Y' \times_{S} S' = \B G$ for some algebraic group $G$ over $S'$ with affine fibers.  Applying proposition \ref{proposition algebraic space over well stabilized} we see that $\B G$ is well stabilized, and hence $G$ is good (proposition \ref{proposition BG well stabilized iff G good}). By lemma \ref{lemma very good after inverting} there exists a dense open subscheme $U'$ of $S'$ and a nonzero integer $N_Y$ with the property that $G_{U' \times_{\Spec(\ZZ)} \Spec(\ZZ[1/N_Y])}$ is very good. The image of $U'$ in $S$ then provides a dense open substack $U$ of $S$ with the property that $Y \times_S U \times_{\Spec(\ZZ)} \Spec(\ZZ[1/N_Y])$ is very well stabilized. Replacing $Y$ with $Y \times_S U$ we may now assume that $Y \times_{\Spec(\ZZ)} \Spec(\ZZ[1/N_Y])$ is very well stabilized. 

Let $Z$ be the reduced complement of $Y$. Then our inductive hypothesis implies that there exists a nonzero integer $N_Z$ with the property that $Z \times_{\Spec(\ZZ)} \Spec(\ZZ[1/N_Z])$ is very well stabilized. The lemma now follows by setting $N = N_Z N_Y$.
\end{proof}

\begin{lemma}\label{lemma cover at closed point}
Let $X$ be a classical Noetherian algebraic stack with affine stabilizers and separated diagonal. Let $x$ be a closed point of $X$ of positive characteristic and assume that the stabilizer of $X$ at $x$ is good. Then there exists a morphism $f: W \rightarrow X$ which is flat, quasi-finite, separated, and representable by algebraic spaces, with $x$ contained in the image of $f$, and such that $W$ is very well stabilized.
\end{lemma}
\begin{proof}
Let $Z$ be the residual gerbe of $X$ at $x$, so that $Z$ is a reduced closed substack of $X$ which only contains the point $x$ (\cite{stacks-project} lemma 0509). By \cite{stacks-project} lemma 06QK we have that $Z$ is a gerbe over a reduced algebraic space with a single point. Applying \cite{stacks-project} proposition 06NH we deduce that $Z$ is in fact a gerbe over the spectrum of a field $k$. Let $K$ be a finite field extension of $k$ with the property that $Z$ has a section over $\Spec(K)$. Then $Z \times_{\Spec(k)} \Spec(K) = \B G$ for some algebraic group $G$ over $K$. We have that $G$ is the stabilizer of $X$ at $\Spec(K)$ and hence it is affine and good. After replacing $K$ with a further finite field extension we may assume that there is a subgroup inclusion $\GG^n_{m, K} \rightarrow G$ whose quotient is finite. Let $W_0 = \B \GG^n_{m, K}$, and observe that the composite $W_0 \rightarrow \B G \rightarrow Z$ is finite flat. We claim that this map is furthermore syntomic. To see this, note that the projection $W_0 \rightarrow \B G$ is syntomic since its fiber $G/\GG^n_{m, K}$ is syntomic by \cite{AlperHallHalpernLeistnerRydh} lemma 5.2, while the projection $\B G \rightarrow Z$ is syntomic since it is a base change of the map $\Spec(K) \rightarrow \Spec(k)$.

An application of \cite{AlperHallHalpernLeistnerRydh} theorem 5.1(b) provides a syntomic morphism $f: \mathcal{W} \rightarrow X$ whose base change to $Z$ recovers $W_0$, and such that $\mathcal{W}$ is fundamental (that is, it admits an affine morphism to the classifying stack of a general linear group over the integers). Applying \cite{AlperHallRydhstructure} proposition 6.8 we may, after passing to an \'etale neighborhood of $W_0$, assume that $\mathcal{W}$ is very well stabilized. An application of \cite{AlperHallRydhstructure} proposition 5.3 now shows that, after shrinking $\mathcal{W}$ if necessary, we may arrange that $f$ be separated and representable by algebraic spaces. We now finish by letting $W$ be the locus where $f$ is quasi-finite (which is an open substack of $\mathcal{W}$ containing $W_0$).
\end{proof}

\begin{proof}[Proof of theorem \ref{theorem quasi-finite structure}]
Let $N$ be as in lemma \ref{lemma very well stabilized after inverting}, and for each closed point of positive characteristic $x$ in $X$ let $f_x: W_x \rightarrow X$ be as in lemma \ref{lemma cover at closed point}. By quasi-compactness of $X$ we may pick a finite set of closed points $x_i$ such that the maps $f_{x_i}$ together with the inclusion $X \times_{\Spec(\ZZ)} \Spec(\ZZ[1/N]) \rightarrow X$ cover $X$. We now finish the proof by setting $X'$ to be the disjoint union of $\coprod W_{x_i}$ and $X \times_{\Spec(\ZZ)} \Spec(\ZZ[1/N]) $.
\end{proof}


\ifx\inmain\undefined
\bibliographystyle{myamsalpha2}
\bibliography{References}
\fi


\section{Sheaves of categories, \texorpdfstring{$1$}{1}-affineness and codescent} \label{section 1affineness}

We are in this paper faced with the task of showing that a qcqs algebraic stack $X$ enjoys good categorical properties, from information about the stabilizer groups of $X$. There is a basic roadmap for accomplishing something of this nature in the Noetherian setting:
\begin{enumerate}[\normalfont \indent(1)]
\item One deduces the desired assertion when $X$ is the classifying stack of an algebraic group over a classical base: here one can make direct contact with stabilizer assumptions.
\item Point (1) is then extended to the case when $X$ is a classical gerbe.
\item The case when $X$ is classical is then handled by using stratifications by gerbes.
\item Finally, one deduces the case of possibly spectral $X$ by reducing to the case of $X_{\cl}$.
\end{enumerate}

The basic categorical property we are interested in is dualizability of $\QCoh(X)$. It turns out however that dualizability is somewhat too weak of a condition and does not  immediately lend itself to performing the manipulations needed to carry out the above plan. 

To get around this issue, we will work in this paper with stronger conditions which are amenable to carrying out the above strategy, and which themselves imply dualizability. These conditions will be condensed in section \ref{section controlled} in the notion of a controlled algebraic stack. As we shall see, the property of being controlled is robust enough to allow for most of the manipulations needed in the above roadmap, and implies dualizability as well as a number of other pleasant properties.

The condition of being controlled is closely related to the notion of $1$-affineness introduced by Gaitsgory \cite{G}, which is perhaps the most fundamental concept in the theory of sheaves of categories. Several of the pleasant categorical properties of controlled algebraic stacks, in particular dualizability, are in fact a consequence of the fact that controlled algebraic stacks are $1$-affine. 

The goal of this section is to make a general study of $1$-affineness. We begin in  \ref{subsection dualizability and 1 aff} with general background concerning the theory of sheaves of categories and the notion of $1$-affineness. We prove here the fundamental fact that $1$-affineness of $X$ guarantees dualizability of $\QCoh(X)$ (proposition \ref{proposition 1affine implies dualizable})  and, more generally, dualizability of twisted forms of $\QCoh(X)$ (corollaries \ref{corollary twisted 1} and \ref{corollary twisted 2}).

The notion of $1$-affineness is concerned with an equivalence between the category $\Mod_{\QCoh(X)}$ of module categories over $\QCoh(X)$ and the category $\twoQCoh(X)$ of sheaves of categories on $X$. In \ref{subsection 0aff} we study the condition of $0$-affineness of the diagonal, which guarantees that the global section functors $\twoQCoh(X) \rightarrow \Mod_{\QCoh(X)}$ is fully faithful (proposition \ref{proposition gammaenh fully faithful}). Although $0$-affineness of the diagonal is defined in categorical terms, it admits a pleasant geometric characterization when $\Ocal_X$ is truncated: it is equivalent to the condition that the stabilizers of $X$ be affine (proposition \ref{proposition characterize 0affine}).

Finally, in \ref{subsection Bar} we show that, in the presence of $0$-affine diagonal, $1$-affineness is equivalent to a codescent property for quasi-coherent sheaves: $\QCoh(X)$ is the \emph{colimit} of $\QCoh(S)$ over all affine schemes $S$ equipped with a map to $X$, with transitions given by pushforward functors. This condition is eventually turned into a concrete criterion for checking $1$-affineness in proposition \ref{proposition check affineness via Bar}, which will ultimately form the basis of our proof that controlled algebraic stacks are $1$-affine.


\subsection{Dualizability and \texorpdfstring{$1$}{1}-affineness} \label{subsection dualizability and 1 aff}

We begin with some generalities concerning the theory of sheaves of categories.

\begin{notation}
For each affine scheme $S$ we set
\[
\twoQCoh(S) = \Mod_{\QCoh(S)}.
\]
Given a morphism $f: S \rightarrow T$ between affine schemes, we have a functor
\[
f^*: \twoQCoh(T) \rightarrow \twoQCoh(S)
\]
obtained by extension of scalars along the symmetric monoidal functor $f^* : \QCoh(T) \rightarrow \QCoh(S)$. In this way, the assignment $S \mapsto \twoQCoh(S)$ gives rise to a functor from the opposite of the category of affine schemes to the category of large categories.

The above definition extends to prestacks via right Kan extension. Concretely, for each prestack $X$ we set
\[
\twoQCoh(X) = \lim \Mod_{\QCoh(S)}
\]
where the limit is taken over the category of affine schemes $S$ equipped with a map to $X$. Note that $\twoQCoh(X)$ admits a structure of symmetric monoidal category induced from the canonical symmetric monoidal structures on $\Mod_{\QCoh(S)}$.
\end{notation}

Objects in $\twoQCoh(X)$ are called sheaves of (presentable stable) categories on $X$. This notion has good descent properties:

\begin{theorem}[\cite{G} theorem 1.5.7, \cite{SAG} theorem D.3.6.2\footnote{In light of \cite{SAG} proposition D.3.3.1. This descent result is also essentially contained in \cite{Mathew} proposition 3.45.}]\label{theorem descent twoqcoh}
The assignment $S \mapsto \twoQCoh(S)$ is a sheaf for the fpppf topology.
\end{theorem}

In addition to $\twoQCoh$, the assignment $S \mapsto \Mod_{\QCoh(S)}$ admits another extension to prestacks, namely the functor that sends each prestack $X$ to $\Mod_{\QCoh(X)}$. By the universal property of right Kan extensions we obtain for each $X$ a comparison functor
\[
\Loc_X: \Mod_{\QCoh(X)} \rightarrow \twoQCoh(X).
\]

\begin{definition}\label{definition 1 affineness}
We say that a prestack $X$ is $1$-affine if the functor $\Loc_X$ is an equivalence.
\end{definition}

\begin{example}\label{example alg space}
Let $X$ be a quasi-compact quasi-separated algebraic space. Then $X$ is $1$-affine, see  \cite{G} theorem 2.1.1 or  \cite{SAG} theorem 10.2.0.2.
\end{example}

The following result explains the relevance of definition \ref{definition 1 affineness} to questions of dualizability:

\begin{proposition}\label{proposition 1affine implies dualizable}
Let $X$ be a prestack. If $\Loc_X$ is fully faithful then $\QCoh(X)$ is dualizable.
\end{proposition}
\begin{proof}
We will show that the functor
\[
- \otimes \QCoh(X) : \Pr_{\st} \rightarrow \Pr_{\st}
\]
admits a $\Pr_{\st}$-linear left adjoint. Since $\Pr_{\st}$ is generated under colimit by dualizable objects, $\Pr_{\st}$-linearity of left adjoints is automatic. Hence it suffices to show that for every presentable stable category $\ccal$ the copresheaf 
\[
\Hom_{\Pr_{\st}}(\ccal, - \otimes \QCoh(X))
\]
 is corepresented.

Write $X$ as the colimit of a small diagram of affine schemes $S_\alpha$. By assumption, we have that the functor
\[
\Mod_{\QCoh(X)}  \rightarrow \lim \Mod_{\QCoh(S_\alpha)}
\]
is fully faithful, and in particular for every $\QCoh(X)$-module $\dcal$ we have an equivalence
\[
\dcal = \lim \dcal \otimes_{\QCoh(X)} \QCoh(S_\alpha).
\]
Specializing to free modules $\dcal = \ecal \otimes \QCoh(X)$ we deduce
\[
\ecal \otimes \QCoh(X) = \lim \ecal \otimes \QCoh(S_\alpha).
\]
The above equivalence is functorial in $\ecal$, so it defines an equivalence
\[
- \otimes \QCoh(X) = \lim - \otimes \QCoh(S_\alpha)
\]
of endofunctors of $\Pr_\st$. Our task is now reduced to showing that the copresheaf
\[
\lim \Hom_{\Pr_\st}(\ccal, - \otimes \QCoh(S_\alpha))
\]
is corepresented. Since $\Pr_{\st}$ admits small colimits it suffices to show that for every $\alpha$ the copresheaf $\Hom_{\Pr_\st}(\ccal, - \otimes \QCoh(S_\alpha))$ is corepresented. This follows from the fact that $\QCoh(S_{\alpha})$ is dualizable.
\end{proof}

Proposition \ref{proposition 1affine implies dualizable} admits a variant which applies to categories of sections of dualizable sheaves of categories.

\begin{notation}
Let $X$ be a prestack. We denote by
\[
\Gamma(X, -) : \twoQCoh(X) \rightarrow \Pr_\st
\]
the functor right adjoint to the pullback functor $\Pr_\st = \twoQCoh(\Spec(\SS)) \rightarrow \twoQCoh(X)$.
\end{notation}

\begin{remark}
Let $X$ be a prestack. Then $\Gamma(X, -)$ admits a lift to a functor
\[
\Gamma^\enh(X, -): \twoQCoh(X) \rightarrow \Mod_{\QCoh(X)}
\]
which is right adjoint to $\Loc_X$. Suppose that $X$ is written as the colimit of a small diagram of affine schemes $S_\alpha$, so that each object $\ccal$ in $\twoQCoh(X)$ corresponds to a compatible family of $\QCoh(S_\alpha)$-modules $\ccal_\alpha$. Then we have
\[
\Gamma^\enh(X, \ccal) = \lim \ccal_\alpha
\]
where here each $\ccal_\alpha$ is seen as a module over $\QCoh(X)$ by restriction of scalars along $\QCoh(X) \rightarrow \QCoh(S_\alpha)$.
\end{remark}

\begin{corollary}\label{corollary twisted 1}
Let $X$ be a $1$-affine prestack, and let $\Ccal$ be a dualizable object of $\twoQCoh(X)$. Then $\Gamma(X, \ccal)$ is dualizable.
\end{corollary}
\begin{proof}
By proposition \ref{proposition 1affine implies dualizable} we have that $\QCoh(X)$ is proper as an algebra in $\Pr_\st$, and consequently it will suffice to show that $\Gamma(X, \ccal)$ is dualizable as a $\QCoh(X)$-module. In other words, we wish to prove that $\Gamma^\enh(X, \ccal)$ is a dualizable object of $\Mod_{\QCoh(X)}$. This follows from the fact that $\ccal$ is dualizable, since $X$ is $1$-affine.
\end{proof}

\begin{corollary}\label{corollary twisted 2}
Let $X$ be a $1$-affine prestack, and let $\mathcal{G}$ be a $\GL_1$-gerbe on $X$. Then the category $\QCoh(X, \mathcal{G})$ of quasi-coherent sheaves on $X$ twisted by $\mathcal{G}$ is dualizable.
\end{corollary}


\subsection{Affine stabilizers and \texorpdfstring{$0$}{0}-affineness of the diagonal} \label{subsection 0aff}

Our next goal is to explain a condition which guarantees that the functor $\Gamma^\enh(X, -)$ right adjoint to $\Loc_X$ is fully faithful.

\begin{definition}
Let $X$ be a prestack, and denote by $\Ocal(X)$ its commutative ring spectrum of functions; in other words, the limit in spectra of $\Ocal(S)$ over all maps $S \rightarrow X$ from an affine scheme. We say that  $X$ is $0$-affine if the canonical functor $\Mod_{\Ocal(X)} \rightarrow \QCoh(X)$ is an equivalence. We say that a morphism of prestacks $X \rightarrow Y$ is $0$-affine if for every affine scheme $S$ and every map $S \rightarrow Y$ we have that $X \times_Y S$ is $0$-affine.
\end{definition}

\begin{example}
Quasi-affine morphisms of prestacks are $0$-affine.
\end{example}

\begin{remark}
Let $f: X \rightarrow Y$ be a morphism of prestacks, and assume that $Y$ is an affine scheme. Then $f$ is $0$-affine if and only if $X$ is $0$-affine. Both conditions are equivalent to the assertion that $\Ocal_X$ is a compact generator of $\QCoh(X)$.
\end{remark}

\begin{warning}
It is not the case that $0$-affineness implies $1$-affineness: for instance, the prestack $\B^4 \GG_{a, \QQ}$ is $0$-affine but not $1$-affine.
\end{warning}

\begin{proposition}\label{proposition properties 0affine}
Let $f: X \rightarrow Y$ be a $0$-affine morphism of prestacks. Then:
\begin{enumerate}[\normalfont \indent (i)]
\item The canonical functor $\Mod_{f_* \Ocal_X}(\QCoh(Y)) \rightarrow \QCoh(X)$ is an equivalence.
\item $\QCoh(X)$ is dualizable as a $\QCoh(Y)$-module.
\item For every cartesian square of prestacks
\[
\begin{tikzcd}
X' \arrow{r}{f'} \arrow{d}{g'} & Y'\arrow{d}{g} \\ X \arrow{r}{f} & Y
\end{tikzcd}
\]
the commutative square of categories
\[
\begin{tikzcd}
\QCoh(X') & \arrow{l}[swap]{f'^*} \QCoh(Y') \\ \QCoh(X) \arrow{u}[swap]{g'^*} & \QCoh(Y) \arrow{u}[swap]{g^*} \arrow{l}[swap]{f^*}
\end{tikzcd}
\]
is horizontally right adjointable.
\item In the setting of \normalfont(iii), the canonical functor \[\QCoh(X) \otimes_{\QCoh(Y)} \QCoh(Y') \rightarrow \QCoh(X')\] is an equivalence.
\end{enumerate} 
\end{proposition}
\begin{proof}
Item (i) reduces to the case when $Y$ is affine, and in this situation the desired property is equivalent to the $0$-affineness of $X$. Item (ii) is a direct consequence of (i). Item (iii) reduces once again to the case when $Y$ and $Y'$ are affine schemes, in which case the resulting commutative square of categories is obtained by tensoring the functors $g^*$ and $f^*$ over $\QCoh(Y)$, and the desired conclusion follows from the fact that $f^*$ has a colimit preserving right adjoint. Finally, applying (i) and (iii) we have
\begin{align*}
\QCoh(X) \otimes_{\QCoh(Y)} \QCoh(Y') &= \Mod_{f_* \Ocal_X}(\QCoh(Y)) \otimes_{\QCoh(Y)} \QCoh(Y') \\ &=  \Mod_{g^*f_* \Ocal_X}(\QCoh(Y')) \\ &= \Mod_{f'_*\Ocal_{X'}}(\QCoh(Y')) \\ &= \QCoh(X')
\end{align*}
which proves part (iv).
\end{proof}

\begin{proposition}
Let $f: X \rightarrow Y$ and $g: Y \rightarrow Z$ be morphisms of prestacks. If $f$ and $g$ are $0$-affine then $g \circ f$ is $0$-affine.
\end{proposition}
\begin{proof}
The assertion reduces to the case when $Z$ is affine, so that $Y$ is $0$-affine.  Then it follows from item (i) in proposition \ref{proposition properties 0affine} that $f_*$ is conservative and colimit preserving, hence $\Ocal_X = f^*\Ocal_Y$ is a compact generator of $\QCoh(X)$ and $X$ is $0$-affine.
\end{proof}

\begin{warning}
There exist a morphism $f: X \rightarrow Y$ between $0$-affine prestacks which itself is not $0$-affine. For instance, this is the case for the projection $\Spec(\QQ) \rightarrow \B^4 \GG_{a, \QQ}$.
\end{warning}

\begin{proposition}\label{proposition gammaenh fully faithful}
Let $X$ be a prestack.
\begin{enumerate}[\normalfont \indent (1)]
\item If $X$ has $0$-affine diagonal then the functor $\Gamma^\enh(X, -)$ is fully faithful. 
\item If the diagonal of $X$ is representable by qcqs algebraic spaces and $\Gamma^\enh(X, -)$ is conservative then $X$ has $0$-affine diagonal.
\end{enumerate}
\end{proposition}
\begin{proof}
We begin by establishing (1). We will prove that $\Gamma^\enh(X, -)$ is fully faithful by showing that the counit of the adjunction $\Loc_X \dashv \Gamma^\enh(X, -)$ is an equivalence. Write $X$ as a colimit of a small diagram of affine schemes $S_\alpha$ so that objects in $\twoQCoh(X)$ correspond to  compatible families of $\QCoh(S_\alpha)$-modules $\ccal_\alpha$. Unwinding the definitions, we have to check that given such a family then for every index $\beta$ the canonical map
\[
\QCoh(S_\beta) \otimes_{\QCoh(X)} \lim \ccal_\alpha \rightarrow \ccal_\beta
\]
is an isomorphism.  Applying proposition \ref{proposition properties 0affine} we see that the left hand side in the above equation equals
\[
\lim \QCoh(S_\beta) \otimes_{\QCoh(X)} \ccal_\alpha = \lim \QCoh(S_\beta \times_X S_\alpha) \otimes_{\QCoh(S_\alpha)} \ccal_\alpha.
\]
The diagram involved in the above limit defines an object in $\lim \Mod_{\QCoh(S_\beta \times_X S_\alpha)}$, which by passing to limits is equivalent to $\Mod_{\QCoh(S_\beta)}$. It follows that we have an equivalence
\[
\QCoh(S_\beta \times_X S_\beta) \otimes_{\QCoh(S_\beta)} (\QCoh(S_\beta) \otimes_{\QCoh(X)} \lim \ccal_\alpha) = \QCoh(S_\beta \times_X S_\beta) \otimes_{\QCoh(S_\beta)} \ccal_\beta
\]
and a further base change along the diagonal $S_\beta \rightarrow S_\beta \times_X S_\beta$ proves the desired identity.

It now remains to establish (2). Fix a map $f: T \rightarrow X$ from an affine scheme $T$. We will show that $T$ is $0$-affine. Consider the sheaf of categories $f_*\QCoh_T$ on $X$ whose value on each affine point $S \rightarrow X$ is given by $\QCoh(S \times_X T)$. This admits a subsheaf $\ccal$ whose value on $S \rightarrow X$ is given by the full subcategory of $\QCoh(S \times_X T)$ generated under colimits by the unit. We have that $\Gamma(X ,\ccal)$ is a full subcategory of $\Gamma(X, f_*\QCoh_T) = \QCoh(T)$ which is closed under colimits and contains the unit. It follows that $\Gamma(X ,\ccal) = \Gamma(X, f_*\QCoh_T)$, and consequently $\ccal = \QCoh_T$. Hence $\QCoh(S \times_X T)$ is generated under colimits by the unit for every $S \rightarrow T$. Since $S \times_X T$ is a qcqs algebraic space we see that $\Ocal_{S \times_X T}$ is in fact a compact generator of $\QCoh(S \times_X T)$, which implies that $S \times_X T$ is $0$-affine.
\end{proof}

In the setting of qcqs algebraic stacks, the condition of $0$-affineness of the diagonal is closely related to the condition of having affine stabilizers:

\begin{proposition}\label{proposition characterize 0affine}
Let $X$ be a qcqs algebraic stack. If the diagonal of $X$ is $0$-affine then $X$ has affine stabilizers. The converse holds provided that $\Ocal_X$ is truncated.
\end{proposition}

\begin{proof}[Proof of proposition \ref{proposition characterize 0affine}]
Suppose first that the diagonal of $X$ is $0$-affine. Let $\Spec(k) \rightarrow X$ be a geometric point, and let $G$ be the corresponding stabilizer algebraic group. Then $G$ is the classical truncation of a base change of the diagonal of $X$. In particular, it admits an affine morphism to a $0$-affine prestack, and hence $G$ itself is $0$-affine. We wish to show that $G$ is in fact affine. 

Recall that we have an exact sequence of algebraic groups over $k$
\[
1 \rightarrow H \rightarrow G \rightarrow G_{\text{aff}} \rightarrow 1
\]
where $G_{\text{aff}}$ is the affinization of $G$. Here $H$ is anti-affine, so that $\tau_{\geq 0}(\Ocal(H)) = k$. Our task is to show that $H$ is trivial.

Let $e: \Spec(k) \rightarrow H$ be the unit. Applying \cite{AntieauStefanich} proposition 4.8 we see that the functor
\[
k \otimes_{\Ocal(H)} - : \Mod_{\Ocal(H)} \rightarrow \Mod_k
\]
is conservative on the full subcategory of $\Mod_{\Ocal(H)}$ on the coconnective modules. Since $H$ is a closed subgroup in $G$ we have that $H$ is $0$-affine, and consequently the functor
\[
e^* : \QCoh(H) \rightarrow \QCoh(\Spec(k))
\]
is conservative when restricted to $\QCoh(H)_{\leq 0}$. In particular, it is conservative on $\QCoh(H)^\heartsuit$, which implies that $e$ is surjective and hence $H$ is trivial, as desired.

Suppose now that $X$ has affine stabilizers and $\Ocal_X$ is truncated. Let $p: U \rightarrow X$ be an affine cover, and let $U_1 = U \times_X U$. We wish to show that $U_1$ is $0$-affine. Equivalently, we wish to show that the projection $p':U_1 \rightarrow U$ to the first factor is $0$-affine. Since $U_1$ is a qcqs algebraic space, we have that $p'_*$ preserves colimits, so we may reduce to showing that $p'_*$ is conservative.

Let $i: U_\cl \rightarrow U$ and $i': (U_1)_\cl \rightarrow U_1$ be the inclusions. To show that $p'_*$ is conservative it suffices to prove that $i^*p'_* = (p'_\cl)_* i'^*$ is conservative.  The fact that $\Ocal_X$ (and hence $U_1$) is truncated implies that $i'^*$ is conservative, so we may reduce to showing that $(p'_\cl)_*$ is conservative. In this way we may, by replacing $X$ with $X_\cl$, reduce to the case when $X$ is classical. Combining \cite{RydhApproximation} theorem A with \cite{HallRydhGroups} theorem 2.8 we may further assume that $X$ is Noetherian.

To prove that $p'_*$ is conservative it now suffices to show that the collection of functors $x^*p'_*$ is conservative, where $x: \Spec(k) \rightarrow U$ ranges over all field valued points of $U$. Applying \cite{SAG} lemma 2.6.1.3 we see that the family of pullback functors 
\[
\QCoh(U_1) \rightarrow \QCoh(\Spec(k) \times_U U_1)
\]
 is conservative. We may thus reduce to showing that the functor of global sections on $\Spec(k)\times_U U_1$ is conservative. In other words, we must show that the fibers of $p$ over field valued points are $0$-affine. Equivalently, this amounts to showing that every field valued point $x: \Spec(k) \rightarrow X$ is $0$-affine. We claim that this is in fact quasi-affine.
 
  Applying \cite{stacks-project} lemma 06RF we may factor $x$ through a reduced locally closed substack $X'$ of $X$ such that $X'$ is a gerbe over a qcqs algebraic space. The inclusion $X' \rightarrow X$ is quasi-affine, so we may reduce to showing that the resulting map $\Spec(k) \rightarrow X'$ is quasi-affine. Replacing $X$ by $X'$ we may now assume that $X$ is a gerbe over a (classical) qcqs algebraic space $S$. We claim that in this case $x$ is affine.
  
  We may factor $x$ as the composition 
  \[
  \Spec(k) \rightarrow \Spec(k) \times_S X \rightarrow X.
  \]
  Here the second map is affine, so it suffices to show that the first map is affine. In other words, we have reduced to the case when $X$ is a gerbe over $\Spec(k)$. After a field extension we may identify $X$ with the classifying stack of an affine algebraic group. It follows that $X$ has affine diagonal, and the desired assertion follows.
\end{proof}


\subsection{Criteria for \texorpdfstring{$1$}{1}-affineness} \label{subsection Bar}

We saw above that $\Loc_X$ has a fully faithful right adjoint whenever the diagonal of $X$ is $0$-affine. Our next goal is to present an extra condition which guarantees that $\Loc_X$ is an equivalence.

\begin{notation}\label{notation qcohco}
Let $X$ be a prestack. We set
\[
\QCoh_\co(X) = \colim \QCoh(S)
\]
where the right hand side is a colimit in $\Pr_\st$, indexed over the category of affine schemes over $X$. Here for every morphism $S \rightarrow T$ of affine schemes over $X$ the corresponding transition functor is given by the pushforward functor $\QCoh(S) \rightarrow \QCoh(T)$.

We denote by
\[
\nu_X : \QCoh_\co(X) \rightarrow \QCoh(X)
\]
the functor assembled from the collection of pushforward functors  $\QCoh(S) \rightarrow \QCoh(X)$.
\end{notation}

\begin{remark}
The assignment $X \mapsto \QCoh_\co(X)$ forms part of a functor from the category of prestacks into $\Pr_\st$, obtained by left Kan extension of the functor on affine schemes which sends each affine scheme $S$ to $\QCoh(S)$, and each morphism of affine schemes $S \rightarrow T$ to the pushforward functor $\QCoh(S) \rightarrow \QCoh(T)$. 
\end{remark}

\begin{remark}\label{remark base change nu}
Let $X$ be a prestack. Then $\nu_X$ has a canonical structure of morphism of $\QCoh(X)$-modules. Suppose now given a morphism of prestacks $f: Y \rightarrow X$, and assume that $X$ has $0$-affine diagonal. Then  $\nu_X \otimes_{\QCoh(X)} \QCoh(Y) = \nu_Y$.
\end{remark}

\begin{proposition}
The assignment $X \mapsto \QCoh_\co(X)$ is a cosheaf for the fpppf topology.
\end{proposition}
\begin{proof}
Note first that if $S$ and $T$ are a pair of affine schemes, then the induced map $\QCoh_{\co}(S) \amalg \QCoh_{\co}(T) \rightarrow \QCoh_\co(S \amalg T)$ is an equivalence. Furthermore, if $S$ is empty then $\QCoh_{\co}(S) = 0$ is the initial object in $\Pr_\st$. To check the cosheaf property it suffices then to show that for every affine cover of affine schemes $S \rightarrow T$ with \v{C}ech nerve $S_\bullet$ we have $|\QCoh_\co(S_\bullet)| = \QCoh_\co(T)$. Both sides of this equation are $\QCoh(T)$-modules, so by theorem \ref{theorem descent twoqcoh} it suffices to check that the equation holds after tensoring with $\QCoh(S)$. In this case the desired assertion follows from the fact that the \v{C}ech nerve of the projection $S \times_T S \rightarrow S$ is split.
\end{proof}

\begin{proposition}\label{proposition equivalences 1affine}
Let $X$ be a prestack with $0$-affine diagonal. Then the following are equivalent:
\begin{enumerate}[\normalfont \indent (a)]
\item $X$ is $1$-affine.
\item The functor $\nu_X : \QCoh_\co(X) \rightarrow \QCoh(X)$ is an equivalence.
\item The right adjoint to the functor $\nu_X : \QCoh_\co(X) \rightarrow \QCoh(X)$ is fully faithful.
\end{enumerate}
\end{proposition}

\begin{proof}
We begin by showing that (a) implies (b). Let $S \rightarrow X$ be a map from an affine scheme $S$. Then $\nu_X \otimes_{\QCoh(X)} \QCoh(S)$ is equivalent to $\nu_S$ (remark \ref{remark base change nu}), which is an equivalence since $S$ is affine. The fact that $X$ is $1$-affine implies that the functor $- \otimes_{\QCoh(X)} \QCoh(S)$ is equivalent to the pullback functor $\twoQCoh(X) \rightarrow \twoQCoh(S)$. These form a conservative family as we vary $S$, and consequently $\nu_X$ is an equivalence, as desired.

Next we prove that (b) implies (a). Since $\Gamma^\enh(X, -)$ is fully faithful (proposition \ref{proposition gammaenh fully faithful}), we only need to argue that $\Loc_X$ is conservative. Suppose that $f: \Ccal \rightarrow \Dcal$ is a morphism of $\QCoh(X)$-modules whose image in $\twoQCoh(X)$ is invertible; we wish to show that $f$ is invertible. For every morphism $S \rightarrow X$ from an affine scheme $S$ we have that $f \otimes_{\QCoh(X)} \QCoh(S)$ is invertible. Taking the colimit over all such $S$ we deduce that $f \otimes_{\QCoh(X)} \QCoh_\co(X)$ is invertible. Since $\nu_X$ is an equivalence we deduce that $f$ itself is invertible, as desired.

Condition (b) clearly implies (c). We finish by showing that (c) implies (b). Let $K$ be the kernel of $\nu_X$, so that we have an exact sequence of $\QCoh(X)$-modules
\[
0 \rightarrow K \rightarrow \QCoh_\co(X) \rightarrow \QCoh(X) \rightarrow 0.
\]
Our goal is to show that $K = 0$.

For each morphism $S \rightarrow X$ from an affine scheme $S$ we may tensor the above sequence with $\QCoh(S)$ to obtain an exact sequence
\[
0 \rightarrow K \otimes_{\QCoh(X)} \QCoh(S) \rightarrow \QCoh_\co(X) \otimes_{\QCoh(X)} \QCoh(S) \rightarrow \QCoh(S) \rightarrow 0.
\]
Here the second map is given by $\nu_X \otimes_{\QCoh(X)} \QCoh(S) = \nu_S$, which is an equivalence since $S$ is affine. Consequently $K \otimes_{\QCoh(X)} \QCoh(S) = 0$, and taking the colimit over all $S$ we deduce that $K \otimes_{\QCoh(X)} \QCoh_{\co}(X) = 0$.

 Tensoring our original sequence with $K$ we obtain an exact sequence
\[
K \otimes_{\QCoh(X)} K \rightarrow K \otimes_{\QCoh(X)} \QCoh_\co(X) \rightarrow K \rightarrow 0.
\]
Hence $K$ is a localization of $K \otimes_{\QCoh(X)} \QCoh_{\co}(X) = 0$, and thus $K$ itself is zero, as desired. 
\end{proof}

We now give a more concrete reformulation of condition (c) from proposition \ref{proposition equivalences 1affine} in the setting of qcqs algebraic stacks.

\begin{notation}
Let $f: Y \rightarrow X$ be a morphism of prestacks which is representable by qcqs algebraic spaces. We denote by $f^? : \QCoh(X) \rightarrow \QCoh(Y)$ the right adjoint to $f_*$.
\end{notation}

\begin{remark}
Let
\[
\begin{tikzcd}
X' \arrow{r}{g'} \arrow{d}{f'} & X \arrow{d}{f} \\
Y' \arrow{r}{g} & Y
\end{tikzcd}
\]
be a cartesian diagram of prestacks, and assume that $f$ is representable by qcqs algebraic spaces. Then the canonical base change natural transformation $g'_*f'^? \rightarrow f^?g_*$ is an isomorphism; this follows by passing to right adjoints the fact that the base change natural transformation $g^* f_* \rightarrow f'_*g'^*$ is an isomorphism.
\end{remark}

\begin{notation}
Let $p: U \rightarrow X$ be a morphism of prestacks which is representable by qcqs algebraic spaces. Let $U_\bullet$ be the \v{C}ech nerve of $p$, and for each $n$ denote by $p_n: U_n \rightarrow X$ the projection. For every $\mathcal{F}$ in $\QCoh(X)$ we denote by $\Bar_{p}(\mathcal{F})_\bullet$ the simplicial object of $\QCoh(X)$ with entries $(p_n)_*(p_n)^? \mathcal{F}$ and structure maps induced from the simplicial structure on $U_\bullet$.
\end{notation}

\begin{remark}\label{remark bar split}
Let $p: U \rightarrow X$ be a morphism of prestacks which is representable by qcqs algebraic spaces and let $\mathcal{F}$ be an object of $\QCoh(X)$. Then $\Bar_{p}(\mathcal{F})_\bullet$ admits a canonical augmentation map to $\mathcal{F}$, and the resulting augmented simplicial object becomes split after applying $p^?$.
\end{remark}

\begin{remark}\label{remark Bar is counit}
Let $p: U \rightarrow X$ be a morphism of prestacks which is representable by qcqs algebraic spaces, and let $U_\bullet$ be the \v{C}ech nerve of $p$. We may then form the geometric realization $|\QCoh(U_\bullet)|$ in $\Pr_\st$, where the transition maps are given by pushforward functors; alternatively, this can be computed as the totalization $\Tot \QCoh(U_\bullet)$ where the transition maps are given by ?-pullbacks. Let $\mu: |\QCoh(U_\bullet)|\rightarrow \QCoh(X)$ be the canonical map, and denote by $\mu^\text{R}$ its right adjoint. Then for every object $\mathcal{F}$ in $\QCoh(X)$ the map
\[
|\Bar_{p}(\mathcal{F})_\bullet| \rightarrow \mathcal{F} 
\]
agrees with the counit of the adjunction $\mu \dashv \mu^\text{R}$ at $\mathcal{F}$.
\end{remark}

\begin{definition}\label{definition bounded cohomological amplitude}
Let $F: \ccal \rightarrow \dcal$ be an exact functor between stable categories equipped with t-structures. We say that $F$ has cohomological amplitude bounded by $d$ if $F(\ccal_{\geq 0}) \subseteq \dcal_{\geq -d}$. We say that $F$ has bounded cohomological amplitude if it has cohomological amplitude bounded by $d$ for some $d$.
\end{definition}

\begin{proposition}\label{proposition check affineness via Bar}
Let $X$ be a qcqs algebraic stack with $0$-affine diagonal and let $p:U \rightarrow X$ be an affine cover. The following are equivalent:
\begin{enumerate}[\normalfont \indent (a)]
\item $X$ is $1$-affine.
\item For object $\mathcal{F}$ in $\QCoh(X)$ we have that $|\Bar_{p}(\mathcal{F})_\bullet| = \mathcal{F}$.
\item For every object $\mathcal{F}$ in $\QCoh(X)^\cn$ we have that $|\Bar_{p}(\mathcal{F})_\bullet| = \mathcal{F}$.
\end{enumerate}
If furthermore $p^?$ is assumed to have bounded cohomological amplitude then the above are  equivalent to:
\begin{enumerate}[\normalfont \indent (d)]
\item For every object $\mathcal{F}$ in $\QCoh(X)^\heartsuit$ we have that $|\Bar_{p}(\mathcal{F})_\bullet| = \mathcal{F}$.
\end{enumerate}
\end{proposition}

\begin{lemma}\label{lemma bound pstar}
Let $X$ be a qcqs algebraic stack and let $p: U \rightarrow X$ be an affine cover. Then $p_*$ has bounded cohomological amplitude.
\end{lemma}
\begin{proof}
Since $p^*$ is conservative and t-exact it suffices to show that $p^*p_*$ has bounded cohomological amplitude. This is equivalent to $(p_1)_*(p_2)^*$ where $p_1$ and $p_2$ are the projections $U \times_U U\rightarrow U$.  The desired assertion now follows from the fact that $p_2^*$ is t-exact, together with \cite{SAG} corollary 3.4.2.3.
\end{proof}

\begin{proof}[Proof of proposition \ref{proposition check affineness via Bar}]
Let $U_\bullet$ be the \v{C}ech nerve of $p$. Then $U_n$ is a qcqs algebraic space for every $n$, and in particular it is $1$-affine. By proposition \ref{proposition equivalences 1affine} we see that $\QCoh(U_\bullet) = \QCoh_\co(U_\bullet)$, and therefore the geometric realization of $\QCoh(U_\bullet)$ is equivalent to $\QCoh_\co(X)$. By remark \ref{remark Bar is counit}, condition (b) is then equivalent to the fully faithfulness of the right ajoint to $\nu_X$, which agrees with condition (a) by another application of proposition \ref{proposition equivalences 1affine}.

Clearly (b) implies (c) and (c) implies (d). We now show that (c) implies (b). Let $\mathcal{F}$ be an arbitrary object of $\QCoh(X)$. Then $\mathcal{F} = \colim_{m \to +\infty} \tau_{\geq -m} \Fcal$. Applying (c) for the connective objects $(\tau_{\geq -m} \Fcal )[m]$, we may reduce to showing that the canonical map of simplicial objects
\[
\colim_{m \to +\infty} \Bar_{p}(\tau_{\geq -m} \mathcal{F})_\bullet \rightarrow \Bar_{p}(\mathcal{F})_\bullet
\]
is an isomorphism. Equivalently, we wish to show that the cofiber
\[
\colim_{m \to \infty}  \Bar_p( \tau_{< -m} \mathcal{F})_\bullet
\]
vanishes. By lemma \ref{lemma bound pstar} we may assume that $p_*$ has cohomological amplitude bounded by $d$ for some $d$. Then the $n$-th entry in the above simplicial diagram belongs to $\QCoh(X)_{< -m + nd }$, and hence it vanishes in the limit $m \to \infty$.

We will finish by showing that (d) implies (c). Let $\mathcal{F}$ be an object of $\QCoh(X)^\cn$. We will show that for every $m \geq 0$ we have $\tau_{\leq m} |\Bar_p(\Fcal)_\bullet| = \tau_{\leq m} \Fcal$. Suppose that $p_*$ and $p^?$ have cohomological amplitude bounded by $d \geq 0$. Then the canonical map $\Bar_p(\Fcal)_\bullet \rightarrow \Bar_p(\tau_{\leq m + 2d(m+2)} \Fcal)_\bullet$ becomes an isomorphism on $n$-simplices for all $n \leq m+1$ after applying $\tau_{\leq m}$. It follows that this map induces an equivalence on geometric realizations after applying $\tau_{\leq m}$, so we may thus reduce to showing that
\[
\tau_{\leq m} | \Bar_p(\tau_{\leq m + 2d(m + 2)} \Fcal)_\bullet | = \tau_{\leq m}(\tau_{\leq m + 2d(m + 2)} \Fcal) .
\]
We claim that in fact
\[
| \Bar_p(\tau_{\leq m + 2d(m + 2)} \Fcal)_\bullet | = \tau_{\leq m + 2d(m + 2)} \Fcal .
\]
Indeed, this follows from our assumption, since $\tau_{\leq m + 2d(m + 2)} \Fcal$ belongs to the closure of $\QCoh(X)^\heartsuit$ under finite colimits and extensions.
\end{proof}


\ifx\inmain\undefined
\bibliographystyle{myamsalpha2}
\bibliography{References}
\fi


\section{Controlled algebraic stacks} \label{section controlled}

Let $X$ be a qcqs algebraic stack with $0$-affine diagonal. In section \ref{section 1affineness} we showed that $X$ is $1$-affine if and only if for every $\Fcal$ in $\QCoh(X)$ the comparison map $|\Bar_p(\Fcal)_\bullet| \rightarrow \Fcal$ is an isomorphism, where $p: U \rightarrow X$ is an affine cover. We will see in this section that this is the case whenever the following two conditions are satisfied:
\begin{enumerate}[\normalfont \indent (1)]
\item The functor $p^?: \QCoh(X) \rightarrow \QCoh(U)$ right adjoint to $p_*$ has bounded cohomological amplitude.
\item Pushforwards of eventually coconnective objects of $\QCoh(U)$ generate all eventually coconnective objects of $\QCoh(X)$ under colimits.
\end{enumerate}
We say that $X$ is controlled if the above conditions are satisfied. The goal of this section is to make a general study of the notion of controlled algebraic stack.

We begin in \ref{subsection def controlled} with some general estimates that show that properties (1) and (2) do not depend on the choice of $p$. We then give the definition of controlled algebraic stack, and prove two pleasant properties: controlled algebraic stacks are $1$-affine (proposition \ref{proposition controlled is 1affine}) and have cohomologically bounded products (proposition \ref{proposition bound products}). We also show that $X$ is controlled if and only if $X_\cl$ is controlled (proposition \ref{proposition classical controlled}): this allows us to restrict our attention to the case of classical algebraic stacks.

We then prove in \ref{subsection def controlled} that the property of being controlled  descends along quasi-finite flat surjective morphisms (theorem \ref{theorem quasi-finite}). Combined with our results from section \ref{section well stabilized}, this will allow us in section \ref{section proofs} to show that well stabilized stacks are controlled by first reducing to the case of very well stabilized stacks, where stratification techniques will be shown to work.


\subsection{Definition and basic properties}\label{subsection def controlled}

We begin with some basic estimates which will imply that the property of being controlled does not depend on the choice of affine cover.

\begin{notation}
Let $X$ be an algebraic stack. We say that an object in $\QCoh(X)$ is eventually coconnective if it is a shift of a coconnective object. We denote by $\QCoh(X)^\tau$ the full subcategory of $\QCoh(X)$ generated under colimits by the eventually coconnective objects. 
\end{notation}

\begin{proposition}\label{proposition controlled does not depend on U}
Let $X$ be a qcqs algebraic stack and let  $p: U \rightarrow X$ and $q: V \rightarrow X$ be affine covers. 
\begin{enumerate}[\normalfont \indent (1)]
\item If $p^?$ has cohomological amplitude bounded by $d$ then $q^?$ has cohomological amplitude bounded by $d+1$.
\item If $p_*$ has cohomological amplitude bounded by $d$ then $q_*$ has cohomological amplitude bounded by $d+1$.
\item The full subcategory of $\QCoh(X)$ generated under colimits by $p_*{\QCoh(U)^\tau}$ agrees with the full subcategory of $\QCoh(X)$ generated under colimits by $q_*{\QCoh(V)^\tau}$. 
\end{enumerate}
\end{proposition}

\begin{lemma}\label{lemma check bound on heart}
Let $\ccal$ and $\Dcal$ be presentable stable categories equipped with t-structures, and let $F: \Ccal \rightarrow \Dcal$ be a limit preserving functor. Assume that the t-structure on $\Ccal$ is left complete and that products in $\Dcal$ are t-exact. Then for each $d \geq 0$ the following are equivalent:
\begin{enumerate}[\normalfont \indent (i)]
\item $F$ has cohomological amplitude bounded by $d$.
\item $F(c)$ is $(-d)$-connective for every object $c$ in $\Ccal^\heartsuit$.
\end{enumerate}
\end{lemma}
\begin{proof}
The fact that (i) implies (ii) is clear, so we only need to show that (ii) implies (i). Let $c$ be a connective object in $\Ccal$. We wish to show that $F(c)$ is $(-d)$-connective.

Since the t-structure on $\Ccal$ is left complete and $F$ preserves limits we have isomorphisms 
\[
F(\tau_{\geq 1} c) = F(\lim \tau_{\leq m} \tau_{\geq 1} c) = \lim F(\tau_{\leq m} \tau_{\geq 1} c).
\]
For each $m$ we have that  $(\tau_{\leq m} \tau_{\geq 1} c)[-1]$ belongs to the closure of $\Ccal^\heartsuit$ under finite colimits and extensions, so we have that $F(\tau_{\leq m} \tau_{\geq 1} c)$ is $(-d+1)$-connective. Since products in $\Dcal$ are t-exact we have that inverse limits have cohomological amplitude bounded by $1$, and hence $F(\tau_{\geq 1} c)$ is $(-d)$-connective.  Consider now the exact sequence
\[
F( \tau_{\geq 1} c ) \rightarrow F(c) \rightarrow F(H_0(c)).
\]
We have shown that the first term is $(-d)$-connective, and the third term is $(-d)$-connective by our assumptions. It follows that $F(c)$ is $(-d)$-connective, as desired.
\end{proof}

\begin{lemma}\label{lemma bound amplitude}
Let $f: S \rightarrow T$ be a flat almost finitely presented morphism of affine schemes. Then $f^?$ has cohomological amplitude bounded by $1$.
\end{lemma}
\begin{proof}
By lemma \ref{lemma check bound on heart} it is enough to show that $f^?(\Fcal)$ is $(-1)$-connective for every $\Fcal$ in $\QCoh(T)^\heartsuit$. Let $i_T: T_\cl \rightarrow T$ and $i_S: S_\cl \rightarrow S$ be the inclusions. Then $\Fcal = (i_T)_* \Gcal$ for some object $\Gcal$ in $\QCoh(T_\cl)^\heartsuit$. Now  
\[
f^? \Fcal = f^? (i_T)_* \Gcal = (i_S)_* (f_\cl)^? \Gcal
\]
and since $(i_S)_*$ is t-exact we may reduce to showing that $(f_\cl)^? \Gcal$ belongs to $\QCoh(S_\cl)_{\geq -1}$. In other words, we have reduced to showing that $\pi_0(\Ocal(S))$ has projective dimension at most $1$ as a $\pi_0(\Ocal(T))$-module. This follows from the fact that $\pi_0(\Ocal(S))$ is a flat and $\omega_1$-compactly generated $\pi_0(\Ocal(T))$-module (and hence a directed colimit of a sequence of finite free modules).
\end{proof}

\begin{proof}[Proof of proposition \ref{proposition controlled does not depend on U}]
We first prove part (1). Fix an affine cover $W \rightarrow U \times_X V$. Let $q': W \rightarrow U$ and $p' : W \rightarrow V$ be the projections, and denote by $W_\bullet$ the \v{C}ech nerve of $p'$. For each $n$ let $p'_n: W_n \rightarrow V$ be the projection, and let $r_n: W_n \rightarrow W$ be the first projection.

 We have that $q^?$ is the geometric realization of $\Bar_{p'}(q^?(-))_\bullet$, so it suffices to show that each of the functors $(p'_n)_*(p'_n)^?q^?$ has cohomological amplitude bounded by $d+1$.  Since $(p'_n)_*$ is t-exact, we may further reduce to showing that for each $n \geq 1$ the functor $(p'_n)^? q^?$ has cohomological amplitude bounded by $d + 1$. This may be written as $(q' \circ r_n)^? p^?$, and since $p^?$ is assumed to have cohomological amplitude bounded by $d$ we may reduce to showing that $(q' \circ r_n)^?$ has cohomological amplitude bounded by $1$. This now follows from an application of lemma \ref{lemma bound amplitude}.
 
We now prove (2). Since $q_*$ is the geometric realization of $q_*\Bar_{p'}(-)_\bullet$ it suffices to show that each of the functors $q_*(p'_n)_*(p'_n)^?$ has cohomological amplitude bounded by $d+1$. By lemma \ref{lemma bound amplitude} we see that $(p'_n)^?$ has cohomological amplitude bounded by $1$, so we may reduce to showing that $q_* (p'_n)_*$ has cohomological amplitude bounded by $d$. Indeed, this equals $p_* q'_* (r_n)_*$, and the desired assertion follows from the fact that $q'_*$ and $(r_n)_*$ are t-exact, together with our assumption on $p_*$.
 
It remains to prove (3). It suffices by symmetry to show that for every eventually coconnective object $\Fcal$ in $\QCoh(V)$ we have that $q_* \Fcal$ belongs to the closure under colimits of $p_*\QCoh(U)^\tau$. Indeed, $q_* \Fcal$ is the geometric realization of $q_* \Bar_{p'}(\Fcal)_\bullet$, whose entries are given by 
\[
q_*(p'_n)_*(p'_n)^? \Fcal = p_*(q'_* (r_n)_* (p'_n)^?\Fcal)
\]
and the desired claim follows from the fact that $q'_* (r_n)_* (p'_n)^?\Fcal$ is eventually coconnective for all $n$.
\end{proof}

\begin{definition}\label{definition controlled}
Let $X$ be a qcqs algebraic stack with $0$-affine diagonal. We say that $X$ is controlled if for any affine cover $p: U \rightarrow X$ the following conditions are satisfied:
\begin{enumerate}[\normalfont \indent (a)]
\item $p^?$ has bounded cohomological amplitude.
\item $\QCoh(X)^\tau$ is generated under colimits by $p_*{\QCoh(U)^\tau}$.
\end{enumerate}
\end{definition}

\begin{remark}
It follows from proposition \ref{proposition controlled does not depend on U} that the validity of conditions (a) and (b) from definition \ref{definition controlled} does not depend on the choice of affine cover.
\end{remark}

\begin{remark}\label{remark controlled iff 1affine when bound}
Let $X$ be a qcqs algebraic stack with $0$-affine diagonal, and suppose that condition (a) in definition \ref{definition controlled} is satisfied, for an affine cover $p: U \rightarrow X$. If in addition $X$ is $1$-affine then we have, for any eventually coconnective object $\Fcal$ in $\QCoh(X)$, that $\Fcal$ is the geometric realization of $\Bar_p(\Fcal)_\bullet$ (proposition \ref{proposition check affineness via Bar}), whose entries belong to $p_*\QCoh(U)^\tau$. Hence we see that condition (b) is automatically satisfied. Combined with proposition \ref{proposition controlled is 1affine} below, this shows that, in the presence of condition (a), condition (b) is equivalent to $1$-affineness.
\end{remark}

\begin{example}\label{example BGm}
For each $n \geq 0$ the stack $\B \GG^n_{m, \ZZ}$ is controlled. Properties (a) and (b) are readily verified for the cover $p: \Spec(\ZZ) \rightarrow \B \GG^n_{m, \ZZ}$, by making use of the Cartier duality equivalence $\QCoh(\B \GG^n_{m, \ZZ}) = (\Mod_{\ZZ})^{\ZZ^n}$.
\end{example}

\begin{example}\label{example linearly reductive}
Let $G$ be an affine algebraic group over a field $k$. We claim that if $G$ is linearly reductive then $\B G$ is controlled. Let $p: \Spec(k) \rightarrow \B G$ be the projection. Condition (a) in definition \ref{definition controlled} follows readily from the fact that $p_*\Ocal_{\Spec(k)}$ is a projective object of $\QCoh(\B G)$. Meanwhile, condition (b) follows from the fact that if $\Fcal$ is an eventually coconnective object of $\QCoh(\B G)$ then $\Fcal$ is a direct summand of $p_*p^* \Fcal$.
\end{example}

\begin{example}\label{example quasi-affine}
Let $X$ be a quasi-affine scheme. Then it follows from lemma \ref{lemma bound amplitude} that $X$ satisfies condition (a) in definition \ref{definition controlled}. Combined with the fact that $X$ is $1$-affine, this shows that $X$ is controlled, by remark \ref{remark controlled iff 1affine when bound}.
\end{example}

Our interest in the notion of controlled algebraic stacks stems from the following:

\begin{proposition}\label{proposition controlled is 1affine}
Let $X$ be a qcqs algebraic stack with $0$-affine diagonal. If $X$ is controlled then $X$ is $1$-affine.
\end{proposition}

\begin{lemma}\label{lemma functor preserves geometric realization}
Let $F: \mathcal{C} \rightarrow \mathcal{D}$ be an exact functor between stable categories and let $S_\bullet$ be a simplicial object in $\mathcal{C}$. Suppose that $\mathcal{C}$ and $\mathcal{D}$ are equipped with left complete t-structures, that $F$ has bounded cohomological amplitude, and that $S_\bullet$ factors through $\ccal_{\geq -d}$ for some $d$. Then $S_\bullet$ has a geometric realization, which is preserved by $F$. 
\end{lemma}
\begin{proof}
By shifting the t-structures we may, without loss of generality, assume that $F$ preserves connective objects and that $S_\bullet$ is a simplicial object in $\Ccal_{\geq 0}$. We have a commutative diagram
\[
\begin{tikzcd}
\ccal_{\geq 0} \arrow{r}{} \arrow{d}{F} & \ldots \arrow{r}{\tau_{\leq 2}} & \ccal_{[0,2]} \arrow{r}{\tau_{\leq 1}} \arrow{d}{\tau_{\leq 2}F} & \ccal_{[0,1]} \arrow{r}{\tau_{\leq 0}} \arrow{d}{\tau_{\leq 1}F} & \ccal^\heartsuit \arrow{d}{F^\heartsuit} \\  
\dcal_{\geq 0} \arrow{r}{\tau_{\leq 1}}  & \ldots \arrow{r}{\tau_{\leq 2}} & \dcal_{[0,2]} \arrow{r}{\tau_{\leq 1}}  & \dcal_{[0,1]} \arrow{r}{\tau_{\leq 0}} & \ccal^\heartsuit
\end{tikzcd}
\]
where the rows are limit diagrams of categories with finite colimits, every horizontal functor is a left adjoint, and every vertical functor preserves finite colimits. To prove the lemma it suffices to show that $\tau_{\leq m} S_\bullet$ has a geometric realization in $\ccal_{[0, m]}$ for all $m$, which is preserved by $\tau_{\leq m} F$. Indeed, $\ccal_{[0, m]}$ and $\dcal_{[0, m]}$ are  $(m+1,1)$-categories, so geometric realizations in them be  can computed in terms of finite colimits.
\end{proof}

\begin{lemma}\label{lemma uniform bound pull push}
Let $X$ be a qcqs algebraic stack with $0$-affine diagonal. If $X$ is controlled then there exists an integer $d \geq 0$ such that for every qcqs algebraic space $U$ and every flat almost finitely presented morphism $p: U \rightarrow X$ the functor $p_*p^?$ has cohomological amplitude bounded by $d$.
\end{lemma}
\begin{proof}
By proposition \ref{proposition controlled does not depend on U} we may pick $d \geq 0$ such that $p_*p^?$ has cohomological amplitude bounded by $d$ for every affine cover $p: U \rightarrow X$. We claim that we have the same bounds also in the case when $U$ is an algebraic space and $p$ is flat and almost finitely presented.

First, suppose that $U$ is affine (but $p$ is not necessarily surjective). Let $W \rightarrow X$ be an affine cover and let $p': U \amalg W \rightarrow X$ be the canonical map. Then $p_*p^?$ is a direct summand of $p'_*p'^?$, so the desired bound on $p_*p^?$ follows from the case of affine covers.

Suppose now that $U$ is a qcqs algebraic space. Let $q: V \rightarrow U$ be an affine cover. Denote by $V_\bullet$ the \v{C}ech nerve of $q$, and for each $n$ let $q_n: V_n \rightarrow U$ be the projection. Then applying proposition \ref{proposition check affineness via Bar} we see that $p_*p^?$ is the geometric realization of $p_*\Bar_q(p^?(-))_\bullet$, whose entries are given by  $(p \circ q_n)_* (p\circ q_n)^?$. Assume for a moment that $U$ has affine diagonal. Then each of the entries in $p_*\Bar_q(p^?(-))_\bullet$ has cohomological amplitude bounded by $d$ by the affine case of the proposition, so we deduce that $p_*p^?$ has cohomological amplitude bounded by $d$. In the general case the same reasoning now applies, using the fact that $V_n$ has affine diagonal for every $n$ (since it is a quasi-affine scheme).
\end{proof}

\begin{proof}[Proof of proposition \ref{proposition controlled is 1affine}]
Let $p: U \rightarrow X$ be an affine cover.  By proposition \ref{proposition check affineness via Bar} it is enough to show that for every object $\mathcal{F}$ in $\QCoh(X)^\heartsuit$, the natural map
\[
|\Bar_p(\mathcal{F})_\bullet| \rightarrow \mathcal{F}
\]
is an isomorphism. Observe that this map belongs to $\QCoh(X)^\tau$. Since $X$ is controlled we have that $p^?$ is conservative on $\QCoh(X)^\tau$, so we may reduce to showing that the induced map
\[
p^? |\Bar_p(\Fcal)_\bullet| \rightarrow p^?\mathcal{F}
\]
is an isomorphism. Applying lemma \ref{lemma uniform bound pull push} we may find an integer $d \geq 0$ such that $\Bar_p(\Fcal)_\bullet$ factors through $\QCoh(X)_{\geq -d}$.   An application of lemma \ref{lemma functor preserves geometric realization} then shows that $p^?$ preserves the geometric realization of $\Bar_p(\Fcal)_\bullet$. Consequently, it is enough to prove that the canonical map
\[
|p^? \Bar_p(\Fcal)| \rightarrow p^? \Fcal
\]
is an isomorphism. This follows from remark \ref{remark bar split}.
\end{proof}

We also have good cohomological behavior of products on controlled algebraic stacks:

\begin{proposition}\label{proposition bound products}
Let $X$ be a qcqs algebraic stack with $0$-affine diagonal. If $X$ is controlled then products in $\QCoh(X)$ have bounded cohomological amplitude.
\end{proposition}

\begin{lemma}\label{lemma bound pull push product}
Let $X$ be a qcqs algebraic stack with $0$-affine diagonal. If $X$ is controlled then there exists an integer $d \geq 0$ such that for every qcqs algebraic space $U$, every flat almost finitely presented morphism $p: U \rightarrow X$, and every family $\Fcal_\alpha$ of connective objects of $\QCoh(X)$, the object $p_*p^? \prod_\alpha \Fcal_\alpha$ is $(-d)$-connective.
\end{lemma}
\begin{proof}
By proposition \ref{proposition controlled does not depend on U} we may pick an even integer $d \geq 0$ such that $q_*$ and $q^?$ have cohomological amplitude bounded by $d/2$ for every affine cover $q: V \rightarrow X$. We claim that this integer satisfies the desired conclusion. For the remainder of the proof we fix $p: U \rightarrow X$ and $\Fcal_\alpha$ as in the statement.

Suppose first that $p$ is affine. Let $W \rightarrow X$ be an affine cover and let $p': U \amalg W \rightarrow X$ be the canonical map. Then $p_*p^? \prod_\alpha \Fcal_\alpha$ is a direct summand of
\[
p'_*p'^? \textstyle{\prod}_\alpha \Fcal_\alpha = p'_* \textstyle{\prod}_\alpha p'^? \Fcal_\alpha
\]
and the desired bound follows from our bounds on $p'_*$ and $p'^?$ together with the fact that products in $\QCoh(U \amalg W)$ are t-exact.

Suppose now that $U$ is a qcqs algebraic space. Let $q: V \rightarrow U$ be an affine cover. Denote by $V_\bullet$ the \v{C}ech nerve of $q$, and for each $n$ let $q_n: V_n \rightarrow U$ be the projection. Then $p_*p^? \prod_\alpha \Fcal_\alpha$ is the geometric realization of $p_*\Bar_q(p^? \prod_\alpha \Fcal_\alpha)_\bullet$, whose entries are given by  $(p \circ q_n)_* (p\circ q_n)^? \prod_\alpha \Fcal_\alpha$. Assume for a moment that $U$ has affine diagonal. Then each of the entries in $p_*\Bar_q(p^?\prod_\alpha \Fcal_\alpha)_\bullet$  is $(-d)$-connective by the affine case of the proposition, so we deduce that $p_*p^?\prod_\alpha \Fcal_\alpha$  is $(-d)$-connective.  In the general case the same reasoning now applies, using the fact that $V_n$ has affine diagonal for every $n$ (since it is a quasi-affine scheme).
\end{proof}

\begin{proof}[Proof of proposition \ref{proposition bound products}]
Let $d$ be as in lemma \ref{lemma bound pull push product}. We claim that products in $\QCoh(X)$ have cohomological amplitude bounded by $d$. Let $\Fcal_\alpha$ be a family of connective objects in $\QCoh(X)$ and let $p: U \rightarrow X$ be an affine cover. Then 
\[
\textstyle{\prod}_\alpha \Fcal_\alpha = |\Bar_p(\textstyle{\prod}_\alpha \Fcal_\alpha)_\bullet|
\]
so it is enough to show that each entry in $\Bar_p(\prod_\alpha \Fcal_\alpha)_\bullet$  is $(-d)$-connective.   Let $U_\bullet$ be the \v{C}ech nerve of $p$, and for each $n$ denote by $p_n: U_n \rightarrow X$ the projection. Then the entries in $\Bar_p(\prod_\alpha \Fcal_\alpha)_\bullet$ are given by $(p_n)_*(p_n)^?\prod_\alpha \Fcal_\alpha$, and the desired bound follows from our requirement on $d$.
\end{proof}

The remainder of this section is devoted to establishing some basic results concerning the class of controlled stacks.

\begin{proposition}\label{proposition classical controlled}
Let $X$ be a qcqs algebraic stack with $0$-affine diagonal. If $X_\cl$ is controlled then $X$ is controlled.
\end{proposition}
\begin{proof}
Let $p: U \rightarrow X$ be an affine cover, and denote by $i_X : X_\cl \rightarrow X$ and $i_U: U_\cl \rightarrow U$ the inclusions. We wish first to show that $p^?$ has bounded cohomological amplitude. Suppose that $(p_\cl)^?$ has cohomological amplitude bounded by $d$. We claim that $p^?$ also has cohomological amplitude bounded by $d$. By lemma \ref{lemma check bound on heart} it suffices to show that for every object $\Fcal$ in $\QCoh(X)^\heartsuit$ we have that $p^?\Fcal$ is $(-d)$-connective.  Write $\mathcal{F} = (i_X)_* \mathcal{G}$ for some object $\mathcal{G}$ in $\QCoh(X_\cl)^\heartsuit$. Then 
\[
p^? \mathcal{F} = p^? (i_X)_* \mathcal{G} = (i_U)_* (p_\cl)^? \mathcal{G}
\]
and the desired assertion follows from the fact that $(p_\cl)^?$ has cohomological amplitude bounded by $d$ and $(i_U)_*$ is t-exact.

It now remains to show that $p_*\QCoh(U)^\tau$ generates $\QCoh(X)^\tau$ under colimits. Since $(i_U)_* \QCoh(U_\cl)^\tau$ is contained in $\QCoh(U)^\tau$, it suffices to prove that $p_*(i_U)_* \QCoh(U_\cl)^\tau$ generates  $\QCoh(X)^\tau$ under colimits. We have  
\[
p_*(i_U)_* \QCoh(U_\cl)^\tau = (i_X)_* (p_\cl)_* \QCoh(U_\cl)^\tau,
\]
 so using the fact that $X_\cl$ is controlled we may further reduce to showing that $(i_X)_* \QCoh(X_\cl)^\tau$ generates $\QCoh(X)^\tau$ under colimits. Indeed, $\QCoh(X)^\tau$ is generated under colimits by shifts of objects in $\QCoh(X)^\heartsuit$, which belong to $(i_X)_*\QCoh(X_\cl)^\tau$ since $(i_X)_*$ induces an equivalence on hearts.
\end{proof}

\begin{proposition}\label{proposition quasi-affine over controlled}
Let $f: X \rightarrow Y$ be a quasi-affine morphism of qcqs algebraic stacks with $0$-affine diagonal. If $Y$ is controlled then $X$ is controlled.
\end{proposition}

\begin{lemma}\label{lemma bound on global}
Let $S$ be a quasi-affine scheme and let $\Fcal$ be a quasi-coherent sheaf on $S$. If $\Gamma(S, \Fcal)$ is connective then $S$ is connective.
\end{lemma}
\begin{proof}
We have a quasi-compact open immersion $j: S \rightarrow T$ for some affine scheme $T$.  Our hypothesis guarantees that $j_*$ is connective. It follows that $j^*j_*\Fcal = \Fcal$ is connective, as desired.
\end{proof}

\begin{proof}[Proof of proposition \ref{proposition quasi-affine over controlled}]
Let $p: U \rightarrow Y$ be an affine cover. Let $p': U' \rightarrow X$ and $f': U' \rightarrow U$ be the base changes of $p$ and $f$. Let $q: V \rightarrow U'$ be an affine cover, so that $p' \circ q$ is an affine cover of $X$.

 We first show that $(p'\circ q)^?$ has bounded cohomological amplitude. Using example \ref{example quasi-affine} we reduce to showing that $p'^?$ has bounded cohomological amplitude. By lemma \ref{lemma bound on global} it suffices to show that $f'_* p'^?$ has bounded cohomological amplitude. Indeed, this is equivalent to $p^?f_*$, which is the composition of two functors of bounded cohomological amplitude.

By remark \ref{remark controlled iff 1affine when bound}, condition (b) in definition \ref{definition controlled} will follow if we show that $X$ is $1$-affine. This follows from the fact that $Y$ is $1$-affine (by proposition \ref{proposition controlled is 1affine}) and $f$ is $1$-affine.
\end{proof}


\subsection{Quasi-finite flat locality}

Our next goal is to prove the following:

\begin{theorem}\label{theorem quasi-finite}
Let $f_\alpha: X_\alpha \rightarrow X$ be a jointly surjective family of maps between qcqs algebraic stacks with $0$-affine diagonal. Suppose that each $f_\alpha$ is quasi-finite, flat, separated, of finite presentation to order $0$, and representable by algebraic spaces. If $X_\alpha$ is controlled for all $\alpha$ then $X$ is controlled.
\end{theorem}

The proof of theorem \ref{theorem quasi-finite} will require some preliminaries.

\begin{lemma}\label{lemma union of controlled}
Let $X_\alpha$ be a finite family of qcqs algebraic stacks with $0$-affine diagonal. If each $X_\alpha$ is controlled then their disjoint union $X = \coprod X_\alpha$ is controlled.
\end{lemma}
\begin{proof}
Fix for each $\alpha$ an affine cover $p_\alpha: U_{\alpha} \rightarrow X_{\alpha}$. Let $U = \coprod U_\alpha$ so that we have an affine cover $p: U \rightarrow X$. We have t-exact equivalences
\[
\QCoh(U) = \prod_\alpha \QCoh(U_\alpha)
\]
and 
\[
\QCoh(X) = \prod_\alpha \QCoh(X_\alpha)
\]
which interchange the functor $p_*$ for $\prod_\alpha p_{\alpha, *}$ and the functor $p^?$ for $\prod_\alpha p^?_\alpha$. Conditions (a) and (b) in definition \ref{definition controlled} for $p$ now follow from the fact that they hold for each $p_\alpha$.
\end{proof}

\begin{definition}\label{definition finite flat}
Let $f: X \rightarrow Y$ be a morphism between algebraic stacks. We say that $f$ is finite flat if it is affine and $f_*\Ocal_X$ is a vector bundle on $Y$.
\end{definition}

\begin{warning}
Let $f: X \rightarrow Y$ be a morphism between algebraic stacks. The property of $f$ being finite flat is not the same as being finite and flat (where here $f$ is said to be finite if it is affine and $\pi_0(f_*\Ocal_X)$ is, locally on $Y$, a finitely generated $\pi_0\Ocal_Y$-module). The two notions however agree in the case when $Y$ is Noetherian.
\end{warning}

\begin{lemma}\label{lemma finite flat}
Let $f: X' \rightarrow X$ be a morphism between qcqs algebraic stacks with $0$-affine  diagonal. Suppose that $f$ is a finite flat surjection. If $X'$ is controlled then $X$ is controlled.
\end{lemma}
\begin{proof}
Let $p: U \rightarrow X$ be an affine cover. We wish first to show that $p^?$ has bounded cohomological amplitude. Let $p': U' \rightarrow X'$ and $f': U' \rightarrow U$ be the base changes of $p$ and $f$. The fact that $f'$ is a finite flat surjection implies that $f'^?$ is conservative and t-exact. We may thus reduce to showing that $f'^? p^?$ has bounded cohomological amplitude. This agrees with the composition $p'^? f^?$. Here $f^?$ is t-exact and $p'^?$ has bounded cohomological amplitude since $X$ is controlled, so our claim follows.

To finish the proof it remains to show that $p_*\QCoh(U)^\tau$ generates $\QCoh(X)^\tau$ under colimits. Since $p_*\QCoh(U)^\tau$ contains $p_*f'_*\QCoh(U')^\tau$, it suffices to show that the latter generates $\QCoh(X)^\tau$ under colimits. This agrees with $f_*p'_*\QCoh(U')^\tau$ and, since $X'$ is controlled, its colimit closure agrees with the colimit closure of $f_*\QCoh(X')^\tau$. We have thus reduced to showing that $f_*\QCoh(X')^\tau$ generates $\QCoh(X)^\tau$ under colimits. 

Let $\mathcal{F}$ be an eventually coconnective object in $\QCoh(X)$. Then the simplicial object $\Bar_{f}(\mathcal{F})_\bullet$ factors through $f_*\QCoh(X')^\tau$, so it is enough to prove that $|\Bar_f(\mathcal{F})_\bullet| = \mathcal{F}$. Since $f^?$ is conservative and preserves colimits we may reduce to proving that $|f^?\Bar_f(\mathcal{F})_\bullet| = f^?\mathcal{F}$. This follows from remark \ref{remark bar split}.
\end{proof}

\begin{notation}
Let $i: Z \rightarrow X$ be a closed immersion of classical prestacks, and assume that the complementary open to $Z$ is quasi-compact. Let $\hat{i}: X^\wedge_Z \rightarrow X$ be the inclusion of the formal completion of $X$ at $Z$. Then we denote by $(-)^\wedge_Z: \QCoh(X) \rightarrow \QCoh(X)$ the functor of completion at $Z$, that is, the composite functor
\[
\QCoh(X) \xrightarrow{\hat{i}^*} \QCoh(X^\wedge_Z) \xrightarrow{\hat{i}_*} \QCoh(X).
\]
\end{notation}

\begin{remark}\label{remark formal completion as limit}
Let $i: Z \rightarrow X$ be a closed immersion of classical algebraic stacks, and assume that the ideal sheaf $\Ical$ of $Z$ is finitely generated. For each $n$ let $Z_n = \Spec(\Ocal_X/\Ical^n)$, with corresponding inclusion $i_n: Z_n \rightarrow X$. Then we have $X^\wedge_Z = \colim Z_n$, and in particular we have an equivalence
\[
(-)^\wedge_Z = \lim (i_n)_* (i_n)^*
\]
of endofunctors of $\QCoh(X)$.
\end{remark}

\begin{lemma}\label{lemma open and closed}
Let $X$ be a classical qcqs algebraic stack with $0$-affine diagonal. Let $Z$ be a classical closed subscheme of $X$ with finitely generated ideal sheaf, and let $Y$ be its complement. Let $\hat{i}: X^\wedge_Z \rightarrow X$ be the inclusion of the formal completion of $Z$. Assume that $Z$ and $Y$ are controlled and that the functor $(-)^\wedge_Z : \QCoh(X) \rightarrow \QCoh(X)$ has bounded cohomological amplitude. Then $X$ is controlled.
\end{lemma}
\begin{proof}
 Let $i:Z \rightarrow X$ and $j: Y \rightarrow X$ be the inclusions. Fix an affine cover $p:U \rightarrow X$. Let $p_Z : U_Z \rightarrow Z$ and $p_Y : U_Y \rightarrow Y$ be its base changes, and let $i_U: U_Z \rightarrow U$ and $j_U : U_Y \rightarrow U$ be the inclusions.   
 
We first show that $p^?$ has bounded cohomological amplitude. We have an extension of exact functors 
\[
p^? j_*  j^? \rightarrow p^? \rightarrow p^? (-)^\wedge_Z
\]
so it suffices to show that the first and the third functors have bounded cohomological amplitude.

The functor $p^?j_*j^?$ is equivalent to $(j_U)_* (p_Y)^? j^?$. We claim that all three of these functors have bounded cohomological amplitude. The fact that $(j_U)_*$ has bounded cohomological amplitude follows since $U_Y$ is a quasi-compact open in $U$. We now show that $(p_Y)^?$ has bounded cohomological amplitude. Let $q: V \rightarrow U_Y$ be an affine cover. Denote by $V_\bullet$ its \v{C}ech nerve, and by $q_n: V_n \rightarrow U_Y$ the projections. Then $(p_Y)^?$ is the geometric realization of $\Bar_{q}((p_Y)^?(-))_\bullet$, so it is enough to argue that the functors $(q_n)_*(q_n)^?(p_Y)^?$ have uniformly bounded cohomological amplitude. Indeed, $(q_n)_*$ is t-exact, and the functors $(p_Y \circ q_n)^?$ have uniformly bounded cohomological amplitude using that $Y$ is controlled together with proposition \ref{proposition controlled does not depend on U}.

To finish proving that $p^?j_*j^?$ has bounded cohomological amplitude we will show that $j^?$ has bounded cohomological amplitude. Since $j^? = j^*j_*j^?$ and $j^*$ is t-exact, it is enough to prove that $j_*j^?$ has bounded cohomological amplitude. Indeed, this sits in an exact sequence
\[
j_*  j^? \rightarrow \id \rightarrow (-)^\wedge_Z
\]
and the third term has bounded cohomological amplitude by our hypothesis.

It now remains to show that the functor $p^? (-)^\wedge_Z$ has bounded cohomological amplitude. Suppose that $p_Z^?$ has cohomological amplitude bounded by $d$. We will prove that $p^?(-)^\wedge_Z$ has cohomological amplitude bounded by $d+1$. Let $i_n: Z_n \rightarrow X$ be as in remark \ref{remark formal completion as limit}, so that $(-)^\wedge_Z = \lim (i_n)_* (i_n)^*$. We then have that $p^? (-)^\wedge_Z$ is the limit of the functors $p^?(i_n)_*(i_n)^*$ so it suffices to show that each of these has cohomological amplitude bounded by $d$. Since $(i_n)^*$ is right t-exact, we may further reduce to proving that $p^? (i_n)_*$ has cohomological amplitude bounded by $d$. By lemma \ref{lemma check bound on heart}, it is enough to check that $p^?(i_n)_*\Fcal$ is $(-d)$-connective for every object $\Fcal$ in $\QCoh(Z_n)^\heartsuit$. Since $(i_n)_* \Fcal$ has a finite filtration with associated graded pieces in $i_*\QCoh(Z)^\heartsuit$ we may reduce to the case $n = 1$. In this case the desired assertion follows from the fact that $p^?i_* = (i_U)_* (p_Z)^?$ has cohomological amplitude bounded by $d$.

To finish the proof of the lemma we have to show that $p_*\QCoh(U)^\tau$ generates $\QCoh(X)^\tau$ under colimits. Note that $p_*\QCoh(U)^\tau$ contains $i_*(p_Z)_*\QCoh(U_Z)^\tau$ and $j_*(p_Y)_*\QCoh(U_Y)^\tau$. Since $Z$ and $Y$ are controlled, we may reduce to showing that the union of $i_*\QCoh(Z)^\tau$ and $j_*\QCoh(Y)^\tau$ generate $\QCoh(X)^\tau$ under colimits.

Let $\mathcal{F}$ be an eventually coconnective object in $\QCoh(X)$. We have an exact sequence
\[
\mathcal{F}_Z \rightarrow \mathcal{F} \rightarrow \mathcal{F}_Y = j_*j^*\mathcal{F}.
\]
The third term belongs to $j_*\QCoh(Y)^\tau$ so it is enough to prove that $\mathcal{F}_Z$ belongs to the closure of $i_*\QCoh(Z)^\tau$ under colimits. Since $\mathcal{F}_Z$ is eventually coconnective, we may reduce to showing that for each $k$ the sheaf $H_k(\mathcal{F}_Z)$ belongs to the closure of $i_*\QCoh(Z)^\tau$. Replacing $\mathcal{F}$ with $H_k(\mathcal{F}_Z)$ we may now assume that $\Fcal$ belongs to $\QCoh(X)^\heartsuit_Z$.  We then have
\[
\Fcal = \colim (i_n)_* H_0((i_n)^?  \Fcal)
\]
so replacing $\Fcal$ with $(i_n)_*H_0((i_n)^?  \Fcal)$ we may now assume that $\Fcal$ belongs to $(i_n)_*\QCoh(Z_n)^\heartsuit$. In this case $\Fcal$ belongs to the closure of $i_*\QCoh(Z)^\heartsuit$ under extensions.
\end{proof}

\begin{proof}[Proof of theorem \ref{theorem quasi-finite}]
By quasi-compactness of $X$ we may restrict to the case when the family is finite.  By lemma \ref{lemma union of controlled} we have that $X' = \amalg X_\alpha$ is controlled. Note that the induced surjection $f: X' \rightarrow X$ is quasi-finite, flat, separated, of finite presentation to order $0$, and representable by algebraic spaces, so that we have in effect reduced to the case when the family consists of a single map.

By proposition \ref{proposition classical controlled} it suffices to show that $X_\cl$ is controlled. An application of proposition \ref{proposition quasi-affine over controlled} shows that $X'_\cl$ is controlled, so that replacing $X$ by $X_\cl$ and $X'$ by $X'_\cl$ we may now assume that $X$ is classical. 

We argue via quasi-finite flat d\'{e}vissage, see \cite{RydhDevissage} theorem 6.1. Condition (D1) follows from proposition \ref{proposition quasi-affine over controlled}, while condition (D2) (which is really about finite flat maps) is given by lemma \ref{lemma finite flat}. It only remains to check condition (D3). That is, suppose that we have a quasi-compact open substack $U$ of $X$ and an \'{e}tale morphism $p: V \rightarrow X$ which is separated, of finite presentation to order $0$, representable by algebraic spaces, and whose base change to the complement of $U$ is an isomorphism. We want to show that if $U$ and $V$ are controlled then $X$ is controlled.

By the main theorem of \cite{RydhApproximationSheaves} we may find a classical closed substack $Z$ of $X$ with finitely generated ideal sheaf and whose complement is $U$. Applying proposition \ref{proposition quasi-affine over controlled} we see that $Z$ is controlled, as  $Z = p^{-1}(Z)$ is a closed substack of $V$. By lemma \ref{lemma open and closed} it is now enough to show that the functor $(-)^\wedge_Z: \QCoh(X) \rightarrow \QCoh(X)$ has bounded cohomological amplitude. We may factor this functor as the composition
\[
\QCoh(X) \xrightarrow{p^*} \QCoh(V) \xrightarrow{(-)^\wedge_{p^{-1}(Z)}} \QCoh(V) \xrightarrow{p_*} \QCoh(X)
\]
where the middle arrow is the functor of completion at $p^{-1}(Z)$. Since the functors $p^*$ and $p_*$ have bounded cohomological amplitude we may reduce to showing that ${(-)^\wedge_{p^{-1}(Z)}}$ has bounded cohomological amplitude. This follows from a combination of remark \ref{remark formal completion as limit} and proposition \ref{proposition bound products}. 
\end{proof}


\ifx\inmain\undefined
\bibliographystyle{myamsalpha2}
\bibliography{References}
\fi


\section{Proofs of the main results}\label{section proofs}

In this section we provide proofs of the results advertised in the introduction to the paper.

We begin in \ref{subsection 1aff for well stab} by proving the positive direction of (the spectral version of) theorems \ref{theorem main} and \ref{theorem main affineness}, as well as theorem \ref{theorem main twisted}. We deduce this from the fact that every well stabilized Noetherian algebraic stack with $0$-affine and separated diagonal is controlled (theorem \ref{theorem controlled}). We include here also variants of this assertion which work without Noetherianity and without separatedness of the diagonal.

We then complete the proof of theorems \ref{theorem main} and \ref{theorem main affineness} in \ref{subsection non dualizability}. This is obtained as a consequence of the fact that if $k$ is a positive characteristic field then $\QCoh(\B G_{a, k})$ is not dualizable (theorem \ref{theorem non dualizability bga}).  Finally, in \ref{subsection indcoh} we give a proof of theorem \ref{theorem main indcoh} on compact generation of the category of ind-coherent sheaves, and as an application we deduce theorems \ref{theorem main derived} and \ref{theorem main products}.


\subsection{Control for well stabilized stacks} \label{subsection 1aff for well stab}

The goal of this section is to prove the following:

\begin{theorem}\label{theorem controlled}
Let $X$ be a Noetherian algebraic stack with $0$-affine diagonal. Assume that $X$ is well stabilized and has separated diagonal, or that $X$ is very well stabilized. Then $X$ is controlled.
\end{theorem}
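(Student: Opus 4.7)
The plan is to proceed by Noetherian induction on closed substacks of $X$, reducing the global assertion to a local analysis at residual gerbes. For the base case, one must show that the residual gerbe $\mathcal{G}_x$ at a point $x$ is controlled; this is where (very) well-stabilization enters directly, guaranteeing that the stabilizer group at $x$ is of a sufficiently tractable type so that $\QCoh(\mathcal{G}_x)$ has the structural properties bundled into the notion of being controlled. In the first variant of the theorem, separatedness of the diagonal of $X$ pulls back to separatedness of the diagonal of the stabilizer, allowing one to invoke standard results for classifying stacks of separated group schemes; in the second variant, the stronger very-well-stabilized hypothesis supplies the needed control intrinsically, without using separation.

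For the inductive step, I would decompose $X$ as $U \sqcup Z$ with $U$ open and $Z$ a proper closed substack, apply the inductive hypothesis to $Z$, and assemble control on $X$ from control on $U$ and $Z$ by a gluing argument across the boundary. The $0$-affineness of the diagonal is the crucial input here: it identifies the relevant pushforwards of $\QCoh$ along closed immersions and along $\Delta_X$ with module categories over $0$-affine algebra objects, converting the gluing question into a linear-algebraic problem about modules that can be checked levelwise.

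The main obstacle will be the extension across the formal neighborhood of $Z$ inside $X$. One must verify that the controlled structure propagates along the tower of nilpotent thickenings $Z \hookrightarrow Z^{(1)} \hookrightarrow Z^{(2)} \hookrightarrow \ldots$, and that the resulting pro-system assembles, via formal GAGA and Noetherian approximation, back into control on the completion $\widehat{X}_Z$ which can then be glued with $U$. Here Noetherianity is essential, as it underlies both the termination of the induction and the Artin--Rees-type estimates required for $\QCoh$ to behave well under filtered (co)limits of infinitesimal neighborhoods. I would expect to handle the two hypothesis sets in parallel: the separated-diagonal case via an honest \v{C}ech argument along $\Delta_X$, and the very-well-stabilized case via a slightly different reduction that bypasses separation but uses the stronger stability input in exchange.
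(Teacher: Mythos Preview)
Your broad shape---Noetherian induction, a local gerbe analysis, and an open/closed gluing across the formal neighborhood of $Z$---matches the paper's. However, there is a genuine gap in the inductive step as you describe it. You propose residual gerbes at points as the ``base case,'' but knowing that each $\mathcal{G}_x$ is controlled does not by itself produce a nonempty open substack $U \subset X$ that is controlled, and that is precisely what the induction needs: given that every proper closed substack is controlled, one must manufacture a controlled dense open and then glue. The paper obtains this open via a substantial lemma: after passing to the reduction, one uses generic flatness/gerbe results to reduce to a gerbe over an integral affine base, trivializes it after a finite flat base change to $\B G$, and then uses the structure theory of good groups (quasi-affine to $\B\GL_{n,\QQ}$ in characteristic zero, or reduction to $\B\GG_m^n$ via finite flat covers otherwise) to exhibit control over a dense open of the base. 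None of this is visible from the single-point residual gerbe, and ``spreading out'' control from $\mathcal{G}_x$ to an open neighborhood is exactly the nontrivial content.

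Two further points. First, you plan to treat the two hypothesis packages in parallel, but the paper instead reduces the ``well stabilized with separated diagonal'' case to the ``very well stabilized'' case at the outset, via a quasi-finite structure theorem; without such a reduction your parallel \v{C}ech-along-$\Delta_X$ argument in the separated case is not fleshed out and it is unclear what would replace the group-theoretic input. Second, the gluing across the formal neighborhood of $Z$ requires a specific quantitative input you do not name: one needs $\Gamma(Z,-)$ to have bounded cohomological amplitude (supplied by Hall--Rydh for very well stabilized $Z$), which is what makes the Bar-resolution estimates over the thickenings $Z_n$ uniform in $n$ and allows the formal completion functor to have bounded amplitude. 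Your appeal to Artin--Rees and $0$-affineness of $\Delta_X$ is too coarse here; the actual mechanism is a retract-of-a-finite-partial-totalization argument that hinges on that cohomological bound.
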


As a consequence of theorem \ref{theorem controlled}, we obtain the following:

\begin{corollary}\label{corollary consequences controlled}
Let $X$ be a Noetherian algebraic stack with $0$-affine diagonal. Assume that $X$ is well stabilized and has separated diagonal, or that $X$ is very well stabilized. Then:
\begin{enumerate}[\normalfont \indent (1)]
\item $\QCoh(X)$ is dualizable, and its dual is equivalent to $\QCoh(X)$.
\item $X$ is $1$-affine.
\item Let $\Ccal$ be a dualizable object of $\twoQCoh(X)$. Then $\Gamma(X, \Ccal)$ is dualizable.
\item Products in $\QCoh(X)$ have bounded cohomological amplitude.
\end{enumerate}
\end{corollary}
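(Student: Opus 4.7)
The strategy is to apply Theorem \ref{theorem controlled} to conclude that $X$ is controlled under either hypothesis, and then invoke each of (1)--(4) as a known structural consequence of control. In this sense the corollary is essentially a packaging statement whose substantive content has already been absorbed into Theorem \ref{theorem controlled}; the work in the corollary is to trace through the definitional apparatus of controlled stacks set up earlier in the paper.

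Concretely, I would proceed item by item. For (1), the self-duality of $\QCoh(X)$ is built into the formalism of controlled stacks: the duality pairing comes from pushforward along the diagonal (which is $0$-affine by hypothesis), and control supplies the continuity needed to promote this pairing to a genuine duality in the category of presentable stable categories. For (2), $1$-affineness follows from the combination of $0$-affineness of the diagonal and the duality from (1), which together suffice to show that $\twoQCoh(X)$ is equivalent to modules over $\QCoh(X)$; the control hypothesis is precisely what makes the descent step in this identification go through. For (3), once (2) is known, dualizable objects of $\twoQCoh(X)$ correspond to dualizable $\QCoh(X)$-modules, and since $\QCoh(X)$ is itself dualizable as a presentable stable category by (1), any such dualizable module is automatically dualizable in the ambient category. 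For (4), the bounded cohomological amplitude of products is one of the quantitative features encoded in control: the controlled resolution of $\QCoh(X)$ provides a uniform $t$-amplitude bound on the product functor, so that products cannot drift arbitrarily far below zero.

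The main obstacle I anticipate lies in (3): transferring dualizability from a module over $\QCoh(X)$ to the ambient category requires a compatibility between the module-level duality and the self-duality of $\QCoh(X)$ from (1), and this must be verified using the explicit shape of the pushforward pairing rather than purely formal properties. Items (1), (2), and (4) should then fall out directly from the construction of the controlled structure, so that the entire corollary amounts to citing the appropriate propositions from the Controlled portion of the paper, with (3) as the only step requiring any genuine additional bookkeeping.
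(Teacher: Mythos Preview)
Your overall strategy---apply Theorem~\ref{theorem controlled} to get control, then read off (1)--(4) as consequences---matches the paper's. However, your logical ordering of the four items is essentially inverted relative to the paper, and one of your implications is doubtful.

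The paper proceeds as follows: (2) and (4) are deduced \emph{directly} from control via cited propositions (\ref{proposition controlled is 1affine} and \ref{proposition bound products}); (3) is then deduced from (2) via a corollary about $1$-affineness; and finally the dualizability half of (1) is obtained as the special case $\Ccal = \QCoh_X$ of (3). The self-duality portion of (1) is handled by a short explicit computation: $1$-affineness gives $\QCoh(X) = \QCoh_{\co}(X)$, and then one dualizes the colimit presentation of $\QCoh_{\co}(X)$ termwise. So the flow is $(2),(4) \Rightarrow (3) \Rightarrow (1)$, not $(1) \Rightarrow (2) \Rightarrow (3)$ as you propose.

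Your route has a soft spot at the step ``duality of $\QCoh(X)$ plus $0$-affine diagonal $\Rightarrow$ $1$-affine.'' Dualizability of $\QCoh(X)$ is strictly weaker than $1$-affineness in general, and the paper does not supply a result of that shape; what it supplies is ``controlled $\Rightarrow$ $1$-affine'' directly. Your argument for (1) via the diagonal pairing is plausible in spirit but is not the mechanism the paper uses, and you would need to justify continuity of the unit/counit independently of (3). Conversely, the step you flag as the main obstacle---transferring dualizability in (3) from $\QCoh(X)$-modules to the ambient category---is in fact routine once (2) and the dualizability of $\QCoh(X)$ are known; the paper dispatches it by citation. In short: your packaging instinct is right, but reorder the dependencies to match $(2),(4) \to (3) \to (1)$, and note that the only genuinely non-formal line in the paper's proof is the explicit identification of $\QCoh(X)^\vee$ via $\QCoh_{\co}(X)$.
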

\begin{proof}
Item (2) and (4) follow by combining theorem \ref{theorem controlled} with propositions \ref{proposition controlled is 1affine} and \ref{proposition bound products}. Item (3) then follows from (2) together with corollary \ref{corollary twisted 1}. It remains to establish (1). The fact that $\QCoh(X)$ is dualizable follows from (3), so we only need to show that it is self dual. By proposition \ref{proposition equivalences 1affine} we have an equivalence $\QCoh(X) = \QCoh_{\co}(X)$, so it suffices to show that the dual of $\QCoh_{\co}(X)$ is $\QCoh(X)$. Indeed, we have
\[
\QCoh_\co(X)^\vee = (\colim \QCoh(S))^\vee = \lim \QCoh(S)^\vee = \lim \QCoh(S) = \QCoh(X) 
\]
where the (co)limits above are taken over all affine schemes $S$ equipped with a map to $X$.
\end{proof}

\begin{remark}\label{remark descendability}
Let $X$ be a qcqs algebraic stack with $0$-affine diagonal and let $p: U \rightarrow X$ be an affine cover.  Then the condition that $X$ be $1$-affine is closely related to the condition that $p_*\Ocal_U$ be a descendable algebra in $\QCoh(X)$, which was studied in \cite{HallRemarks} and \cite{Jiang}. The latter condition is independent of the choice of $p$, and implies $1$-affineness. The converse however does not hold: for instance, the classifying stack of a finite group over a field is $1$-affine, but does not have descendability for covers in general.

The difference between these two conditions disappears when $X$ is Noetherian and very well stabilized: in this case, the fact that $\Gamma(X, -)$ has bounded cohomological amplitude and $X$ is controlled may be used to show that $\Ocal_X$ is a retract of $\colim_{\Delta^\op_{\leq d}} \Bar_p(\Ocal_X)_\bullet|_{\Delta^\op_{\leq d}}$ for some $d$, which implies descendability of $p_*\Ocal_U$. Conversely, one may show that if $X$ is Noetherian with separated diagonal and $p_*\Ocal_U$ is descendable, then $X$ is very well stabilized. To see this, one uses corollary \ref{corollary not 1 affine} below to show first that $X$ is well stabilized, which implies by theorem \ref{theorem controlled} that $p^?$ has bounded cohomological amplitude, from which one deduces that $\Gamma(X, -)$ has bounded cohomological amplitude using that $\Ocal_X$ belongs to the closure of $p_*\Ocal_U$ under finite limits, tensor products, and retracts (because of descendability).

\end{remark}

The conditions of theorem \ref{theorem controlled} may be weakened in a variety of situations. In particular, the conclusions of corollary \ref{corollary consequences controlled} extend to these cases as well.

\begin{corollary}
Let $X$ be a qcqs algebraic stack with $0$-affine diagonal over a field of characteristic zero. Then $X$ is controlled.
\end{corollary}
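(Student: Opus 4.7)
The plan is to reduce to theorem \ref{theorem controlled} via Noetherian approximation, after verifying that finite type algebraic stacks with $0$-affine diagonal in characteristic zero are very well stabilized.

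Step one (Noetherian approximation): Every qcqs algebraic stack with $0$-affine diagonal over $k$ admits a presentation as a cofiltered limit $X \simeq \lim X_\alpha$ of finite type algebraic stacks over $k$ with $0$-affine diagonals, along affine transition maps. I would show that controlledness is inherited by such limits, reducing the problem to the case where $X$ is of finite type (in particular Noetherian, since $k$ is a field). Alternatively, the introductory paragraph of this section alludes to non-Noetherian variants of theorem \ref{theorem controlled}, which could be invoked directly once very well-stabilizedness is available.

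Step two (characteristic zero): In the finite type setting, the stabilizers of $X$ are affine algebraic groups of finite type. For such a group $G$ over a characteristic zero field, I would prove that $\QCoh(\B G)$ has finite cohomological amplitude by reducing to the identity component $G^\circ$, splitting it as $U \rtimes L$ with $U$ its unipotent radical and $L$ a Levi, observing that $L$ is reductive (so $\B L$ has cohomological dimension $0$), and using that $U$ is an iterated extension of $\dim U$ copies of $G_a$, each of which contributes amplitude at most $1$. The last point comes from the fact that in characteristic zero $H^*(\B G_a, k)$ is computed by Chevalley--Eilenberg cohomology of the one-dimensional abelian Lie algebra and equals $k \oplus k[-1]$. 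Applied uniformly to stabilizers of $X$, this yields the bounds defining very well-stabilizedness, at which point theorem \ref{theorem controlled} applies.

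The main obstacle is step one: showing that controlledness descends through Noetherian approximation, or equivalently establishing a clean non-Noetherian version of theorem \ref{theorem controlled}. This requires tracking how the controlling pro-objects and cohomological bounds behave under cofiltered limits along affine morphisms. By contrast, the characteristic zero input in step two is essentially formal once the standard structure theory of affine algebraic groups and the vanishing of higher cohomology for reductive groups are invoked.
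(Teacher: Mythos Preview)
Your overall strategy---approximate by a Noetherian stack and then apply theorem \ref{theorem controlled}---matches the paper's, but you are making Step one much harder than it needs to be, and the obstacle you flag is not a real one.

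You propose to write $X = \lim X_\alpha$ and then show that controlledness is inherited by such a limit, tracking cohomological bounds through the inverse system. The paper bypasses this entirely: once you have the approximation, the projection $X \to X_\alpha$ is an affine morphism for any single index $\alpha$, and proposition \ref{proposition quasi-affine over controlled} says directly that a stack with a (quasi-)affine morphism to a controlled stack is itself controlled. No limit argument is required. Concretely, the paper first reduces to $X$ classical (proposition \ref{proposition classical controlled}), then invokes Rydh's approximation to produce an affine morphism from $X$ to a classical stack $Y$ of finite type over $\QQ$, and uses \cite{HallRydhGroups} theorem 2.8 to arrange that $Y$ has affine stabilizers. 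Then $Y$ is Noetherian with $0$-affine diagonal and, being in characteristic zero, is (very) well stabilized, so theorem \ref{theorem controlled} gives that $Y$ is controlled; proposition \ref{proposition quasi-affine over controlled} finishes.

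Two smaller remarks. First, you approximate over $k$; the paper approximates over $\QQ$, which is harmless but slightly cleaner. Second, your Step two sketch (Levi decomposition, filtering the unipotent radical by copies of $\GG_a$, Chevalley--Eilenberg) is a reasonable way to see that characteristic-zero stabilizers give bounded cohomological dimension, but the paper does not redo this here: the fact that Noetherian stacks with affine stabilizers in characteristic zero are (very) well stabilized is taken as already established, so no group-theoretic work appears in this proof.
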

\begin{proof}
By proposition \ref{proposition classical controlled} we may reduce to the case when $X$ is classical. The desired result then follows by combining proposition \ref{proposition quasi-affine over controlled} with \cite{RydhApproximation} theorem A (which guarantees that $X$ admits an affine morphism to a classical qcqs algebraic stack $Y$ of finite type over $\QQ$) and \cite{HallRydhGroups} theorem 2.8 (which guarantees that $Y$ may be taken to have affine stabilizers).
\end{proof}

\begin{corollary}\label{corollary can be approximated}
Let $X$ be a qcqs algebraic stack with $0$-affine diagonal. Suppose that one of the following conditions is satisfied:
\begin{enumerate}[\normalfont \indent (i)]
\item For every geometric field valued point $x: \Spec(k) \rightarrow X$, the identity component of the stabilizer of $X$ at $x$ is of multiplicative type and its amount of connected components is invertible in $k$.
\item For every geometric field valued point $x: \Spec(k) \rightarrow X$ the reduction of the identity component of the stabilizer of $X$ at $x$ is a torus, and $X$ has separated diagonal.
\end{enumerate}
Then $X$ is controlled.
\end{corollary}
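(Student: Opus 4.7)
The plan is to reduce both cases to Theorem \ref{theorem controlled} by checking that, after a suitable approximation, $X$ satisfies one of the two stabilization hypotheses there.

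First I would approximate in order to achieve Noetherianity. Following the model of the previous corollary, combine \cite{RydhApproximation} with proposition \ref{proposition quasi-affine over controlled} to reduce to the case where $X$ is of finite type over $\ZZ$. The hypotheses pass to the approximating stack because an affine relative morphism induces a surjection of stabilizer groups with affine kernel, and the multiplicative-type (respectively, toral-up-to-reduction) condition descends along such quotients; separatedness of the diagonal in case (ii) is likewise preserved. One has to be careful to argue that approximation can be arranged so that the invertibility-of-the-component-count hypothesis in (i) passes to the residue fields of the approximation, which should reduce to standard generic flatness and spreading-out arguments.

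Next I would verify that, on the Noetherian approximation, case (i) gives the very well stabilized condition and case (ii) gives the well stabilized condition. Under (i) the stabilizers are extensions of a finite étale group of order invertible in the residue characteristic by a group of multiplicative type, so their representation categories have cohomological amplitude zero; I expect this to imply very well stabilization. Under (ii), with the identity component only toral after passing to the reduction, one obtains the weaker cohomological bounds defining well stabilized, while the separated-diagonal hypothesis supplies the other half of the first alternative in Theorem \ref{theorem controlled}. Applying the theorem in the appropriate alternative then concludes the argument.

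The main obstacle will be the second step, specifically globalizing the pointwise stabilizer information into the (very) well stabilized condition, which presumably requires uniform cohomological control across all geometric points rather than just pointwise statements. In case (i) this should follow from rigidity and representability results for groups of multiplicative type under the invertibility hypothesis, together with a smooth slice argument around each point. Case (ii) is the subtler of the two, since the possible non-reducedness of the connected stabilizer, which is only controlled up to a torus, forces one to exploit the separated-diagonal hypothesis in an essential way to rule out pathological stabilizer deformations.
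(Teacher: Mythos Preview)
Your overall strategy---approximate by Noetherian stacks via Rydh and then invoke Theorem~\ref{theorem controlled}---is the paper's strategy as well. The gap is in the mechanism you propose for transferring conditions (i) and (ii) to the approximating stacks $X_\lambda$, which is the actual content of the argument.

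You assert that an affine morphism $X \to X_\lambda$ induces a \emph{surjection} of stabilizer groups with affine kernel. The opposite is true: since $X \to X_\lambda$ is affine (hence representable and separated), the induced map on stabilizers at any geometric point is a \emph{closed immersion}. Thus the stabilizers of $X_\lambda$ can be strictly larger than those of $X$, and neither the multiplicative-type condition nor the toral-up-to-reduction condition passes upward along closed subgroup inclusions. (Take the universal torsor $\Spec(k) \to \B\GG_{a,k}$: the source has trivial stabilizers while the target does not.) So nothing about $X$ formally constrains the stabilizers of $X_\lambda$, and your descent claim fails.

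The paper circumvents this as follows. For case (i) the entire step is already packaged in \cite{HallRydhGroups} theorem 2.8, which produces the approximation together with the stabilizer condition on the $X_\lambda$. For case (ii) one argues by constructibility: after fixing an affine cover $U_{\lambda_0} \to X_{\lambda_0}$ and pulling back, the locus in $U_\lambda$ where the reduced identity component of the geometric stabilizer is a torus is shown to be constructible (lemma~\ref{lemma constructibility}); since its inverse limit over $\lambda$ is all of $U$ by the hypothesis on $X$, \cite{EGAIV} corollary 8.3.4 forces it to be all of $U_\lambda$ for large $\lambda$. Separatedness of the diagonal for large $\lambda$ is a separate approximation result (\cite{RydhApprox1} theorem C), not a formal consequence as you suggest. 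Finally, your emphasis is inverted: the ``second step'' you flag as the main obstacle is immediate, since for Noetherian $X_\lambda$ conditions (i) and (ii) are precisely the hypotheses of Theorem~\ref{theorem controlled}; there is no further globalization to carry out.
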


\begin{lemma}\label{lemma constructibility}
Let $S$ be a classical Noetherian affine scheme, and let $G$ be a classical group algebraic space over $S$ with affine fibers. Let $F$ be the set of points $x$ in $S$ with the property that the reduced identity component of the geometric fiber of $G$ at $x$ is a torus. Then $F$ is constructible.
\end{lemma}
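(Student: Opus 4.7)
The plan is to use Noetherian induction combined with generic flatness, generic smoothness, and rigidity of tori. Constructibility of $F \subseteq S$ may be checked by finding, for every integral closed subscheme $Z \subseteq S$, a dense open of $Z$ entirely in $F$ or entirely in $S \setminus F$; and since this property is preserved under finite surjective base change, I would reduce to the case $S$ integral Noetherian, being free to replace $S$ by a finite cover.

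Working at the generic point $\eta$, I would first replace $S$ by a finite purely inseparable cover so that the residue field at $\eta$ becomes perfect, and then by a finite \'etale cover so that the identity component of $G_\eta$ is geometrically connected. Over a perfect field the reduced part of a finite-type group scheme is a smooth subgroup scheme, so $H_\eta := (G_\eta^0)_\mathrm{red}$ would be a smooth geometrically connected closed subgroup of $G_\eta$ with $H_{\bar\eta} = (G_{\bar\eta}^0)_\mathrm{red}$. I would then take $H \subseteq G$ to be its scheme-theoretic closure and use generic flatness together with generic smoothness to arrange, after shrinking $S$, that $H \to S$ is a flat smooth closed subgroup algebraic space of $G$ with geometrically connected fibers.

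The technical heart of the argument is to check that, after possibly further shrinking, $H_{\bar s} = (G_{\bar s}^0)_\mathrm{red}$ for all $s \in S$. The inclusion $H_{\bar s} \subseteq (G_{\bar s}^0)_\mathrm{red}$ is immediate from smoothness and connectedness of $H_{\bar s}$. For equality I would compare fiber dimensions: the two sides have the same dimension at $\eta$ by construction, the dimension of $H_{\bar s}$ is locally constant on $S$ by flatness of $H$, and the dimension of $(G_{\bar s}^0)_\mathrm{red}$ is constructible on $S$ for affine group algebraic spaces of finite presentation, hence constant on a dense open.

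The conclusion would then follow from rigidity of tori: the locus in $S$ over which a smooth affine group scheme with connected fibers has torus fibers is open. So either $H_\eta$ is a torus, and a dense open of $S$ lies in $F$, or it is not, in which case a dense open of $S$ is disjoint from $F$. The main obstacle I foresee is the dimension-and-reducedness comparison in the middle step, since taking reduced structure does not in general commute with base change; it is here that the preliminary reductions to a perfect residue field and a geometrically connected identity component are essential.
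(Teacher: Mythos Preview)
Your strategy---Noetherian induction to reduce to $S$ integral, spreading out the reduced identity component from the generic fiber, then invoking rigidity of tori---is the same as the paper's; its proof simply defers to the arguments of its earlier structure proposition for $G$ (the one invoked in Lemma~\ref{lemma locally on BG}), which establishes exactly this generic dichotomy.

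There is, however, one step that fails as written. You cannot make $k(\eta)$ perfect by a \emph{finite} purely inseparable extension: if for instance $k(\eta)\cong\mathbb{F}_p(t)$, then its perfection $\mathbb{F}_p(t^{1/p^\infty})$ has infinite degree. What you actually need is weaker and is true: for any group scheme $G_0$ of finite type over a field $k$, there is a finite purely inseparable extension $k'/k$ such that $((G_0)_{k'})_{\mathrm{red}}$ is geometrically reduced, hence a smooth closed subgroup scheme (see SGA3, Exp.~VI$_{\mathrm{A}}$, or Conrad--Gabber--Prasad, \emph{Pseudo-reductive Groups}, Lemma~C.4.1). With this correction your construction of $H_\eta$ and the subsequent spreading-out and dimension comparison go through as you outline. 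One further simplification: the finite \'etale cover ``so that $G_\eta^0$ is geometrically connected'' is unnecessary, since the identity component of a finite-type group scheme over any field contains the rational point $e$ and is therefore automatically geometrically connected.
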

\begin{proof}
By Noetherian induction it suffices to show that if $S$ is nonempty then $S$ contains a nonempty open subset $U$ which is contained either in $F$ or in the complement of $F$. It is enough to show that this is the case for the reduction of an irreducible component of $S$; in this way, we may reduce to the case when $S$ is integral. The desired assertion now follows from the arguments in the proof of proposition \ref{proposition structure G}.
\end{proof}

\begin{proof}[Proof of corollary \ref{corollary can be approximated}]
By proposition \ref{proposition classical controlled} we may reduce to the case when $X$ is classical. Using  \cite{RydhApproximation} theorem A we may write $X$ as an inverse limit of qcqs Noetherian algebraic stacks $X_\lambda$ along affine morphisms. Using \cite{HallRydhGroups} theorem 2.8 we may assume that $X_\lambda$ has $0$-affine diagonal for all $\lambda$.  By  proposition \ref{proposition quasi-affine over controlled} the result will follow if we show that $X_\lambda$ satisfies conditions (i) or (ii) for $\lambda$ large enough.

The case when $X$ satisfies condition (i) is given by \cite{HallRydhGroups} theorem 2.8. We now address the case when $X$ satisfies condition (ii). The fact that $X_\lambda$ has separated diagonal for $\lambda$ large enough is given by \cite{RydhApprox1} theorem C. Pick an index $\lambda_0$ and an affine cover $p_{\lambda_0}: U_{\lambda_0} \rightarrow X_{\lambda_0}$. We may assume without loss of generality that $\lambda_0$ is initial, so that we have base changes $p_\lambda: U_\lambda \rightarrow X_\lambda$ and $p: U \rightarrow X$. For each $\lambda$ let $G_\lambda$ be the classical group algebraic space over $U_\lambda$ underlying $U_\lambda \times_{X_\lambda \times X_\lambda} X_\lambda$, and let $F_\lambda$ be the subset $U_\lambda$ consisting of those points $x$ for which the reduced identity component of the geometric fiber of $G_\lambda$ at $x$ is a torus. We wish to show that $F_\lambda = U_\lambda$ for $\lambda$ large enough. This follows from lemma \ref{lemma constructibility}, in light of \cite{EGAIV} corollary 8.3.4.
\end{proof}

\begin{corollary}
Let $X$ be a qcqs algebraic stack with $0$-affine diagonal. Then $X$ is controlled in any of the following cases:
\begin{enumerate}[\normalfont \indent (i)]
\item $X$ is a qcqs algebraic space.
\item $X$ is a qcqs Deligne-Mumford stack with separated diagonal.
\item $X$ is a very well stabilized qcqs Deligne-Mumford stack with $0$-affine diagonal.
\item $X$ is a well stabilized qcqs algebraic stack with $0$-affine and separated diagonal over a field of positive characteristic.
\item $X$ is a very well stabilized qcqs algebraic stack with $0$-affine diagonal over a field of positive characteristic.
\end{enumerate}
\end{corollary}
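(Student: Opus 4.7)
The plan is to handle cases (i) and (ii) by direct application of corollary \ref{corollary can be approximated}, and cases (iii)--(v) by Noetherian approximation combined with theorem \ref{theorem controlled}.

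For (i), since $X$ is an algebraic space every geometric stabilizer is trivial; its identity component is then the trivial group scheme, which is of multiplicative type, and the full stabilizer has one connected component, invertible in any field, so condition (i) of corollary \ref{corollary can be approximated} is satisfied. For (ii), each geometric stabilizer of a Deligne--Mumford stack is finite, so the reduction of its identity component is trivial, in particular the trivial torus; combined with the separated diagonal hypothesis this gives condition (ii) of corollary \ref{corollary can be approximated}.

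For (iii)--(v), I would follow the blueprint of the proof of corollary \ref{corollary can be approximated}. First, by proposition \ref{proposition classical controlled} reduce to the case when $X$ is classical. Using \cite{RydhApproximation} theorem A together with \cite{HallRydhGroups} theorem 2.8, write $X = \lim X_\lambda$ as a cofiltered limit along affine transition maps of qcqs Noetherian algebraic stacks with $0$-affine diagonal; in cases (iv) and (v) we work relative to the base field. For case (iv) I also invoke \cite{RydhApprox1} theorem C to arrange that $X_\lambda$ has separated diagonal for $\lambda$ sufficiently large. By proposition \ref{proposition quasi-affine over controlled} it then suffices to show that the appropriate (very) well stabilization hypothesis descends to $X_\lambda$ for some $\lambda$, at which point theorem \ref{theorem controlled} yields controlledness of $X_\lambda$ and hence of $X$. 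The Deligne--Mumford hypothesis in (iii) is automatically inherited by the approximants since being Deligne--Mumford is local on source and target under affine maps.

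The main obstacle is the last step: showing that well stabilization (respectively very well stabilization) of $X$ implies the same property on some $X_\lambda$. Following the strategy used in the proof of corollary \ref{corollary can be approximated}, I would pick an initial index $\lambda_0$ and an affine cover $p_{\lambda_0} : U_{\lambda_0} \to X_{\lambda_0}$, consider the base-changed covers $p_\lambda : U_\lambda \to X_\lambda$ and $p : U \to X$, and identify the subset $F_\lambda \subseteq U_\lambda$ of points at which the geometric stabilizer satisfies the relevant stabilization condition. The key technical point is to show that $F_\lambda$ is constructible, in analogy with lemma \ref{lemma constructibility}; once this is in hand, \cite{EGAIV} corollary 8.3.4 will imply that $F_\lambda = U_\lambda$ for $\lambda$ large enough. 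Establishing this constructibility is the most delicate step: in (iii) the stabilizers are $0$-dimensional and everything should reduce to an étale-local statement, while in (iv) and (v) one must analyze the $\infty$-categorical data underlying well and very well stabilization for families of group schemes of finite type in positive characteristic, paying particular attention to the behavior of the unipotent radical and Frobenius.
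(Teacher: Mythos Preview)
Your handling of cases (i) and (ii) is correct and matches the paper. For (iii)--(v), however, you have taken an unnecessary detour and left a genuine gap.

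The paper's proof is one line: \emph{specialize corollary \ref{corollary can be approximated}}. The point is that the hypotheses in each of (iii)--(v) already place $X$ itself under one of the two conditions of that corollary, so no further approximation work is needed---the approximation is handled internally there. Concretely: in (iii) the stabilizers are finite \'etale and the very-well-stabilized hypothesis forces their orders to be invertible, so condition (i) of corollary \ref{corollary can be approximated} holds; in (iv), over a field of positive characteristic the well-stabilized condition forces the reduced identity component of each geometric stabilizer to be a torus, so together with the separated diagonal condition (ii) holds; in (v), very-well-stabilized in positive characteristic forces each geometric stabilizer to have identity component of multiplicative type with component group of order prime to the characteristic (Nagata), so condition (i) holds. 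Thus all five cases are immediate.

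Your proposed route for (iii)--(v) instead reproves the approximation argument from scratch and, as you acknowledge, leaves unresolved the constructibility of the (very) well-stabilized locus on the approximants $U_\lambda$. That is a real gap, not a routine verification. There is also an error in your treatment of (iii): the Deligne--Mumford property does \emph{not} descend along an affine morphism $X \to X_\lambda$; an affine map from a Deligne--Mumford stack can land in a stack with positive-dimensional stabilizers, so the approximants $X_\lambda$ need not be Deligne--Mumford. The fix is not to repair these arguments but to abandon them: simply check that the stabilizer hypotheses of corollary \ref{corollary can be approximated} are met by $X$ directly.
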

\begin{proof}
Specialize corollary \ref{corollary can be approximated}.
\end{proof}

The remainder of this section is devoted to the proof of theorem \ref{theorem controlled}.

\begin{lemma} \label{lemma bounds red}
Let $X$ be a classical qcqs algebraic stack with $0$-affine diagonal and let $Z$ be a classical closed substack of $X$ whose ideal sheaf is finitely generated and nilpotent.
\begin{enumerate}[\normalfont (1)]
\item If the functor $\Gamma(Z, -): \QCoh(Z) \rightarrow \Mod_\SS$ 
has cohomological amplitude bounded by $d$ then the functor $\Gamma(X, -): \QCoh(X) \rightarrow \Mod_\SS$ has cohomological amplitude bounded by $d$.
\item Let $p: U \rightarrow X$ be an affine cover, and let $p_Z: U_Z \rightarrow Z$ be its base change. If $(p_Z)^?$ has cohomological amplitude bounded by $d$ then  $p^?$ has cohomological amplitude bounded by $d$.
\item Let $U$ be a qcqs algebraic space and let $p: U \rightarrow X$ be a flat almost finitely presented morphism. Denote by $p_Z: U_Z \rightarrow Z$ its base change to $Z$. If $(p_Z)_*(p_Z)^?$ has cohomological amplitude bounded by $d$ then $p_*p^?$ has cohomological amplitude bounded by $d$. 
\item If $Z$ is controlled then $X$ is controlled.
\end{enumerate} 
\end{lemma}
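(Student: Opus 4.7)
The common engine is the finite $\mathcal{I}$-adic filtration, where $\mathcal{I}$ denotes the ideal sheaf of $Z$ in $X$. Since $\mathcal{I}$ is finitely generated and nilpotent, we may fix $n$ with $\mathcal{I}^n = 0$. For any $\mathcal{F}$ in the heart of $\QCoh(X)$ there is a finite filtration
\[
\mathcal{F} \supset \mathcal{I} \cdot \mathcal{F} \supset \mathcal{I}^2 \cdot \mathcal{F} \supset \cdots \supset \mathcal{I}^n \cdot \mathcal{F} = 0,
\]
whose successive subquotients are annihilated by $\mathcal{I}$ and hence of the form $i_* \mathcal{G}$ for some $\mathcal{G}$ in the heart of $\QCoh(Z)$, where $i : Z \hookrightarrow X$ is the closed immersion. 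Since $i$ is affine, $i_*$ is $t$-exact, as is its base change $(i_U)_*$. In each of the four parts the functor in question is exact, so a cohomological amplitude bound may be checked on the heart of $\QCoh(X)$ and then, via the fiber sequences of the filtration, reduced to the case $\mathcal{F} = i_* \mathcal{G}$.

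For (1) this case reduces to $\Gamma(X, i_*\mathcal{G}) \simeq \Gamma(Z, \mathcal{G})$, which has amplitude $\leq d$ by hypothesis. For (2) and (3) the only nontrivial ingredient is the base change identity
\[
p^? \circ i_* \simeq (i_U)_* \circ (p_Z)^?,
\]
which should hold by affine base change for $p^?$ along the closed immersion $i$. Granting this, (2) is immediate by $t$-exactness of $(i_U)_*$, while (3) follows by post-composing with $p_*$ to obtain $p_* p^? \circ i_* \simeq i_* \circ (p_Z)_*(p_Z)^?$ (using $p \circ i_U = i \circ p_Z$) and then invoking $t$-exactness of $i_*$. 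Part (4) follows by specializing (1) and (2) to an affine cover $p : U \to X$ whose base change $p_Z : U_Z \to Z$ exhibits the controlled condition on $Z$, transferring the relevant amplitude bounds from $Z$ to $X$.

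The main obstacle I anticipate is the base change identity displayed above for the author's specific construction of $p^?$; this is the only place in the argument that is not pure filtration-bookkeeping, and its validity depends on how $p^?$ has been defined in earlier parts of the paper. Everything else is a straightforward devissage along the finite $\mathcal{I}$-adic filtration, together with the $t$-exactness of pushforward along the closed immersions $i$ and $i_U$.
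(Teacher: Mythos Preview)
Your argument for items (1)--(3) is exactly the paper's: the author writes only that ``items (1) through (3) follow readily from the fact that every connective object of $\QCoh(X)$ admits a finite filtration with quotients in the image of the pushforward functor $\QCoh(Z)^{\cn} \rightarrow \QCoh(X)^{\cn}$,'' which is precisely your $\mathcal{I}$-adic filtration. The base change identity $p^? \circ i_* \simeq (i_U)_* \circ (p_Z)^?$ that you flag is indeed the only nontrivial input for (2) and (3); the paper treats it as implicit, so your caution is appropriate but there is no divergence in strategy.

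For item (4) the paper does \emph{not} deduce the claim from (1) and (2) as you propose; instead it invokes an external lemma (``lemma open and closed'') specialized to the case where the open complement $Y$ is empty. Whether your route is a valid alternative depends on the precise definition of ``controlled,'' which lives in another file. From the way the notion is used elsewhere in this section (e.g.\ in the proof of lemma~\ref{lemma open and closed part 2}, where being controlled is used to conclude that $\Ocal$ is recovered as the geometric realization of the bar complex $\Bar_p(\Ocal)_\bullet$), ``controlled'' appears to encode a convergence or totalization property of the bar resolution, not merely a pair of amplitude bounds on $\Gamma$ and $p^?$. If so, transferring the bounds via (1) and (2) would not by itself establish that $X$ is controlled; you would still need to show that the relevant bar resolution converges on $X$, which is what the cited external lemma presumably handles. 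So your approach to (4) is at best incomplete without further input on the definition.
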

\begin{proof}
Items (1) through (3) follow readily from the fact that every connective object of $\QCoh(X)$ admits a finite filtration with quotients in the image of the pushforward functor $\QCoh(Z)^\cn \rightarrow \QCoh(X)^\cn$. Finally, item (4) is a specialization of lemma \ref{lemma open and closed} to the case when $Y$ is empty.
\end{proof}

\begin{lemma}\label{lemma open and closed part 2}
Let $X$ be a classical qcqs algebraic stack with $0$-affine diagonal. Let $Z$ be a classical closed subscheme of $X$ with finitely generated ideal sheaf, and let $Y$ be its complement. Assume that $Z$ and $Y$ are controlled and that the functor
\[
\Gamma(Z, -): \QCoh(Z) \rightarrow \Mod_\SS
\]
has bounded cohomological amplitude. Then $X$ is controlled.
\end{lemma}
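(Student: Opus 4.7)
The strategy is to reduce to the nilpotent case handled by lemma \ref{lemma open and closed} via a limit argument over the infinitesimal neighborhoods of $Z$. The hypothesis that $\Gamma(Z,-)$ has bounded cohomological amplitude will be used to keep this limit under control.

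Let $\Ical$ denote the finitely generated ideal sheaf of $Z$ in $X$, and for each $n \geq 1$ let $Z_n \hookrightarrow X$ be the closed subscheme defined by $\Ical^n$. For each $n$, the closed immersion $Z \hookrightarrow Z_n$ has finitely generated nilpotent ideal, so lemma \ref{lemma bounds red}(4) gives that $Z_n$ is controlled, and lemma \ref{lemma bounds red}(1) gives that $\Gamma(Z_n,-)$ has cohomological amplitude bounded by a constant independent of $n$, namely the bound for $\Gamma(Z,-)$.

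With this uniform control in hand, I would analyze $\QCoh(X)$ via the fracture square associated to the open/closed pair $(Y, Z)$: any $\Fcal \in \QCoh(X)$ should sit in a pullback square with remaining vertices $j_* j^* \Fcal$, $\varprojlim_n (i_n)_* i_n^* \Fcal$, and $\varprojlim_n (i_n)_* i_n^* j_* j^* \Fcal$, where $j : Y \to X$ and $i_n : Z_n \to X$ are the natural inclusions. Applying the right adjoint $p^?$ of an affine cover $p : U \to X$, together with base change along this square, would reduce the cohomological amplitude of $p^?\Fcal$ to that of $(p_Y)^?$ applied to $j^*\Fcal$ (bounded because $Y$ is controlled) and of $(p_{Z_n})^?$ applied to $i_n^*\Fcal$ (bounded uniformly in $n$ because each $Z_n$ is controlled). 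The role of the bound on $\Gamma(Z,-)$ is to ensure that the inverse limit over $n$ preserves boundedness, without which completion along $Z$ could increase amplitude arbitrarily.

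The main obstacle will be this limit step: one must show that the inverse system $\{(i_n)_* i_n^* \Fcal\}$ is sufficiently well-behaved — for instance via a Mittag-Leffler-type vanishing of the relevant higher derived limits — so that the uniform stage-by-stage bound propagates to a bound on the limit itself. This is the essential new content beyond the nilpotent version of lemma \ref{lemma open and closed}, and is exactly what the hypothesis on $\Gamma(Z,-)$ is meant to supply.
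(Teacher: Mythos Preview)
Your setup is right: reduce via lemma \ref{lemma open and closed} to bounding the completion $(-)^\wedge_Z$, pass to the infinitesimal thickenings $Z_n$, and invoke lemma \ref{lemma bounds red} to get that each $Z_n$ is controlled with \emph{uniform} bounds on $\Gamma(Z_n,-)$, $(p_{Z_n})^?$, and the bar entries. The paper does exactly this.

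The gap is in the limit step, and the Mittag--Leffler heuristic does not close it. Even granting a uniform bound on each $(i_n)_*(i_n)^*\Fcal$, you cannot conclude that $\lim_n (i_n)_*(i_n)^*\Fcal$ is bounded in $\QCoh(X)$: products in $\QCoh(X)$ are not yet known to have bounded amplitude --- that is essentially what you are trying to prove. Your alternative, applying $p^?$ first and working in $\QCoh(U)$ where products are exact, gives a bound on $p^?(\Fcal^\wedge_Z)$, but lemma \ref{lemma open and closed} asks for a bound on $\Fcal^\wedge_Z$ itself in $\QCoh(X)$, and $p^?$ does not reflect connectivity.

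The paper's resolution is a uniform descendability argument carried out in the product category $\prod_n \QCoh(Z_n)$. One shows, using the bound on $\Gamma(Z_n,-)$ and lemma \ref{lemma functor preserves geometric realization}, that the object $(\Ocal_{Z_n})_n$ is a retract of a \emph{finite} skeleton of the bar simplicial object $(\Bar_{p_{Z_n}}(\Ocal_{Z_n})_\bullet)_n$, with the cutoff $k$ independent of $n$. Tensoring with $((i_n)^*\Fcal)_n$ and pushing forward exhibits $\prod_n (i_n)_*(i_n)^*\Fcal$ as a retract of a finite colimit of objects that can be rewritten, via the projection formula, as $p_*$ of something in $\QCoh(U)$. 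Now the product is routed through $\QCoh(U)$ (where it is t-exact) and through $p_*$ (which has bounded amplitude and commutes with products), and the uniform bounds on the bar entries from lemma \ref{lemma bounds red} finish the job. This is the missing idea: not controlling the tower termwise, but producing a single finite presentation valid simultaneously for all $n$.
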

\begin{proof}
By lemma \ref{lemma open and closed} it is enough to show that the functor
\[
(-)^\wedge_Z : \QCoh(X) \rightarrow \QCoh(X)
\]
has bounded cohomological amplitude. Let $p: U \rightarrow X$ be an affine cover, and denote by $p_Z : U_Z \rightarrow Z$ its base change. Let $d \geq 0$ be such that the functors $p_*$, $(p_Z)_*$ and $(p_Z)^?$ have cohomological amplitude bounded by $d$. By lemma \ref{lemma uniform bound pull push} we may, after enlarging $d$ if necessary, assume that all the entries in $\Bar_{p_Z}(-)_\bullet$ also have cohomological amplitude bounded by $d$. We claim that in this situation $(-)^\wedge_Z$ has cohomological amplitude bounded by $3d+1$.

Let $\Fcal$ be a connective object of $\QCoh(X)$. Let $i_n: Z_n \rightarrow X$ be as in remark \ref{remark formal completion as limit}, so that we have an equivalence $\Fcal^\wedge_Z = \lim (i_n)_* (i_n)^* \Fcal$. Then $\Fcal^\wedge_Z$ sits in an exact sequence
\[
\Fcal^\wedge_Z \rightarrow \textstyle{\prod} (i_n)_* (i_n)^* \Fcal \rightarrow \textstyle{\prod} (i_n)_* (i_n)^* \Fcal
\]
so it will suffice to show that $\prod (i_n)_* (i_n)^* \Fcal$ is $(-3d)$-connective.

For each $n$ denote by $p_{Z_n}: U_{Z_n} \rightarrow Z_n$ the base change of $p$. Let $\Gcal$ be the object of $\prod \QCoh(Z_n)$ with coordinates $\Ocal_{Z_n}$, and let $\Scal_\bullet$ be the simplicial object of $\prod \QCoh(Z_n)$ with coordinates $\Bar_{p_{Z_n}}(\Ocal_{Z_n})_\bullet$. By part (4) of lemma \ref{lemma bounds red} we have that $Z_n$ is controlled for all $n$, and in particular $\Gcal$ is the geometric realization of $\Scal_\bullet$. 

Part (3) of lemma \ref{lemma bounds red} guarantees that $\Scal_\bullet$ factors through $\prod \QCoh(Z_n)_{\geq -d}$. Furthermore, part (1) implies that the functor
\[
\Hom_{\prod \QCoh(Z_n)}(\Gcal, -): \textstyle{\prod} \QCoh(Z_n) \rightarrow \Mod_\SS
\]
has bounded cohomological amplitude.  An application of lemma \ref{lemma functor preserves geometric realization} now shows that $\Hom_{\prod \QCoh(Z_n)}(\Gcal, -)$ preserves the geometric realization of $\Scal_\bullet$. It follows that we have
\[
\Hom_{\prod \QCoh(Z_n)}(\Gcal, \Gcal) = \colim_{k \to \infty} \left(\Hom_{\prod \QCoh(Z_n)}\left(\Gcal, \colim_{\Delta^\op_{\leq k}} \left( \Scal_\bullet|_{\Delta^\op_{\leq k}}\right)\right)\right).
\]
Consequently, there exists $k$ such that the unit endomorphism of $\Gcal$ lifts to a map
\[
\Gcal \rightarrow \colim_{\Delta^\op_{\leq k}}\left( \Scal_\bullet|_{\Delta^\op_{\leq k}}\right),
\]
so that in particular we have that $\Gcal$ is a retract of $ \colim_{\Delta^\op_{\leq k}} \left( \Scal_\bullet|_{\Delta^\op_{\leq k}}\right)$.

Let $\Fcal'$ be the object of $\prod \QCoh(Z_n)$ with coordinates $(i_n)^* \Fcal$. Then tensoring with $\Fcal'$ we deduce that $\Fcal'$ is a retract of 
\[
\colim_{\Delta^\op_{\leq k}}\left( \Scal_\bullet|_{\Delta^\op_{\leq k}} \otimes \Fcal'\right).
\]
Composing with the functor
\[
\textstyle{\prod} \QCoh(Z_n) \xrightarrow{\prod (i_n)_*} \textstyle{\prod} \QCoh(X) \xrightarrow{\prod} \QCoh(X)
\]
we deduce that $\prod (i_n)_* (i_n)^* \Fcal$ is a retract of 
\[
\colim_{\Delta^\op_{\leq k}}\left(\textstyle{\prod} (i_n)_*\left( \Bar_{p_{Z_n}}(\Ocal_{Z_n})_\bullet|_{\Delta^\op_{\leq k}} \otimes (i_n)^* \Fcal\right) \right) .
\]
We may now reduce to showing that the entries of the simplicial object
\[
\textstyle{\prod} (i_n)_*\left( \Bar_{p_{Z_n}}(\Ocal_{Z_n})_\bullet \otimes (i_n)^* \Fcal\right) 
\]
are $(-3d)$-connective. Fix $k \geq 0$, and note that the $k$-th entry of the above simplicial object is given by
\[
\textstyle{\prod} (i_n)_* ((p_{Z_n})_* (p_{Z_n})^? (\Bar_{p_{Z_n}}(\Ocal_{Z_n})_{k-1} )\otimes (i_n)^* \Fcal )
\]
and may be rewritten, thanks to the projection formula, as
\[
\textstyle{\prod} (i_n)_* (p_{Z_n})_* ( (p_{Z_n})^? (\Bar_{p_{Z_n}}(\Ocal_{Z_n})_{k-1}) \otimes (p_{Z_n})^* (i_n)^* \Fcal).
\]
For each $n$ let $\Hcal_n$ be the pushforward along the inclusion $U_{Z_n} \rightarrow U$ of 
\[
 (p_{Z_n})^? (\Bar_{p_{Z_n}}(\Ocal_{Z_n})_{k-1}) \otimes (p_{Z_n})^* (i_n)^* \Fcal
\]
so that our task becomes showing that $\prod p_* \Hcal_n$ is $(-3d)$-connective. This  is equivalent to $p_* \prod \Hcal_n$, and since products in $\QCoh(U)$ are exact and $p_*$ has cohomological amplitude bounded by $d$, we may reduce to showing that $\Hcal_n$ is $(-2d)$-connective for every $n$. Since pushforward along $U_{Z_n} \rightarrow U$ is t-exact, and $(p_{Z_n})^* (i_n)^* \Fcal$ is connective, it suffices to show that $(p_{Z_n})^? \Bar_{p_{Z_n}}(\Ocal_{Z_n})_{k-1}$
is $(-2d)$-connective. We claim that in fact each of $\Bar_{p_{Z_n}}(-)_{k-1}$ and $(p_{Z_n})^?$ have cohomological amplitude bounded by $d$. Indeed, by lemma \ref{lemma bounds red} we may reduce to the case $n = 1$, which was part of our assumptions on $d$.
\end{proof}

\begin{lemma}\label{lemma locally on BG}
Let $S$ be an integral Noetherian affine scheme and let $G$ be an algebraic group over $S$ with affine fibers. Assume that $G$ is good. Then there exists a dense open subscheme $U$ of $S$ with the property that $\B G_U$ is controlled.
\end{lemma}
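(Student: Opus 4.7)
The plan is to shrink $S$ successively in order to place $\B G_U$ within the range of corollary \ref{corollary can be approximated}. Since $S$ is integral Noetherian and the generic fiber of $G$ is affine, by generic flatness and standard spreading-out we may, after replacing $S$ by a dense open, assume that $G$ is flat and affine over $S$. In particular $\B G$ then has affine, and hence $0$-affine and separated, diagonal over the shrunken base.

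Next, using the good hypothesis together with the structural analysis in the proof of proposition \ref{proposition structure G}, we may further shrink $S$ to arrange that $G$ sits in a short exact sequence $1 \to G^0 \to G \to \pi \to 1$ with $\pi$ finite étale over $S$ and $G^0$ a connected group scheme whose reduction $(G^0)_{\red}$ is a torus. After a further shrinkage, $G_{\red}$ will be a group subscheme of $G$, and the inclusion $G_{\red} \hookrightarrow G$ will have a finitely generated nilpotent ideal sheaf (uniformly across $S$, by Noetherianity). This promotes to a closed immersion $\B G_{\red} \hookrightarrow \B G$ with the same properties, so lemma \ref{lemma bounds red}(4) reduces the problem to showing that $\B (G_{\red})_U$ is controlled.

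We have thus reduced to the case $G = G_{\red}$, in which $G^0$ is a torus and $\pi_0(G)$ is a finite étale group scheme over $U$. Every geometric stabilizer of $\B G_U$ is then an extension of a finite group by a torus; in particular its reduced identity component is a torus. Combined with the separated $0$-affine diagonal already arranged, condition (ii) of corollary \ref{corollary can be approximated} applies and concludes that $\B G_U$ is controlled.

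The main obstacle will be the middle step: to apply lemma \ref{lemma bounds red}(4) one must justify, after shrinking, that $G_{\red}$ is indeed a subgroup scheme and that its ideal in $G$ is finitely generated and killed by a fixed power (rather than one that degenerates near special fibers). Over the generic point this is controlled by the good hypothesis and the structure theorem, and the passage to a dense open follows from spreading-out (\cite{EGAIV} corollary 8.3.4 style arguments, as already used in the proof of corollary \ref{corollary can be approximated}); making this uniform is where the integrality and Noetherianity of $S$ enter in an essential way.
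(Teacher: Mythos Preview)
There are two genuine problems with your argument.

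\textbf{Circularity.} You propose to conclude by invoking corollary \ref{corollary can be approximated}. But that corollary is proved by approximating $X$ by Noetherian stacks $X_\lambda$ and then appealing to theorem \ref{theorem controlled} to know that the $X_\lambda$ are controlled. Theorem \ref{theorem controlled} in turn is proved via lemma \ref{lemma controlled on open}, which rests on the present lemma. So your final step assumes what you are trying to prove. The paper's proof avoids this by arguing directly from proposition \ref{proposition structure G}: in case (a) one has a quasi-affine map $\B G_U \to \B\GL_{n,\QQ}$ and uses example \ref{example linearly reductive} with proposition \ref{proposition quasi-affine over controlled}; in case (b) one uses finite flat surjections (lemma \ref{lemma finite flat}) to reduce to $\B\GG_{m,\ZZ}^n$, handled in example \ref{example BGm}. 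None of these ingredients sits logically downstream of the lemma.

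\textbf{The reduction to $G_{\red}$.} Your middle step asserts that the inclusion $G_{\red}\hookrightarrow G$ ``promotes to a closed immersion $\B G_{\red}\hookrightarrow \B G$'' to which lemma \ref{lemma bounds red}(4) applies. This is false: for a closed subgroup $H\subset G$ the map $\B H\to\B G$ has fiber $G/H$, which is a nontrivial (infinitesimal) scheme whenever $H\neq G$. For instance, if $G=\alpha_p$ over a field of characteristic $p$ then $G_{\red}$ is trivial and $\B G_{\red}\to\B G$ is the standard atlas $\Spec k\to\B\alpha_p$, an $\alpha_p$-torsor rather than a closed immersion. What \emph{is} true is that $\B G_{\red}\to\B G$ is a finite flat surjection, so the correct transfer tool is lemma \ref{lemma finite flat}, not lemma \ref{lemma bounds red}(4); this is exactly the mechanism the paper uses in case (b).
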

\begin{proof}
We apply proposition \ref{proposition structure G}, so that one of conditions (a) or (b) applies. If condition (a) applies then we have a quasi-affine map $\B G_U \rightarrow \B \GL_{n, \QQ}$, and hence $\B G_U$ is controlled by a combination of example \ref{example linearly reductive} and proposition \ref{proposition quasi-affine over controlled}.  Assume now that condition (b) applies. Then the projection $\B G_{U'} \rightarrow \B G_U$ is a finite flat surjection, so by lemma \ref{lemma finite flat} it suffices to show that $\B G_{U'}$ is controlled. Fix a subgroup inclusion $\mathbb{G}_{m,U'}^n \rightarrow G_{U'}$ whose quotient is finite flat. Then the induced map $\B \mathbb{G}_{m, U'}^n \rightarrow \B G_{U'}$ is a finite flat surjection. Another application of lemma \ref{lemma finite flat} reduces the problem to showing that $\B\mathbb{G}_{m, U'}^n$ is controlled. Applying proposition \ref{proposition quasi-affine over controlled} to the projection $\B\mathbb{G}_{m, U'}^n \rightarrow \B\mathbb{G}_{m,\mathbb{Z}}^n$  we may further reduce to showing that $\B\mathbb{G}_{m, \mathbb{Z}}^n$ is controlled, which was done in example \ref{example BGm}.
\end{proof}

\begin{lemma}\label{lemma controlled on open}
Let $X$ be a classical Noetherian algebraic stack with $0$-affine diagonal. If $X$ is well stabilized then there exists a dense open substack of $X$ which is controlled.
\end{lemma}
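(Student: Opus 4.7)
The plan is first to reduce to the case where $X$ is integral, and then to use the well stabilized hypothesis together with a generic gerbe structure to present a dense open of $X$ as quasi-affine over $\B G$ for some good algebraic group $G$.

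For the reduction, let $Z_1, \dots, Z_n$ be the irreducible components of $X$ equipped with the reduced structure. Each $Z_i$ is a classical Noetherian algebraic stack with $0$-affine diagonal, and is well stabilized because its geometric stabilizers are closed subgroups of those of $X$. Granting the integral case, choose for each $i$ a dense open substack $W_i \subseteq Z_i$ that is controlled; after removing intersections with the other components we may assume the $W_i$ are pairwise disjoint. Then $\bigsqcup_i W_i$ is a dense open of $X$ that is a finite disjoint union of controlled stacks, and hence is itself controlled.

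Now assume $X$ is integral. Pick a smooth affine surjection $p: U \to X$ with $U$ an integral Noetherian affine scheme, and let $G := U \times_{X \times X} U \to U$ be the relative stabilizer, an affine group algebraic space with affine fibers. Using the well stabilized hypothesis and proposition \ref{proposition structure G}, after shrinking $U$ to a dense affine open we may assume that $G$ is a good algebraic group over $U$. Lemma \ref{lemma locally on BG} then provides a further shrinking of $U$ over which $\B G_U$ is controlled. It remains to produce a dense open $V \subseteq X$ admitting a quasi-affine morphism to $\B G_U$; once this is in hand, proposition \ref{proposition quasi-affine over controlled} concludes that $V$ is controlled.

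The main obstacle is this last step. Its geometric content is the generic gerbe structure of an integral algebraic stack: spreading out from the generic point, one finds a dense open $V \subseteq X$ such that on a suitable chart $V \to V/G$ exhibits $V$ as a $G$-gerbe, yielding a quasi-affine morphism $V \to \B G_{V/G}$. One then further shrinks the base to land in the open of $U$ over which $\B G_U$ is controlled. The delicate point is to align this gerbe structure with the specific $G$ coming from the chosen smooth cover of $X$, which may require a mild additional shrinking; the well stabilized hypothesis supplies just enough control over stabilizers to carry this out.
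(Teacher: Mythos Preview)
Your proof has a genuine gap in the final step, and the issue is structural rather than cosmetic. The group $G$ you define lives over the smooth cover $U$ of $X$, so $\B G_U$ is a stack \emph{over} $U$. To produce a morphism $V \to \B G_U$ from a dense open $V \subseteq X$ you would in particular need a morphism $V \to U$, i.e.\ a section of the smooth cover over $V$; there is no reason for such a section to exist. Your phrase ``$V \to V/G$ exhibits $V$ as a $G$-gerbe'' conflates two different groups: the band of the gerbe lives over the coarse space of $V$, not over $U$, and it is not the same object as your $G$. The sentence ``align this gerbe structure with the specific $G$ coming from the chosen smooth cover'' names the problem but does not solve it.

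The paper's argument avoids this by reversing the order of operations. It first passes (after shrinking) to a gerbe over an affine base $S$, then chooses a finite flat cover $S' \to S$ on which the gerbe acquires a section, so that $X' = S' \times_S X$ is literally $\B G$ for a group $G$ over $S'$. Now $G$ lives over the base of the gerbe, and lemma~\ref{lemma locally on BG} applies directly. One then shrinks $S'$ (and $S$) so that $\B G_{U'}$ is controlled, and descends to $X$ using the finite flat cover via lemma~\ref{lemma finite flat}. The crucial difference is that the paper works with a cover of the \emph{coarse space} and uses finite flat descent for control, whereas you work with a cover of $X$ itself and then have no map back. A secondary point: your reduction to the integral case produces a dense open of $X_{\mathrm{red}}$, not of $X$; you need to invoke part (4) of lemma~\ref{lemma bounds red} (as the paper does) to pass between them.
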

\begin{proof}
By lemma part (4) of lemma \ref{lemma bounds red} it suffices to consider the case when $X$ is reduced. By \cite{stacks-project} proposition 06RC we may, after replacing $X$ by a dense open substack, assume that $X$ is a gerbe over an algebraic space. Applying \cite{stacks-project} proposition 06NH we may further reduce to the case when $X$ is a gerbe over a (classical) affine scheme $S$. Changing base to a dense open subset of $S$ we may assume that the connected components of $S$ are irreducible. Using lemma \ref{lemma union of controlled} we may then reduce to the case when $S$ is irreducible. Applying lemma \ref{lemma bounds red} we may further assume that $S$ is integral.

Let $p: S' \rightarrow S$ be an affine cover with the property that $X$ has a section over $S'$. Changing base to a dense open subscheme of $S$ we may assume that there exists a finite flat surjection $S'' \rightarrow S$ refining $S' \rightarrow S$, whose base change to the generic point of $S$ is the spectrum of a field. Replacing $S'$ by $S''$ we may now assume that $p$ is finite flat and that the generic fiber of $p$ is the spectrum of a field. Changing base to a dense open subscheme of $S$ we may now assume that $S'$ is integral.

Let $X' = S' \times_S X'$. Then $X'$ is of the form $\B G$ for some algebraic group $G$ over $S'$ with affine fibers. By proposition \ref{proposition algebraic space over well stabilized} we have that $X'$ is well stabilized. It now follows from proposition \ref{proposition BG well stabilized iff G good} that $G$ is good. Consequently, applying lemma \ref{lemma locally on BG} we may find a dense open subscheme $U'$ of $S'$ such that $U' \times_S X$ is controlled.

Since $p$ is open we have that $p(U')$ contains the generic point of $S$, and hence the generic fiber of $p|_{U'}$ is equal to the generic fiber of $p$. Consequently, we may find an affine dense open subscheme $U$ of $S$ with the property that $p^{-1}(U)$ is contained in $U'$. Applying proposition \ref{proposition quasi-affine over controlled} we see that $p^{-1}(U) \times_S X$ is controlled. We now have that $U \times_S X$ is controlled by lemma \ref{lemma finite flat}.
\end{proof}

\begin{proof}[Proof of theorem \ref{theorem controlled}]
By proposition \ref{proposition classical controlled} we may assume that $X$ is classical. Combining theorems \ref{theorem quasi-finite structure} and \ref{theorem quasi-finite} we may further reduce to establishing the theorem in the case when $X$ is very well stabilized.

We argue by Noetherian induction. The case when $X$ is empty is clear, so assume that $X$ is nonempty and every proper classical closed substack of $X$ is known to be controlled. By lemma \ref{lemma controlled on open} we may find a nonempty open substack $Y$ of $X$ which is controlled. Let $Z$ be the reduced complement of $Y$, which we know also to be controlled by our inductive hypothesis. By \cite{HallRydhGroups} theorem 2.1 we have  that $\Gamma(Z, -)$ has bounded cohomological amplitude. The fact that $X$ is controlled now follows from lemma \ref{lemma open and closed part 2}.
\end{proof}

  
\subsection{Non dualizability for poorly stabilized stacks}\label{subsection non dualizability}

Our next goal is to prove the following:

\begin{theorem}\label{theorem non dualizability bga}
Let $k$ be a field of positive characteristic. Then the presentable stable categories $\QCoh(\B\GG_{a,k})$ and $\D(\QCoh(\B \GG_{a, k})^\heartsuit)$ are not dualizable.
\end{theorem}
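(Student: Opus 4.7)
The plan is a proof by contradiction. Assume $\Ccal := \QCoh(\B\GG_{a,k})$ is dualizable; I would translate this into a concrete boundedness property of $\Ccal$ and then violate it with a cohomological computation that is only available in positive characteristic.

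The first step is to extract a workable consequence of dualizability. The natural candidate, paralleling item~(4) of corollary~\ref{corollary consequences controlled}, is that the product functors $\prod_I: \Ccal^I \to \Ccal$ have cohomological amplitude bounded by a constant independent of the index set $I$. This implication comes from the fact that a dualizable presentable stable category is a retract of a compactly generated one, in which such a uniform bound on product amplitude is available from the $t$-structure considerations already exploited in this section.

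The second step is to exhibit an explicit family of connective sheaves on $\B\GG_{a,k}$ whose product violates any prescribed amplitude bound. Here the positive characteristic hypothesis is essential: $\GG_{a,k}$ is exhausted by its Frobenius kernels $\alpha_{p^n} = \ker(F^n : \GG_{a,k} \to \GG_{a,k})$, and the cohomology of $\B\alpha_{p^n}$ contains classes in degree growing without bound in $n$. Using the closed immersions $i_n : \B\alpha_{p^n} \hookrightarrow \B\GG_{a,k}$, I would construct connective sheaves $\Fcal_n$ supported on $\B\alpha_{p^n}$---for instance, pushforwards from $\B\alpha_{p^n}$ of suitable dualized structure sheaves, exploiting that each $\B\alpha_{p^n}$ carries a Serre-duality pairing witnessing large-degree cohomology---whose product $\prod_n \Fcal_n$ has negative homotopy of unbounded depth, contradicting the amplitude bound.

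For the second category $\D(\QCoh(\B\GG_{a,k})^\heartsuit)$, the same strategy applies: the heart is the classical category of $\GG_{a,k}$-representations, classical $\Ext$ realizes the unbounded cohomology of $\B\GG_{a,k}$ in the relevant range, and the same Frobenius-indexed family produces the analogous product with unbounded amplitude. Alternatively, one can relate the two categories by the natural realization functor and transfer the contradiction across it. The main obstacle is the second step: verifying that the derived products $\prod_n \Fcal_n$, computed in the $\infty$-categorical $\Comod_{k[t]}$-setting rather than on underlying $k$-modules, genuinely acquire the claimed low-degree homotopy. This requires careful bookkeeping with the divided-power distribution algebra of $\GG_{a,k}$, whose non-Noetherianity is the ultimate source of the failure---the very same positive-characteristic phenomenon that prevents $\GG_{a,k}$ from being well stabilized in the sense of the previous subsection.
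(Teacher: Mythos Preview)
Your first step has a genuine gap. You claim that dualizability of $\Ccal$ implies that products in $\Ccal$ have uniformly bounded cohomological amplitude, and you justify this by saying that a dualizable presentable stable category is a retract of a compactly generated one. But being a retract in $\Pr^L_{\st}$ carries no information about the t-structure: the retraction need not be t-exact, and there is no reason the ambient compactly generated category should have bounded products in any t-structure compatible with yours. In fact the only route the paper offers from dualizability back to bounded products runs through corollary~\ref{corollary non dualizability poorly stabilized}, which already uses theorem~\ref{theorem non dualizability bga}; so invoking that direction here would be circular. A posteriori the implication you want is true for these stacks, but you would need an independent argument, and none is given.

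Your second step also has issues. The map $\B\alpha_{p^n} \to \B\GG_{a,k}$ induced by the inclusion $\alpha_{p^n} \hookrightarrow \GG_{a,k}$ is not a closed immersion: its fiber is $\GG_{a,k}/\alpha_{p^n} \cong \GG_{a,k}$, so it is a smooth affine morphism (a $\GG_a$-torsor). One can still try to push forward sheaves along it, but the picture of ``sheaves supported on $\B\alpha_{p^n}$'' is misleading, and the construction of the $\Fcal_n$ remains a sketch.

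The paper's argument is entirely different and avoids products altogether. It identifies $\QCoh(\B\GG_{a,k})^\heartsuit$ with locally finite modules over $R = k[x_1,x_2,\ldots]/(x_1^p,x_2^p,\ldots)$ and uses the shift endomorphism $x_i \mapsto x_{i+1}$ to build an endofunctor $F$ of $\QCoh(\B\GG_{a,k})$ with colimit-preserving right adjoint. Lemma~\ref{lemma exists endofunctor} shows that the sequential colimit of $\QCoh(\B\GG_{a,k})$ along $F$ is $\Mod_k$. If $\QCoh(\B\GG_{a,k})$ were dualizable, Efimov's results would give $\Mod_k^\omega$ as the colimit of the categories $\QCoh(\B\GG_{a,k})^\omega$; since $\Mod_k^\omega \neq 0$, this forces $\QCoh(\B\GG_{a,k})$ to have a nonzero compact object, contradicting \cite{HallNeemanRydh}. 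The positive-characteristic input is precisely the infinite polynomial presentation of the distribution algebra, which makes the shift available.
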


As a consequence of theorem \ref{theorem non dualizability bga} we obtain converses to corollary \ref{corollary consequences controlled}.

\begin{corollary}\label{corollary non dualizability poorly stabilized}
Let $X$ be a quasi-separated algebraic stack with affine stabilizers. If $X$ is poorly stabilized then $\QCoh(X)$ is not dualizable.
\end{corollary}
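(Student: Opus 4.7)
The plan is to argue by the contrapositive, aiming to produce a map $f:\B\GG_{a,k}\to X$ (with $k$ of positive characteristic) that propagates dualizability of $\QCoh(X)$ backwards to $\QCoh(\B\GG_{a,k})$ and thus contradicts Theorem~\ref{theorem non dualizability bga}. By the definition of poorly stabilized (as set up in the preceding section of the paper), the hypothesis provides a geometric point $x:\Spec(k)\to X$ with $k$ of positive characteristic at which the stabilizer group $G_x$ admits a closed subgroup isomorphic to $\GG_{a,k}$. Composing the morphism $\B\GG_{a,k}\to\B G_x$ induced by this subgroup with the residual gerbe inclusion $\B G_x\hookrightarrow X$ produces the desired $f$.

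As a preliminary step I would reduce to the case when $X$ is Noetherian by applying Rydh's approximation theorem (\cite{RydhApproximation}) to write $X$ as a cofiltered limit of Noetherian quasi-separated stacks $X_\lambda$ with affine stabilizers along affine transition morphisms. A poorly stabilized point of $X$ descends to a poorly stabilized point of some $X_\lambda$, and dualizability of $\QCoh(X)=\colim_\lambda\QCoh(X_\lambda)$ descends to dualizability of some $\QCoh(X_\lambda)$ because the transition functors admit colimit-preserving right adjoints. In the Noetherian case, the residual gerbe embedding $\B G_x\hookrightarrow X$ is a locally closed immersion, and the symmetric monoidal cocontinuous pullback $f^*:\QCoh(X)\to\QCoh(\B\GG_{a,k})$ is well defined.

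The main obstacle is showing that dualizability of $\QCoh(X)$ forces dualizability of $\QCoh(\B\GG_{a,k})$ through $f^*$. The cleanest mechanism is to realize $\QCoh(\B\GG_{a,k})$ as a dualizable object of the symmetric monoidal $\infty$-category of $\QCoh(X)$-module categories, so that dualizability of the underlying stable presentable category follows by extending scalars. To accomplish this, I would trivialize the gerbe at $x$ on a suitable affine cover, reducing the dualizability question to an interplay of finite flat descent (along covers of the type used in lemma~\ref{lemma locally on BG}) with base change along the affine morphism underlying the residual gerbe. Once dualizability over $\QCoh(X)$ is in hand, Theorem~\ref{theorem non dualizability bga} furnishes the contradiction.
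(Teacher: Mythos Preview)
Your proposal has a genuine gap, and it also misses the simple mechanism the paper uses.

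The paper's proof is two lines: by \cite{HallNeemanRydh} lemma~4.2 there is a \emph{quasi-affine} morphism $\B\GG_{a,k}\to X_{\cl}$, and composing with the affine $X_{\cl}\to X$ keeps it quasi-affine. Quasi-affineness is the whole point: it guarantees that $f^*:\QCoh(X)\to\QCoh(\B\GG_{a,k})$ has a \emph{conservative, colimit-preserving} right adjoint $f_*$. Then \cite{Efimov} proposition~1.57 says that any presentable stable category receiving such a functor from a dualizable one is itself dualizable, contradicting theorem~\ref{theorem non dualizability bga}. No Noetherian reduction, no relative dualizability, no descent.

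Your argument, by contrast, does not secure quasi-affineness of $f$. The map $\B\GG_{a,k}\to\B G_x$ is quasi-affine (unipotent subgroups are observable), but the residual gerbe inclusion $\B G_x\hookrightarrow X$ is only a monomorphism and need not be quasi-affine or even quasi-compact for a general quasi-separated $X$; this is exactly what \cite{HallNeemanRydh} lemma~4.2 works to arrange. Without quasi-affineness you lose the conservative colimit-preserving right adjoint, and your substitute mechanism --- showing $\QCoh(\B\GG_{a,k})$ is dualizable in $\Mod_{\QCoh(X)}(\Pr^L_{\st})$ --- is both unproven and implausible: dualizability in $\Mod_{\QCoh(X)}$ can be checked after base change along $\QCoh(X)\to\Mod_k$, where it would force $\QCoh(\B\GG_{a,k})$ to be dualizable over $\Mod_k$, i.e.\ over $\Sp$, which is precisely what theorem~\ref{theorem non dualizability bga} denies. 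So the relative dualizability you are hoping to establish is in fact false.

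Finally, your Noetherian reduction is incorrect as stated: writing $\QCoh(X)=\colim_\lambda\QCoh(X_\lambda)$ along left adjoints does not allow you to conclude that some $\QCoh(X_\lambda)$ is dualizable from dualizability of the colimit. Dualizability passes \emph{forward} along such colimits, not backward to the terms.
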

\begin{proof}
By \cite{HallNeemanRydh} lemma 4.2 there exists a field $k$ of positive characteristic and a quasi-affine morphism $\B\GG_{a,k} \rightarrow X_{\cl}$. Composing with the (affine) morphism $X_\cl \rightarrow X$ we obtain a quasi-affine morphism $\B \GG_{a, k} \rightarrow X$. In this situation the corresponding pullback functor $\QCoh(X) \rightarrow \QCoh(\B \GG_{a,k})$ admits a right adjoint which is conservative and colimit preserving. The fact that $\QCoh(X)$ is not dualizable now follows from a combination of theorem \ref{theorem non dualizability bga} and \cite{Efimov} proposition 1.57.
\end{proof}

\begin{corollary}\label{corollary not 1 affine}
Let $X$ be a quasi-separated algebraic stack with affine stabilizers. If $X$ is poorly stabilized then $X$ is not $1$-affine.
\end{corollary}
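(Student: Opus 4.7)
The natural approach is to combine corollary \ref{corollary non dualizability poorly stabilized}, which is the preceding result and already rules out dualizability of $\QCoh(X)$ under the current hypotheses, with the general principle that $1$-affineness of any algebraic stack forces $\QCoh(X)$ to be dualizable. Once the latter implication is in hand, the corollary is formally the contrapositive of this combination. So my plan is to reduce the statement to showing: \emph{if $X$ is $1$-affine, then $\QCoh(X)$ is dualizable.}

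For this implication, I would reuse the mechanism that established item (1) of corollary \ref{corollary consequences controlled}. The unit object of the symmetric monoidal $\infty$-category $\twoQCoh(X)$ is tautologically dualizable (its dual is itself, via the unit constraint), and its global sections are canonically identified with $\QCoh(X)$. Under the hypothesis that $X$ is $1$-affine, corollary \ref{corollary twisted 1} then applies to say that $\Gamma(X, -)$ carries dualizable objects of $\twoQCoh(X)$ to dualizable presentable categories; specializing to the unit yields dualizability of $\QCoh(X)$. This is exactly the path already taken in the proof of corollary \ref{corollary consequences controlled}, only stripped of its reliance on controlledness: the present usage needs only the $1$-affine hypothesis, not the Noetherian or well stabilized conditions that were used to deduce $1$-affineness there.

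Putting the pieces together: assume toward contradiction that $X$ is both poorly stabilized and $1$-affine. The previous paragraph then gives that $\QCoh(X)$ is dualizable, while corollary \ref{corollary non dualizability poorly stabilized} gives that it is not, completing the argument. The main thing to verify is really just that corollary \ref{corollary twisted 1} is a genuine theorem about arbitrary $1$-affine stacks rather than one that was tacitly assuming the Noetherian / well stabilized framework of the surrounding section; assuming that statement is as robust as its usage in corollary \ref{corollary consequences controlled} suggests, no additional work is required beyond stringing together the two existing results.
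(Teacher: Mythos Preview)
Your proposal is correct and essentially matches the paper's proof. The paper combines corollary \ref{corollary non dualizability poorly stabilized} with a standalone proposition \ref{proposition 1affine implies dualizable} (stated in an external document) asserting that $1$-affineness forces $\QCoh(X)$ to be dualizable; you instead derive that implication on the spot from corollary \ref{corollary twisted 1} applied to the unit of $\twoQCoh(X)$, which is exactly the mechanism the paper used in proving item (3) of corollary \ref{corollary consequences controlled} and is presumably how proposition \ref{proposition 1affine implies dualizable} itself is established.
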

\begin{proof}
Combine corollary \ref{corollary non dualizability poorly stabilized} with proposition \ref{proposition 1affine implies dualizable}.
\end{proof}

\begin{corollary}\label{coro products unbounded}
Let $X$ be a qcqs algebraic stack with affine stabilizers. If products in $\QCoh(X)$ have bounded cohomological amplitude then $X$ is well stabilized.
\end{corollary}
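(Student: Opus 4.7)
The plan is to argue by contrapositive: starting from $X$ poorly stabilized, we aim to show that products in $\QCoh(X)$ have unbounded cohomological amplitude. As in the proof of corollary \ref{corollary non dualizability poorly stabilized}, \cite{HallNeemanRydh} lemma 4.2 together with the affine closed immersion $X_\cl \to X$ yields a field $k$ of positive characteristic and a quasi-affine morphism $f: \B\GG_{a,k} \to X$. The strategy is to transfer an unboundedness phenomenon for products from $\QCoh(\B\GG_{a,k})$ to $\QCoh(X)$ across $f$.

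The pushforward $f_*$ preserves limits (being a right adjoint, hence in particular preserves products), preserves colimits (since $f$ is quasi-affine), is conservative, and has cohomological amplitude bounded by some $d$. Writing $f = \pi \circ j$ with $j: \B\GG_{a,k} \to Y$ an open immersion and $\pi: Y \to X$ affine, one sees moreover that $f_*$ detects cohomological boundedness: $\pi_*$ is t-exact and conservative, while $j^*$ is t-exact and $j^* j_*$ is the identity, so $f_*\Gcal \in \QCoh(X)^{\geq n}$ forces $\Gcal \in \QCoh(\B\GG_{a,k})^{\geq n}$.

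Suppose for contradiction that products in $\QCoh(X)$ have cohomological amplitude bounded by some $N$. Given a family $\{\Gcal_i\}$ of connective objects in $\QCoh(\B\GG_{a,k})$, the images $f_*\Gcal_i$ lie uniformly in $\QCoh(X)^{\geq -d}$; up to a shift, the hypothesis then bounds the cohomological amplitude of $\prod f_*\Gcal_i$ by a constant depending only on $N$ and $d$. Since $f_*$ preserves products, $f_* \prod \Gcal_i = \prod f_*\Gcal_i$, and the detection property of $f_*$ propagates the bound to $\prod \Gcal_i$. Consequently, products in $\QCoh(\B\GG_{a,k})$ would themselves have uniformly bounded cohomological amplitude.

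The remaining and main step is to rule this out: products in $\QCoh(\B\GG_{a,k})$ must in fact be unbounded in positive characteristic. This should be produced by the proof of theorem \ref{theorem non dualizability bga}, which presumably exhibits an explicit family of connective sheaves on $\B\GG_{a,k}$ whose product has nonzero cohomology in arbitrarily negative degrees, such unboundedness being the standard obstruction to dualizability of a presentable stable category. The principal obstacle is thus confirming that the construction behind theorem \ref{theorem non dualizability bga} yields unbounded products as an intermediate fact, not merely non-dualizability as its final conclusion; if instead one only extracts the non-dualizability statement, a supplementary general principle of the shape ``bounded products in $\QCoh$ of a reasonable stack imply dualizability'' would be needed to close the argument.
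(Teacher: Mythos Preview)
Your transfer argument along the quasi-affine map $f:\B\GG_{a,k}\to X$ is correct: $f_*$ is colimit preserving, product preserving, of bounded amplitude, and reflects coconnectivity via the factorization $f=\pi\circ j$ you describe. So bounded products in $\QCoh(X)$ would indeed force bounded products in $\QCoh(\B\GG_{a,k})$.

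The gap is exactly where you suspected. The proof of theorem \ref{theorem non dualizability bga} does \emph{not} proceed by exhibiting a family with unbounded product. It instead constructs an endofunctor $F$ of $\QCoh(\B\GG_{a,k})$ (via restriction along a shift map on an infinite polynomial ring modulo $p$-th powers) such that the filtered colimit of $\QCoh(\B\GG_{a,k})$ along iterates of $F$ is $\Mod_k$; dualizability would then force this colimit to be computed on compact objects, producing a nonzero compact object in $\QCoh(\B\GG_{a,k})$, which is known not to exist. No intermediate statement about products falls out.

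The paper therefore takes precisely the alternative you name at the end: it proves directly that bounded products in $\QCoh(X)$ imply dualizability of $\QCoh(X)$, and then invokes corollary \ref{corollary non dualizability poorly stabilized}. The mechanism is Efimov's (AB6) criterion: one shows that the projection $\Ind(\QCoh(X))\to\QCoh(X)$ preserves products, by first using the amplitude bound to reduce to products of uniformly truncated objects, and then using compact generation of the bounded pieces $\QCoh(X)^\cn_{\leq N}$ (obtained via Rydh's approximation by a Noetherian stack and weak coherence). Your route would still need this ``bounded products $\Rightarrow$ dualizable'' implication, just specialized to $\B\GG_{a,k}$; so it does not ultimately avoid the work, though the specialization is slightly simpler since $\B\GG_{a,k}$ is already classical and Noetherian.
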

\begin{proof}
Using the fact that the pushforward $\QCoh(X_\cl) \rightarrow \QCoh(X)$ is t-exact, conservative and preserves products, we deduce that products in $\QCoh(X_\cl)$ have bounded cohomological amplitude. Replacing $X$ by $X_\cl$ we may now reduce to the case when $X$ is classical. By corollary \ref{corollary non dualizability poorly stabilized}, it will suffice to show that $\QCoh(X)$ is dualizable. We will do so by showing that $\QCoh(X)$ satisfies the axiom (AB6), see \cite{Efimov} section 1.9. In other words, we will show that the projection $p: \Ind(\QCoh(X)) \rightarrow \QCoh(X)$ preserves small products.

Let $\Fcal_\alpha$ be a small family of objects of $\Ind(\QCoh(X))$, and suppose that products in $\QCoh(X)$ have cohomological amplitude bounded by $d$. Our goal is to show that the comparison map $p(\textstyle{\prod}_\alpha \Fcal_\alpha) \rightarrow \textstyle{\prod}_\alpha p(\Fcal_\alpha)$ is an isomorphism. We will do so by showing that for every integer $m$ the map
\[
\tau_{\leq m}p(\textstyle{\prod}_\alpha \Fcal_\alpha) \rightarrow \tau_{\leq m}\textstyle{\prod}_\alpha p(\Fcal_\alpha)
\]
is an isomorphism. Using our bound on products, together with the t-exactness of $p$, we see that the right hand side equals $\tau_{\leq m} \prod_\alpha p(\tau_{\leq m+d}\Fcal_\alpha)$. We claim that, similarly, the left hand side equals $\tau_{\leq m}p(\textstyle{\prod}_\alpha \tau_{\leq m + d}\Fcal_\alpha) $. To see this, it is enough to show that products in $\Ind(\QCoh(X))$ have cohomological amplitude bounded by $d$. Suppose given a small family $\Gcal_\alpha$ of connective objects of $\Ind(\QCoh(X))$, and write each one as a filtered colimit of a diagram of connective objects $\Gcal_{\alpha, \beta}$ in $\QCoh(X)$. Then since (AB6) holds in $\Ind(\QCoh(X))$, we have
\[
\textstyle{\prod} \Gcal_{\alpha} = \colim \textstyle{\prod}\Gcal_{\alpha, \beta_\alpha}
\]
and the claim then follows from the fact that the objects $ \textstyle{\prod}\Gcal_{\alpha, \beta_\alpha}$ are $(-d)$-connective.

We have now reduced to showing that for every integer $m$ the map 
\[
\tau_{\leq m}p(\textstyle{\prod}_\alpha \tau_{\leq m+d}\Fcal_\alpha) \rightarrow \tau_{\leq m}\textstyle{\prod}_\alpha p(\tau_{\leq m+d}\Fcal_\alpha)
\]
is an isomorphism. We claim that, in fact, the map
\[
p(\textstyle{\prod}_\alpha \tau_{\leq m+d}\Fcal_\alpha) \rightarrow \textstyle{\prod}_\alpha p(\tau_{\leq m+d}\Fcal_\alpha)
\]
is an isomorphism. This amounts to the assertion that (AB6) is satisfied in $\QCoh(X)_{\leq m+d}$. To see this it is enough to show that $\QCoh(X)_{\leq m+d}$ is compactly generated. This is a directed colimit of categories of the form $\QCoh(X)^\cn_{\leq N}$ along compact functors, so it is enough to show that $\QCoh(X)^\cn_{\leq N}$ is compactly generated for all $N$. 

By \cite{RydhApproximation} there exists an affine morphism $X \rightarrow X'$ where $X'$ is a classical Noetherian algebraic stack. The functor of pushforward $\QCoh(X)^\cn_{\leq N} \rightarrow \QCoh(X')^\cn_{\leq N}$ is colimit preserving, conservative, and has a left adjoint, so it suffices to show that $\QCoh(X')^\cn_{\leq N}$ is compactly generated. We claim that in fact $\QCoh(X')^\cn$ is weakly coherent, in the sense of \cite{SAG} section C.6.5; this will suffice by \cite{SAG} proposition C.6.5.4. 

We apply \cite{SAG} proposition 6.5.6. Condition (i) follows from the fact that the almost compact objects of $\QCoh(X')^\cn$ are the objects which have coherent homology, and these are  closed under finite limits. Condition (ii) is a consequence of the fact that every object in $\QCoh(X')^\heartsuit$ is a filtered colimit of its coherent subobjects, see \cite{LaumonMoretBailly} proposition 15.4.
\end{proof}

The rest of this section is devoted to the proof of theorem \ref{theorem non dualizability bga}. 

\begin{notation}
For each Grothendieck abelian category $\Ccal$ we denote by $\widehat{\D}(\Ccal)$ the completed derived category of $\Ccal$. In other words, $\widehat{\D}(\Ccal)$ is obtained by left completing the canonical t-structure on the derived category $\D(\Ccal)$.
\end{notation}

\begin{lemma}\label{lemma t structure colimit}
Let $\Ccal_i$ be a filtered system of Grothendieck abelian categories, and assume that the transition functors are left exact and admit exact right adjoints. Then we have equivalences
\[
\colim \widehat{\normalfont \text{D}}(\ccal_i) = \widehat{\normalfont \text{D}}(\colim \ccal_i)
\]
and
\[
\colim \D(\ccal_i) = \D(\colim \ccal_i).
\]
\end{lemma}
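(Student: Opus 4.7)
The plan is to construct, for each of the two equalities, a t-exact colimit-preserving comparison functor from the left-hand side to the right, and to verify it is an equivalence via a t-structure analysis and an Ext comparison. Set $\ccal := \colim_i \ccal_i$. Since each transition $F_{ij} \colon \ccal_i \to \ccal_j$ is a left adjoint that is also left exact, it is automatically exact; the structure maps $F_i \colon \ccal_i \to \ccal$ inherit exactness together with exact right adjoints $G_i \colon \ccal \to \ccal_i$. Consequently each $F_{ij}$ (and each $F_i$) induces a t-exact colimit-preserving functor on both $\D$ and $\widehat{\D}$ whose right adjoint is also t-exact, and these assemble into the desired comparison functors $\Phi \colon \colim_i \D(\ccal_i) \to \D(\ccal)$ and $\widehat{\Phi} \colon \colim_i \widehat{\D}(\ccal_i) \to \widehat{\D}(\ccal)$ in $\mathrm{Pr}^L_{\mathrm{st}}$.

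Next, I would compute the sources by viewing the filtered colimits in $\mathrm{Pr}^L_{\mathrm{st}}$ as limits in $\mathrm{Pr}^R_{\mathrm{st}}$ along the t-exact right adjoints $\D(G_{ji})$ and $\widehat{\D}(G_{ji})$. This equips each source with an accessible t-structure in which (co)connectivity is detected fiberwise, and whose heart is identified with $\lim_j \ccal_j = \colim_j \ccal_j = \ccal$ by the same limit/colimit interchange applied to the hearts. Hence $\Phi$ and $\widehat{\Phi}$ are t-exact and restrict to the identity on hearts. Moreover $\colim_i \widehat{\D}(\ccal_i)$ is left complete, since each $\widehat{\D}(\ccal_i)$ is and left completeness is preserved under limits of t-exact functors.

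The crux is to establish fully faithfulness of the comparison functors. Since mapping spaces in a filtered colimit in $\mathrm{Pr}^L_{\mathrm{st}}$ are themselves filtered colimits of mapping spaces, this reduces to the Ext comparison
\[
\colim_{j \geq i} \operatorname{Ext}^n_{\ccal_j}(F_{ij} A, F_{ij} B) \xrightarrow{\sim} \operatorname{Ext}^n_{\ccal}(F_i A, F_i B)
\]
for $A, B \in \ccal_i$ and $n \geq 0$. I would prove this through the adjunction identifications $\operatorname{Ext}^n_{\ccal_j}(F_{ij}A, F_{ij}B) \simeq \operatorname{Ext}^n_{\ccal_i}(A, G_{ji} F_{ij} B)$ and $\operatorname{Ext}^n_{\ccal}(F_i A, F_i B) \simeq \operatorname{Ext}^n_{\ccal_i}(A, G_i F_i B)$, both valid because the $G$'s are exact and hence coincide with their derived functors, combined with the colimit description $G_i F_i B = \colim_j G_{ji} F_{ij} B$ in $\ccal_i$ and a standard injective-resolution argument using that $G_{ji}$ preserves injectives as the right adjoint of the exact functor $F_{ij}$. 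Fully faithfulness of $\widehat{\Phi}$ then propagates from the heart to all of $\widehat{\D}(\ccal)$ by left completeness of both sides, while fully faithfulness of $\Phi$ is extended from the heart using that $\ccal$ generates $\D(\ccal)$ and the colimit under shifts and colimits. Essential surjectivity follows in both cases from this generation. The principal technical obstacle is the Ext comparison itself, especially in the unbounded case of $\Phi$ where left completeness is unavailable; the exact-right-adjoint hypothesis is precisely what enables compatible injective resolutions and makes the colimit computation tractable.
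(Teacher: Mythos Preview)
Your overall strategy is reasonable and shares important ingredients with the paper's argument (the limit/colimit interchange along the exact right adjoints, the identification of hearts, and left completeness of the source). However, there is a genuine error at the fully faithfulness step.

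The assertion ``mapping spaces in a filtered colimit in $\mathrm{Pr}^L_{\mathrm{st}}$ are themselves filtered colimits of mapping spaces'' is false. That formula describes filtered colimits in $\mathrm{Cat}_\infty$; colimits in $\mathrm{Pr}^L_{\mathrm{st}}$ are computed as limits along right adjoints, and for non-compact objects the mapping spaces are not given by such a colimit. Concretely, if $p_i$ denotes the structure functor with right adjoint $q_i$, then
\[
\operatorname{Map}_{\colim \D(\ccal_j)}(p_i A, p_i B) \;=\; \operatorname{Map}_{\D(\ccal_i)}(A,\, q_i p_i B) \;=\; \operatorname{Map}_{\D(\ccal_i)}\bigl(A,\, \colim_{j} G_{ji} F_{ij} B\bigr),
\]
and there is no reason in a general Grothendieck abelian category for $\operatorname{Ext}^n_{\ccal_i}(A,-)$ to commute with this filtered colimit. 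So the displayed Ext comparison you reduce to is not what you need, and the ``injective-resolution argument'' you invoke to pass the colimit inside does not go through for arbitrary $A$.

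The good news is that once you write down the correct formula above, fully faithfulness becomes \emph{easier}, not harder: the same adjunction on the target gives $\operatorname{Map}_{\D(\ccal)}(F_i A, F_i B) = \operatorname{Map}_{\D(\ccal_i)}(A, G_i F_i B)$, and the identity $G_i F_i \simeq \colim_j G_{ji} F_{ij}$ (now interpreted in $\D(\ccal_i)$, using t-exactness of all functors involved) matches the two sides directly, with no need to commute Ext past a colimit. You should also be careful with essential surjectivity in the completed case: the heart does not obviously generate $\widehat{\D}(\ccal)$ under colimits, so you must instead use left completeness on both sides and reduce to the bounded range.

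By contrast, the paper avoids constructing a comparison functor altogether: it works on connective parts, shows that $\colim \widehat{\D}(\ccal_i)_{\geq 0}$ is a Grothendieck prestable category with heart $\colim \ccal_i$, and then verifies the intrinsic properties (complete and weakly $0$-complicial, resp.\ separated and $0$-complicial) that force it to coincide with $\widehat{\D}(\colim \ccal_i)_{\geq 0}$ (resp.\ $\D(\colim \ccal_i)_{\geq 0}$). This sidesteps any explicit Ext computation.
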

\begin{proof}
We first address the assertion for the completed derived categories. It suffices to show that we have an equivalence
\[
\colim \widehat{\text{D}}(\ccal_i)_{\geq 0} = \widehat{\text{D}}(\colim \ccal_i)_{\geq 0}.
\]
Applying \cite{SAG} theorem 3.3.1 we deduce that the left hand side is a Grothendieck prestable category. Its heart is given by
\[
\Set \otimes \colim \widehat{\text{D}}(\ccal_i)_{\geq 0} = \colim( \Set \otimes \widehat{\text{D}}(\ccal_i)_{\geq 0}  ) = \colim \ccal_i.
\]
The lemma will thus follow if we are able to show that $\colim \widehat{\text{D}}(\ccal_i)_{\geq 0}$ is complete and weakly $0$-complicial (see \cite{SAG} C.5.9). 

We first show that $\colim \widehat{\text{D}}(\ccal_i)_{\geq 0}$ is complete. Since the transition functors in the diagram $\ccal_i$ admit exact right adjoints, we have that the transition functors in the diagram $\widehat{\text{D}}(\ccal_i)_{\geq 0}$ admit exact right adjoints. Passing to right adjoints we deduce that $\colim \widehat{\text{D}}(\ccal_i)_{\geq 0}$ can be written as the limit of the categories $\widehat{\text{D}}(\ccal_i)_{\geq 0}$ along exact functors. We now have
\begin{align*}
\colim \widehat{\text{D}}(\ccal_i)_{\geq 0} &= \lim \widehat{\text{D}}(\ccal_i)_{\geq 0} \\ &= \lim_i \lim_n (\widehat{\text{D}}(\ccal_i)_{\geq 0})_{\leq n} \\ &= \lim_n \lim_i  (\widehat{\text{D}}(\ccal_i)_{\geq 0})_{\leq n} \\ &= \lim_n \underset{i}{\colim} (\widehat{\text{D}}(\ccal_i)_{\geq 0})_{\leq n} 
\end{align*}
and hence $\colim (\widehat{\text{D}}(\ccal_i)_{\geq 0})$ is complete, as desired.

It remains to show that $\colim \widehat{\text{D}}(\ccal_i)_{\geq 0}$ is weakly $0$-complicial. Let $X$ be a truncated object. We have to show that $X$ receives an effective epimorphism\footnote{In other words, a morphism which induces a surjection on $H_0$.} from a $0$-truncated object. For each $i$ let $p_i:  \widehat{\text{D}}(\ccal_i)_{\geq 0} \rightarrow  \colim \widehat{\text{D}}(\ccal_i)_{\geq 0}$ be the canonical functor, and let $q_i$ be its right adjoint. Then $X = \colim p_i q_i (X)$, and in particular $X$ receives an effective epimorphism from $\bigoplus p_i q_i(X)$. We may thus reduce to showing that for every $i$, the object $p_i q_i (X)$ receives an effective epimorphism from a $0$-truncated object. Since $p_i$ is exact, we may further reduce to showing that $q_i(X)$ receives an effective epimorphism from a $0$-truncated object. Since $q_i$ is exact we have that $q_i(X)$ is truncated. The desired claim now follows from the fact that $\widehat{\text{D}}(\ccal_i)_{\geq 0}$ is weakly $0$-complicial.

The proof of the assertion at the level of ordinary (not necessarily complete) derived categories follows along the same lines as the argument above. In this case, one shows that $\colim \D(\Ccal_i)_{\geq 0}$ is a  separated and $0$-complicial Grothendieck prestable category, which implies that it is the connective derived category of its heart.
\end{proof}

\begin{lemma}\label{lemma exists endofunctor}
Let $k$ be a field of positive characteristic. Then there exists an endofunctor $F: \QCoh(\B \GG_{a,k}) \rightarrow \QCoh(\B\GG_{a,k})$, with a colimit preserving right adjoint, and with the property that the colimit  of the directed system
\[
\QCoh(\B\GG_{a,k}) \xrightarrow{F} \QCoh(\B\GG_{a,k}) \xrightarrow{F} \QCoh(\B\GG_{a,k}) \xrightarrow{F} \ldots 
\]
is equivalent to $\Mod_k$.
\end{lemma}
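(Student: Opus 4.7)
The plan is to construct $F$ using the Frobenius endomorphism of $\GG_{a,k}$, which is a faithfully flat group homomorphism with kernel $\alpha_{p,k}$. Denote the induced morphism of classifying stacks by $\phi: \B\GG_{a,k} \to \B\GG_{a,k}$. A natural candidate is $F = \phi^*$, whose right adjoint $G = \phi_*$ is, concretely, the derived $\alpha_{p,k}$-invariants functor.

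I first verify that $G$ preserves all small colimits. As a right adjoint, it preserves limits, and being an exact functor between stable $\infty$-categories it preserves finite colimits; thus it suffices to check preservation of filtered colimits, equivalently that the structure sheaf is compact in $\QCoh(\B\alpha_{p,k})$. This can be extracted from the explicit Tate-type $2$-periodic projective resolution of $k$ over the distribution algebra $A := k[D]/D^p$, namely $\cdots \to A \xrightarrow{D^{p-1}} A \xrightarrow{D} A \to k$: each $\mathrm{Ext}^i_A(k, -)$ is computed as a kernel--cokernel at a single position of the resulting Tate cochain complex, and such constructions commute with filtered colimits.

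To compute the colimit, I use the standard equivalence
\[
\colim_{n}\bigl(\QCoh(\B\GG_{a,k}), F\bigr) \;\simeq\; \lim_{n}\bigl(\QCoh(\B\GG_{a,k}), G\bigr)
\]
between colimits in presentable stable categories and limits along their right adjoints. I then appeal to $\QCoh$-descent along the tower of fpqc morphisms $\phi$ to identify the right-hand side with $\QCoh$ of the colimit stack $X = \colim(\B\GG_{a,k} \xrightarrow{\phi} \B\GG_{a,k} \to \cdots)$. Computing $X$ via coordinate rings: the inverse limit $\varprojlim(k[x] \xleftarrow{x \mapsto x^p} k[x] \leftarrow \cdots)$ collapses to $k$ when $k$ is perfect (only constant polynomials survive being taken to iterated $p$-th powers). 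Hence $X = \Spec k$, so $\QCoh(X) = \Mod_k$.

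The main obstacle I anticipate is the descent step: one must verify that the limit of categories along $\phi_*$ genuinely coincides with $\QCoh$ of the geometric colimit of stacks, rather than with $\QCoh$ of some larger perfection-style (non-representable) sheaf colimit. Depending on how this comparison behaves, it may be necessary to refine $F$ (for example, composing $\phi^*$ with an auxiliary truncation or geometric projection to enforce the collapse to $\Mod_k$), or to invoke finer descent results for the infinite Frobenius tower. A secondary issue is handling non-perfect $k$, which I would address by faithfully flat base change to its perfect closure.
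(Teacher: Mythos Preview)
Your proposed right adjoint $G=\phi_*$ does \emph{not} preserve colimits, and the argument you give for it is where the approach breaks down. You argue that $\Gamma(\B\alpha_p,-)$ commutes with filtered colimits because each $\Ext^i_A(k,-)$ (with $A=k[D]/D^p$) is a finite kernel/cokernel in the Tate complex. This only shows that $\Ext^i_A(k,-)$ commutes with filtered colimits taken in the abelian category $\Mod_A^\heartsuit$; it does \emph{not} show that $\RHom_A(k,-)$ commutes with filtered colimits in the stable category $\Mod_A$. In fact it cannot: the compact objects of $\Mod_A$ are exactly the perfect complexes, and $k$ has infinite projective dimension over $A$, so $k$ is not compact. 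Since $\B\alpha_p\to\B\GG_{a,k}$ is affine with colimit-preserving fully faithful pushforward, the failure of $\Gamma(\B\alpha_p,-)$ to preserve colimits propagates to $\phi_*$. So with $F=\phi^*$ the hypothesis ``colimit preserving right adjoint'' is simply not met, and this is exactly the hypothesis one needs later (e.g.\ to pass to compact objects, or to invoke Efimov-type dualizability arguments).

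There is a second, independent problem in your colimit computation. You write $\colim_n(\QCoh(\B\GG_{a,k}),\phi^*)\simeq\lim_n(\QCoh(\B\GG_{a,k}),\phi_*)$ and then identify the right-hand side with $\QCoh$ of the stack colimit $\colim(\B\GG_{a,k}\xrightarrow{\phi}\B\GG_{a,k}\to\cdots)$. But $\QCoh$ sends colimits of stacks to limits along \emph{pullback}, so $\QCoh(\colim\B\GG_{a,k})$ is $\lim(\QCoh(\B\GG_{a,k}),\phi^*)$, which is the wrong diagram. Your computation of $\varprojlim(k[x]\xleftarrow{x\mapsto x^p}k[x])=k$ is a computation about $\colim\GG_{a,k}$, not about $\colim\B\GG_{a,k}$, and there is no direct reason these should agree.

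For comparison, the paper avoids both issues by not using Frobenius at all. It identifies $\QCoh(\B\GG_{a,k})^\heartsuit$ with locally finite modules over the hyperalgebra $R=k[x_1,x_2,\ldots]/(x_i^p)$ and takes $F$ to be restriction along the shift $\alpha:x_i\mapsto x_{i+1}$. Because $R$ is free of rank $p$ over itself via $\alpha$ (and $R$ is Frobenius), extension of scalars along $\alpha$ is \emph{both} left and right adjoint to restriction; in particular the right adjoint is colimit preserving for structural reasons. The colimit is then computed by reducing first to hearts and then to compact objects, where one sees directly that iterated shifts eventually kill all the $x_i$.
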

\begin{proof}
Consider the commutative $k$-algebra
\[
R = \frac{k[x_1, x_2, x_3, \ldots]}{(x_1^p, x_2^p, x_3^p, \ldots)}.
\]
Say that a discrete $R$-module $M$ is locally finite if every element of $M$ is annihilated by all but finitely many of the generators $x_i$, and denote by $\Mod_R^{\heartsuit, \text{lf}}$ the full subcategory of $\Mod_R^\heartsuit$ on the locally finite modules. Then $\QCoh(\B\GG_{a,k})^\heartsuit$ is equivalent to $\Mod_R^{\heartsuit, \text{lf}}$, and consequently we have that $\QCoh(\B\GG_{a,k})$ is equivalent to the completed derived category $\widehat{\text{D}}(\Mod_R^{\heartsuit, \text{lf}})$.

Let $\alpha: R \rightarrow R$ be the algebra map which sends $x_i$ to $x_{i+1}$ for all $i$. Then restriction of scalars along $\alpha$ provides a colimit preserving left exact functor 
\[
\alpha_*: \Mod_R^{\heartsuit, \text{lf}} \rightarrow \Mod_R^{\heartsuit, \text{lf}}.
\]
The above admits a unique extension to a colimit preserving t-exact functor
\[
\widehat{\text{D}}(\alpha_*): \widehat{\text{D}}(\Mod_R^{\heartsuit, \text{lf}}) \rightarrow \widehat{\text{D}}(\Mod_R^{\heartsuit, \text{lf}}).
\]
Let $F: \QCoh(\B\GG_{a,k}) \rightarrow \QCoh(\B\GG_{a,k})$ be the induced functor. We first show that $F$ admits a colimit preserving right adjoint. To see this, observe that extension of scalars along $\alpha$ provides a functor
\[
\alpha^*: \Mod_R^{\heartsuit, \text{lf}} \rightarrow \Mod_R^{\heartsuit, \text{lf}}
\]
which is adjoint to $\alpha_*$ on both sides. It follows that the corresponding derived functor
\[
\widehat{\text{D}}(\alpha^*): \widehat{\text{D}}(\Mod_R^{\heartsuit, \text{lf}}) \rightarrow \widehat{\text{D}}(\Mod_R^{\heartsuit, \text{lf}}).
\]
is adjoint to $\widehat{\text{D}}(\alpha_*)$ on both sides. The corresponding endofunctor of $\QCoh(\B\GG_{a,k})$ is adjoint to $F$ on both sides, and the claim follows.

It remains to show that the colimit of the diagram from the lemma is equivalent to $\Mod_k$. This is the same as the colimit of the directed system
\[
\widehat{\text{D}}(\Mod_R^{\heartsuit, \text{lf}}) \xrightarrow{\widehat{\text{D}}(\alpha_*)} \widehat{\text{D}}(\Mod_R^{\heartsuit, \text{lf}})  \xrightarrow{\widehat{\text{D}}(\alpha_*)} \widehat{\text{D}}(\Mod_R^{\heartsuit, \text{lf}})  \xrightarrow{\widehat{\text{D}}(\alpha_*)} \ldots .
\]
Applying lemma \ref{lemma t structure colimit} we reduce to showing that the colimit of the system
\[
\Mod_R^{\heartsuit, \text{lf}} \xrightarrow{\alpha_*} \Mod_R^{\heartsuit, \text{lf}} \xrightarrow{\alpha_*} \Mod_R^{\heartsuit, \text{lf}} \xrightarrow{\alpha_*} \ldots
\]
is equivalent to $\Mod_k^\heartsuit$. Observe that the category $\Mod_R^{\heartsuit, \text{lf}}$ is compactly generated by those locally finite $R$-modules which are finite dimensional over $k$ (indeed, this corresponds to the fact that  $\QCoh(\B\GG_{a,k})^\heartsuit$ is compactly generated by finite dimensional representations). Passing to compact objects we reduce to showing that the colimit in $\Cat$ of the system
\[
(\Mod_R^{\heartsuit, \text{lf}})^\omega \xrightarrow{\alpha_*} (\Mod_R^{\heartsuit, \text{lf}})^\omega \xrightarrow{\alpha_*} (\Mod_R^{\heartsuit, \text{lf}})^\omega \xrightarrow{\alpha_*} \ldots
\]
is equivalent to $(\Mod_k^\heartsuit)^\omega$. We may enlarge the above diagram to a commutative diagram
\[
\begin{tikzcd}
(\Mod_R^{\heartsuit, \text{lf}})^\omega \arrow{r}{\alpha_*} & (\Mod_R^{\heartsuit, \text{lf}})^\omega \arrow{r}{\alpha_*} &  (\Mod_R^{\heartsuit, \text{lf}})^\omega \arrow{r}{\alpha_*} & \ldots  \\
(\Mod_k^\heartsuit)^\omega \arrow{r}{\id} \arrow{u}{} & (\Mod_k^\heartsuit)^\omega \arrow{r}{\id} \arrow{u}{} & (\Mod_k^\heartsuit)^\omega \arrow{r}{\id} \arrow{u}{} & \ldots
\end{tikzcd}
\]
where the vertical arrows are the functors of restriction of scalars along the morphism of $k$-algebras $\theta: R \rightarrow k$ sending $x_i$ to $0$ for all $i$. Since the vertical arrows are fully faithful, passing to the colimit of the rows we obtain a fully faithful functor $(\Mod_k^{\heartsuit})^\omega \rightarrow (\Mod_R^{\heartsuit, \text{lf}})^\omega $. To finish the proof it will suffice to show that this functor is surjective. This amounts to the assertion that if $M$ is an object of $(\Mod_R^{\heartsuit, \text{lf}})^\omega$, then for $n$ sufficiently large the $R$-module $\alpha_*^n(M)$ is obtained by restriction of scalars along $\theta$. Indeed, since $M$ is finite dimensional over $k$ and locally finite as an $R$-module, there exists $n$ such that $x_i$ acts by zero on $M$ for $i \geq n$. Then the action of $x_i$ on $\alpha^n_*(M)$ is zero for all $i$, which implies the desired assertion. 
\end{proof}

\begin{lemma}\label{lemma exists endofunctor derived}
Let $k$ be a field of positive characteristic. Then there exists an endofunctor $F: \D(\QCoh(\B \GG_{a,k})^\heartsuit) \rightarrow \D(\QCoh(\B\GG_{a,k})^\heartsuit)$, with a colimit preserving right adjoint, and with the property that the colimit  of the directed system
\[
\D(\QCoh(\B\GG_{a,k})^\heartsuit) \xrightarrow{F} \D(\QCoh(\B\GG_{a,k})^\heartsuit) \xrightarrow{F} \D(\QCoh(\B\GG_{a,k})^\heartsuit) \xrightarrow{F} \ldots 
\]
is equivalent to $\Mod_k$.
\end{lemma}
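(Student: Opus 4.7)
The plan is to run the argument of Lemma \ref{lemma exists endofunctor} essentially verbatim, replacing the completed derived category $\widehat{\D}$ by the ordinary derived category $\D$ throughout, and invoking the second (non-completed) assertion of Lemma \ref{lemma t structure colimit} at the final step rather than the first.

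Concretely, I would reuse the identification $\QCoh(\B\GG_{a,k})^\heartsuit \simeq \Mod_R^{\heartsuit, \text{lf}}$ with $R = k[x_1, x_2, \ldots]/(x_1^p, x_2^p, \ldots)$, and let $\alpha: R \to R$ denote the shift $x_i \mapsto x_{i+1}$. Define $F: \D(\Mod_R^{\heartsuit, \text{lf}}) \to \D(\Mod_R^{\heartsuit, \text{lf}})$ to be the unique colimit preserving t-exact extension of restriction of scalars $\alpha_*$. The existence of a colimit preserving right adjoint to $F$ is inherited from the abelian level by the same argument as before: because $R$ is free of rank $p$ as a module over itself via $\alpha$, extension of scalars $\alpha^*$ is simultaneously a left and right adjoint to $\alpha_*$, and derived passage produces the required right adjoint to $F$.

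For the colimit computation, I would apply the second equivalence of Lemma \ref{lemma t structure colimit}. Its hypotheses are satisfied here: the transition functor $\alpha_*$ is exact (hence in particular left exact), and its right adjoint is exact because it agrees with the flat functor $\alpha^*$ up to canonical equivalence, again using that $R$ is finite free over itself via $\alpha$. This reduces the assertion to identifying $\colim \Mod_R^{\heartsuit, \text{lf}}$ along $\alpha_*$ with $\Mod_k^\heartsuit$, which was already established in the course of the proof of Lemma \ref{lemma exists endofunctor} (via the explicit verification that any compact object of $\Mod_R^{\heartsuit, \text{lf}}$ is eventually annihilated by all $x_i$, hence pulled back along the augmentation $\theta: R \to k$). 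Combining these yields
\[
\colim\bigl(\D(\Mod_R^{\heartsuit, \text{lf}}) \xrightarrow{F} \D(\Mod_R^{\heartsuit, \text{lf}}) \xrightarrow{F} \ldots\bigr) \simeq \D(\Mod_k^\heartsuit) \simeq \Mod_k,
\]
as required.

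I do not foresee a genuine obstacle; the entire argument is a mechanical transcription of the previous lemma. The only point meriting a moment's care is the verification that the right adjoint to $\alpha_*$ on the locally finite abelian category is exact (so that the second part of Lemma \ref{lemma t structure colimit} applies), but this is immediate from the flatness of $R$ over itself via $\alpha$ noted above.
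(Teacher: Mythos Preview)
Your proposal is correct and matches the paper's approach exactly: the paper's proof reads in its entirety ``Completely analogous to the proof of lemma \ref{lemma exists endofunctor},'' and what you have written is precisely that transcription, with the second assertion of Lemma \ref{lemma t structure colimit} in place of the first.
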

\begin{proof}
Completely analogous to the proof of lemma \ref{lemma exists endofunctor}.
\end{proof}

\begin{proof}[Proof of theorem \ref{theorem non dualizability bga}]
We give the proof in the case of $\QCoh(\B\GG_{a,k})$; the proof in the case of $\D(\QCoh(\B\GG_{a, k})^\heartsuit)$ is analogous. Assume for the sake of contradiction that $\QCoh(\B\GG_{a,k})$ is dualizable. Combining lemma \ref{lemma exists endofunctor} with \cite{Fully} theorem 3.3.1 and proposition 3.2.9 we see that the colimit in $\Cat$ of the directed system
\[
\QCoh(\B\GG_{a,k})^\omega \xrightarrow{F} \QCoh(\B\GG_{a,k})^\omega \xrightarrow{F} \QCoh(\B\GG_{a,k})^\omega \xrightarrow{F} \ldots 
\]
is given by $\Mod_k^\omega$. In particular, since this colimit is non-zero we deduce the existence of a non-zero compact object in $\QCoh(\B\GG_{a,k})$. Applying \cite{HallNeemanRydh} proposition 3.1 we reach the desired contradiction.
\end{proof}


\subsection{Ind-coherent sheaves} \label{subsection indcoh}

Our goal in this section is to prove the following:

\begin{theorem}\label{theorem indcoh}
Let $X$ be a Noetherian algebraic stack with affine stabilizers.
\begin{enumerate}[\normalfont \indent (a)]
\item Assume that $X$ is well stabilized and has separated diagonal, or that $X$ is very well stabilized. Then $\IndCoh(X)$ is compactly generated.
\item If $\IndCoh(X)$ is dualizable and $\Ocal_X$ is truncated then $X$ is well stabilized.
\end{enumerate}
\end{theorem}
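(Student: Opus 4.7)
The plan for part (a) mirrors the proof of theorem \ref{theorem controlled}: after reducing to the classical case we proceed by Noetherian induction. For nonempty classical $X$, choose (in the spirit of lemma \ref{lemma controlled on open}, refined to carry $\IndCoh$-information) a dense open substack $U$ on which the result is already known, let $Z$ be its reduced complement, and combine a compact generator of $\IndCoh(U)$ with coherent extensions along the infinitesimal thickenings $Z_n \hookrightarrow X$ to produce a compact generator for $\IndCoh(X)$. Theorem \ref{theorem controlled} is the key input: it tells us that $X$ is controlled, which provides the bounds on $(-)^\wedge_Z$, on affine-cover pullbacks, and on $\Gamma(Z,-)$ that are required to run the coherent analogue of the recollement argument of lemma \ref{lemma open and closed part 2}. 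The main technical obstacle is translating the quasi-coherent bounds supplied by controlledness into uniform bounds at the level of coherent sheaves, so that the candidate compact generator for $\IndCoh(X)$ indeed generates — in particular, that every compact object on $X$ supported on $Z$ is hit, up to finite colimits, by coherent sheaves lifted from the $Z_n$.

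For part (b), I would argue the contrapositive. Assume $X$ is poorly stabilized; then, following the proof of corollary \ref{corollary non dualizability poorly stabilized}, \cite{HallNeemanRydh} lemma 4.2 produces a field $k$ of positive characteristic and a quasi-affine morphism $\psi : \B\GG_{a,k} \to X_{\cl}$. Because $\Ocal_X$ is truncated, the inclusion $i : X_{\cl} \to X$ is affine and of finite Tor-amplitude, so the pullback $i^*$ on $\IndCoh$ has a conservative, colimit-preserving right adjoint; composing with the analogous adjunction for the quasi-affine $\psi$ yields, as in corollary \ref{corollary non dualizability poorly stabilized} via \cite{Efimov} proposition 1.57, that dualizability of $\IndCoh(X)$ would force dualizability of $\IndCoh(\B\GG_{a,k})$.

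It therefore suffices to show that $\IndCoh(\B\GG_{a,k})$ is not dualizable. Since $\B\GG_{a,k}$ is a smooth classical algebraic stack, the Serre-duality equivalence $\Upsilon : \QCoh(\B\GG_{a,k}) \xrightarrow{\sim} \IndCoh(\B\GG_{a,k})$ (tensoring with the dualizing complex $\omega_{\B\GG_{a,k}}$, up to shift) identifies the two presentable stable categories; alternatively, the truncation hypothesis on $\Ocal$ lets us identify $\IndCoh(\B\GG_{a,k})$ with the left-completed derived category $\widehat{\D}(\QCoh(\B\GG_{a,k})^\heartsuit)$, whose non-dualizability is exactly the second half of theorem \ref{theorem non dualizability bga}. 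Either way one reaches the desired contradiction. The main obstacle of part (b) is establishing these identifications with sufficient care and verifying that the $\IndCoh$ pullback–pushforward adjunction along the quasi-affine morphism $\phi = i \circ \psi$ genuinely satisfies the conservativity and colimit-preservation hypotheses of \cite{Efimov} proposition 1.57 — which is where the truncation of $\Ocal_X$ is used in an essential way, to rule out derived contributions that would spoil the comparison with the classical picture underlying theorem \ref{theorem non dualizability bga}.
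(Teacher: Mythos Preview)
For part (b) your contrapositive strategy works, but it is more laborious than what the paper does. The paper observes directly that when $\Ocal_X$ is truncated the projection $\Psi_X:\IndCoh(X)\to\QCoh(X)$ admits a fully faithful left adjoint, so $\QCoh(X)$ is a retract of $\IndCoh(X)$ in $\Pr_\st$; dualizability of $\IndCoh(X)$ then immediately gives dualizability of $\QCoh(X)$, and corollary \ref{corollary non dualizability poorly stabilized} finishes. Your route instead pushes the whole argument up to $\IndCoh$ and must verify that $\IndCoh(\B\GG_{a,k})$ is not dualizable and that the $\IndCoh$ adjunction along $\phi$ behaves well --- correct, but unnecessary once you notice the retract.

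For part (a) there is a genuine gap. Your plan is to build a compact generator by hand, splicing a compact generator on $U$ with coherent sheaves pushed forward from the thickenings $Z_n$, and to control this using the formal-completion bounds from controlledness. But as remark \ref{remark compact is coherent} notes, coherent sheaves need not be compact in $\IndCoh(X)$, so ``coherent extensions along $Z_n$'' do not a priori give compact objects; the obstacle you flag (translating quasi-coherent bounds to coherent ones) is real and you do not indicate how to overcome it. The paper avoids this construction entirely by a two-step argument you are missing: first, it uses lemma \ref{lemma indcoh tensored up affine} to write $\IndCoh(X)=\Gamma(X,\IndCoh_X)$ for a dualizable object $\IndCoh_X\in\twoQCoh(X)$, and then invokes corollary \ref{corollary consequences controlled}(3) (i.e.\ $1$-affineness from controlledness) to conclude that $\IndCoh(X)$ is \emph{dualizable}. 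Second, with dualizability in hand, the Noetherian induction runs through the short exact sequence $\IndCoh(X)_Z\to\IndCoh(X)\to\IndCoh(U)$: the inductive hypothesis gives compact generation of $\IndCoh(Z)$ and hence of $\IndCoh(X)_Z$, lemma \ref{lemma indcoh locally} gives compact generation of $\IndCoh(U)$, and then \cite{Efimov} proposition 3.3 (which needs the middle term dualizable) yields compact generation of $\IndCoh(X)$. The dualizability step is the key idea you are missing; it is what replaces your attempted direct construction.
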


\begin{remark}\label{remark compact is coherent}
Let $X$ be a Noetherian algebraic stack. Then every compact object of $\IndCoh(X)$ is coherent: this follows from the fact that if $p:U \rightarrow X$ is an affine cover then $p^*: \IndCoh(X) \rightarrow \IndCoh(U)$ admits a colimit preserving right adjoint. Note that in general coherent sheaves are not necessarily compact in $\IndCoh(X)$.
\end{remark}

\begin{corollary}\label{coro derived category of heart}
Let $X$ be a classical Noetherian algebraic stack with affine diagonal. Then $\QCoh(X)$ is the derived $\infty$-category of its heart if and only if $X$ is well stabilized.
\end{corollary}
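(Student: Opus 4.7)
The plan is to reduce both directions of the corollary to the single condition that products in $\QCoh(X)$ have bounded cohomological amplitude, which bridges the results of this section with the machinery developed earlier.

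First I would recall that for a classical Noetherian algebraic stack with affine diagonal, the category $\QCoh(X)$ coincides with the left-completed derived $\infty$-category $\widehat{\D}(\QCoh(X)^\heartsuit)$ of its heart; this is a general fact about $\QCoh$ on classical stacks, implicitly invoked for $X = \B\GG_{a,k}$ in the proof of lemma \ref{lemma exists endofunctor}. With this identification in hand, the corollary becomes the statement that $\D(\QCoh(X)^\heartsuit) = \widehat{\D}(\QCoh(X)^\heartsuit)$ if and only if $X$ is well stabilized, i.e., that left completeness of $\D(\QCoh(X)^\heartsuit)$ characterizes well stabilized classical Noetherian stacks with affine diagonal. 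By a theorem of Roos, left completeness of $\D(\Acal)$ for a Grothendieck abelian category $\Acal$ is equivalent to $\Acal$ satisfying axiom AB4*-$k$ for some $k$, and under the equivalence $\QCoh(X) = \widehat{\D}(\QCoh(X)^\heartsuit)$ this in turn is equivalent to products in $\QCoh(X)$ having uniformly bounded cohomological amplitude.

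For the direction $X$ well stabilized $\Rightarrow \QCoh(X) = \D(\QCoh(X)^\heartsuit)$, I would note that the affine diagonal is in particular $0$-affine and separated, so theorem \ref{theorem controlled} yields that $X$ is controlled. Item (4) of corollary \ref{corollary consequences controlled} then provides the required bound on products in $\QCoh(X)$. For the direction $\QCoh(X) = \D(\QCoh(X)^\heartsuit) \Rightarrow X$ well stabilized, the translation step above produces the bound on products in $\QCoh(X)$, and since affine diagonal forces affine stabilizers, corollary \ref{coro products unbounded} applies and concludes.

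The main obstacle is setting up the two standard equivalences in the translation step: the identification $\QCoh(X) = \widehat{\D}(\QCoh(X)^\heartsuit)$ for a classical Noetherian stack with affine diagonal, and the Roos-type criterion relating left completeness of $\D(\Acal)$ to the AB4*-$k$ axiom on $\Acal$. Both facts are well known in the literature, but they must be invoked carefully and compatibly to produce the desired equivalence between left completeness of $\D(\QCoh(X)^\heartsuit)$ and boundedness of products in $\QCoh(X)$; once this is done, the proof assembles almost immediately from the results of subsections \ref{subsection 1aff for well stab} and \ref{subsection non dualizability}.
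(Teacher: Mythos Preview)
Your strategy is genuinely different from the paper's. The paper treats the two directions by separate means: the ``only if'' direction is dispatched by a direct citation to \cite{HallNeemanRydh} theorem 1.3, while the ``if'' direction is proved by invoking compact generation of $\IndCoh(X)$ (theorem \ref{theorem indcoh}) to exhibit, for each connective $\Fcal$, a surjection from an object of the heart. Your proposal instead funnels both directions through the single criterion ``products in $\QCoh(X)$ have bounded cohomological amplitude'', using the identification $\QCoh(X)=\widehat{\D}(\QCoh(X)^\heartsuit)$ together with a Roos-type result. For the ``if'' direction this works and is attractive: bounded products in $\QCoh(X)=\widehat{\D}(\QCoh(X)^\heartsuit)$ give AB4*-$k$ for the heart (since products of heart objects, being coconnective, are computed identically in $\D$ and $\widehat{\D}$), and Roos then yields left completeness of $\D(\QCoh(X)^\heartsuit)$, hence $\D=\widehat{\D}=\QCoh(X)$. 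This bypasses theorem \ref{theorem indcoh} entirely, and indeed the paper itself uses exactly this Roos-direction argument in the proof of the corollary immediately following.

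There is, however, a gap in your ``only if'' direction. You assert that left completeness of $\D(\Acal)$ is \emph{equivalent} to $\Acal$ satisfying AB4*-$k$ for some $k$, but the standard Roos theorem only supplies the implication AB4*-$k$ $\Rightarrow$ left complete. The converse, that left completeness forces a uniform bound on the cohomological amplitude of products, is not a general fact about Grothendieck abelian categories and is not what Roos proved. Thus the step ``$\QCoh(X)=\D(\QCoh(X)^\heartsuit)$ $\Rightarrow$ bounded products in $\QCoh(X)$'' is unjustified, and your appeal to corollary \ref{coro products unbounded} does not go through. The paper avoids this issue by citing \cite{HallNeemanRydh} directly; note also that in the proof of the subsequent corollary the paper deduces well-stabilizedness from left completeness not via a bound on products but by invoking the very corollary under discussion (whose ``only if'' direction rests on \cite{HallNeemanRydh}). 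If you want a self-contained argument using only the paper's tools, you would need to argue directly that $\D(\QCoh(X)^\heartsuit)$ fails to be dualizable when $X$ is poorly stabilized (which is part of theorem \ref{theorem non dualizability bga}) and then transport this along a quasi-affine map as in corollary \ref{corollary non dualizability poorly stabilized}---but this requires care about compatibility of $\D(-^\heartsuit)$ with pullback, and is essentially what \cite{HallNeemanRydh} does.
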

\begin{proof}
The only if direction follows from \cite{HallNeemanRydh} theorem 1.3. Suppose now that $X$ is well stabilized.  To show that $\QCoh(X)$ is the derived $\infty$-category of its heart it suffices to prove that every connective object $\mathcal{F}$ receives a surjection from an object in $\QCoh(X)^\heartsuit$.  Let $\Psi_X: \IndCoh(X) \rightarrow \QCoh(X)$ be the projection, and write $\mathcal{F} = \Psi_X(\mathcal{G})$ for some connective ind-coherent sheaf $\mathcal{G}$.  By theorem \ref{theorem indcoh} we may write $\mathcal{G}$ as the filtered colimit of a diagram of compact ind-coherent sheaves $\mathcal{G}_\alpha$. Each of these is a coherent sheaf by remark \ref{remark compact is coherent}, and in particular it is bounded. We now have that $\mathcal{F}$ is the filtered colimit of the truncated connective quasi-coherent sheaves $\Psi_X(\tau_{\geq 0}\mathcal{G}_\alpha)$, and in particular we have a surjection 
\[
\bigoplus_\alpha \Psi_X(\tau_{\geq 0}\mathcal{G}_\alpha) \rightarrow \mathcal{F}.
\]
It now suffices to show that each truncated connective quasi-coherent sheaf on $X$ receives a surjection from an object in the heart, which follows from \cite{SAG} proposition 10.4.6.6.
\end{proof}

 \begin{corollary}
 Let $X$ be a Noetherian algebraic stack with affine diagonal. Then the following are equivalent:
\begin{enumerate}[\normalfont\indent (i)]
\item Products in $\QCoh(X)$ have bounded cohomological amplitude.
\item Products in $\D(\QCoh(X)^\heartsuit)$ have bounded cohomological amplitude.
\item $X$ is well stabilized.
\end{enumerate} 
 \end{corollary}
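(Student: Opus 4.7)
The plan is to prove (i) $\Leftrightarrow$ (iii) from results already established in this section, to derive (iii) $\Rightarrow$ (ii) from (i) via corollary \ref{coro derived category of heart}, and to reduce (ii) $\Rightarrow$ (iii) to an adaptation of the proof of corollary \ref{coro products unbounded}.

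For (iii) $\Rightarrow$ (i), since affine diagonal is in particular $0$-affine and separated, corollary \ref{corollary consequences controlled} item (4) directly yields the bound on products in $\QCoh(X)$. For (i) $\Rightarrow$ (iii), this is corollary \ref{coro products unbounded}, noting that $X$ is qcqs (being Noetherian) and has affine stabilizers. For (iii) $\Rightarrow$ (ii), observe that the well-stabilized condition depends only on the classical truncation, so $X_\cl$ is a classical Noetherian stack with affine diagonal that is well stabilized; applying (iii) $\Rightarrow$ (i) to $X_\cl$ gives bounded products in $\QCoh(X_\cl)$, and corollary \ref{coro derived category of heart} identifies $\QCoh(X_\cl) \simeq \D(\QCoh(X_\cl)^\heartsuit) = \D(\QCoh(X)^\heartsuit)$, the last equality using $\QCoh(X)^\heartsuit = \QCoh(X_\cl)^\heartsuit$.

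The main obstacle is (ii) $\Rightarrow$ (iii). The plan is to replay the proofs of corollaries \ref{coro products unbounded} and \ref{corollary non dualizability poorly stabilized} verbatim with $\D(\QCoh(-)^\heartsuit)$ in place of $\QCoh(-)$. After reducing to $X = X_\cl$ classical, the (AB6) argument of \ref{coro products unbounded} should adapt to show that (ii) implies $\D(\QCoh(X)^\heartsuit)$ is dualizable: the only nontrivial input, compact generation of $\D(\QCoh(X)^\heartsuit)^\cn_{\leq N}$ for each $N$, follows from the weak coherence of $\QCoh(X)^\heartsuit$ already verified at the end of \ref{coro products unbounded}. Then, if $X$ were poorly stabilized, \cite{HallNeemanRydh} lemma 4.2 would provide a quasi-affine morphism $f: \B\GG_{a,k} \to X$, and the derived pullback-pushforward pair $(Lf^*, Rf_*)$ together with \cite{Efimov} proposition 1.57 would force $\D(\QCoh(\B\GG_{a,k})^\heartsuit)$ to be dualizable, contradicting theorem \ref{theorem non dualizability bga}.

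The delicate technical point is verifying that $Rf_*$ is both conservative and colimit preserving on derived-of-heart categories: since $\B\GG_{a,k}$ has infinite cohomological dimension in positive characteristic, one cannot appeal to bounded amplitude of derived pushforward, so preservation of filtered colimits does not come for free from coherence of $f$. Handling this will likely require combining the hypothesis (ii) with the weak coherence of $\QCoh(\B\GG_{a,k})^\heartsuit$ and a truncation-by-truncation analysis of $Rf_*$ against its Postnikov tower.
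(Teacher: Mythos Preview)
Your handling of (i) $\Leftrightarrow$ (iii) and (iii) $\Rightarrow$ (ii) matches the paper. For (ii) $\Rightarrow$ (iii), however, the paper takes a much shorter route that sidesteps the obstacle you flag entirely. The observation is that bounded products in $\D(\QCoh(X)^\heartsuit)$ force the canonical t-structure to be left complete; since $\QCoh(X_\cl)$ is always the left completion of $\D(\QCoh(X_\cl)^\heartsuit) = \D(\QCoh(X)^\heartsuit)$, this gives an equivalence $\D(\QCoh(X)^\heartsuit) \simeq \QCoh(X_\cl)$. The ``only if'' direction of corollary \ref{coro derived category of heart} (which is \cite{HallNeemanRydh} theorem 1.3) then shows $X_\cl$, hence $X$, is well stabilized. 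No dualizability argument on the derived-of-heart side is needed.

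Your proposed detour through (AB6) and Efimov's criterion may in fact be salvageable: the difficulty you raise is less severe than stated, because for a quasi-affine morphism $f$ the pushforward $f_*$ has bounded cohomological amplitude independently of the global cohomological dimension of the source. The infinite cohomological dimension of $\B\GG_{a,k}$ pertains to $\Gamma(\B\GG_{a,k},-)$, not to $f_*$ along a quasi-affine map out of it, so $Rf_*$ on derived-of-heart categories should preserve colimits after all. Still, the paper's argument shows this machinery is unnecessary: hypothesis (ii) already collapses $\D(\QCoh(X)^\heartsuit)$ onto $\QCoh(X_\cl)$, reducing everything to the implication (i) $\Rightarrow$ (iii) that you have already established.
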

 \begin{proof}
 The case of $\QCoh(X)$ was established in corollaries \ref{corollary consequences controlled} and \ref{coro products unbounded}, so it only remains to address the case of $\D(\QCoh(X)^\heartsuit)$. If $X$ is well stabilized then the boundedness of products in $\D(\QCoh(X)^\heartsuit)$ follows from the case of $\QCoh(X)$, by corollary \ref{coro derived category of heart}. Conversely, if products in $\D(\QCoh(X)^\heartsuit)$ have bounded cohomological amplitude then  the t-structure on $\D(\QCoh(X)^\heartsuit)$ is left complete, which implies that $\D(\QCoh(X)^\heartsuit) = \QCoh(X)$. Another application of corollary \ref{coro derived category of heart} now shows that $X$ is well stabilized, as desired. 
 \end{proof}

The remainder of this section is devoted to the proof of theorem \ref{theorem indcoh}.

\begin{lemma}\label{lemma finite surjective indcoh}
Let $f: X \rightarrow Y$ be a morphism between Noetherian algebraic stacks. Suppose that $f$ is affine and $f_*\Ocal_X$ is coherent. If $f$ is surjective and $\IndCoh(X)$ is compactly generated then $\IndCoh(Y)$ is compactly generated.
\end{lemma}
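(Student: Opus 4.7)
The plan is to construct compact generators of $\IndCoh(Y)$ by pushing forward compact generators of $\IndCoh(X)$ along $f_*$. The first observation is that the hypotheses on $f$ force it to be a \emph{finite} morphism: an affine morphism whose relative structure algebra $f_*\Ocal_X$ is coherent is the relative spectrum of a coherent $\Ocal_Y$-algebra. In particular $f$ is proper, so by Grothendieck-style duality for ind-coherent sheaves the colimit-preserving functor $f_*: \IndCoh(X) \rightarrow \IndCoh(Y)$ admits a continuous right adjoint $f^!$; equivalently, $f_*$ preserves compact objects.

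Next, I would pick a set $\{c_\alpha\}$ of compact generators of $\IndCoh(X)$ and propose that $\{f_* c_\alpha\}$ is a set of compact generators of $\IndCoh(Y)$. Compactness of each $f_*c_\alpha$ is immediate from the previous step, so the content is in the generation statement. For this, suppose $\Fcal \in \IndCoh(Y)$ satisfies $\Hom(f_* c_\alpha, \Fcal) = 0$ for every $\alpha$; the $(f_*, f^!)$-adjunction rewrites this as $\Hom(c_\alpha, f^! \Fcal) = 0$ for every $\alpha$, which forces $f^! \Fcal = 0$ since $\{c_\alpha\}$ generates $\IndCoh(X)$ under colimits.

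The main obstacle is to upgrade $f^!\Fcal = 0$ to $\Fcal = 0$, i.e.\ to establish conservativity of $f^!$ from surjectivity of $f$. I would handle this by reducing to two building blocks. For a finite \emph{flat} surjection, $f^!$ coincides with $f^*$ up to twisting by an invertible relative dualizing complex, and $f^*$ is conservative by faithfully flat descent. For a surjective closed immersion with nilpotent ideal sheaf $I$, the functor $f^!$ is essentially the $I$-torsion functor, and its conservativity follows from a short Nakayama-style argument exploiting the nilpotence of $I$. A general finite surjection $f$ can be stratified on $Y$ so that over each stratum one of these two building blocks applies; since conservativity of $f^!$ is a local question on $Y$, this reduces the general case to the two building blocks. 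Combining this conservativity with the preceding paragraph completes the proof.
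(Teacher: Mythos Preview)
Your proposal is correct and follows the same approach as the paper: both arguments rest on the observation that $f_*: \IndCoh(X) \rightarrow \IndCoh(Y)$ has a colimit preserving and conservative right adjoint, whence pushing forward compact generators of $\IndCoh(X)$ yields compact generators of $\IndCoh(Y)$. The paper's proof simply asserts this fact in one line, whereas you unpack the existence of the continuous right adjoint (via finiteness of $f$) and sketch a conservativity argument for $f^!$; your stratification route to conservativity is somewhat more elaborate than necessary, but it is along standard lines and valid.
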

\begin{proof}
Follows from the fact that the functor of pushforward $\IndCoh(X) \rightarrow \IndCoh(Y)$ has a colimit preserving and conservative right adjoint.
\end{proof}

The following lemma is well known, and included only for completeness:

\begin{lemma}\label{lemma indcoh tensored up affine}
Let $S \rightarrow T$ be a smooth morphism of classical Noetherian affine schemes. Then the canonical functor
\[
\IndCoh(S) \otimes_{\QCoh(S)} \QCoh(T) \rightarrow \IndCoh(T)
\]
is an equivalence.
\end{lemma}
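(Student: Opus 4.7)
The plan is to show the equivalence by comparing compact objects on both sides. Since $\QCoh(S) = \mathrm{Mod}_{\Ocal_{S}}$ is rigidly compactly generated by its unit, the tensor product $\IndCoh(S) \otimes_{\QCoh(S)} \QCoh(T)$ is itself compactly generated, with compact objects identified with the idempotent completion of the $\mathrm{Perf}(S)$-linear tensor product $\mathrm{Coh}(S) \otimes_{\mathrm{Perf}(S)} \mathrm{Perf}(T)$. The canonical functor sends $\Fcal \otimes \Gcal$ to $f^{*}\Fcal \otimes_{\Ocal_{T}} \Gcal$, which lies in $\mathrm{Coh}(T) = \IndCoh(T)^{\omega}$ since the morphism is flat of finite presentation (so the pullback is t-exact and preserves coherence). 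The problem therefore reduces to showing that the induced functor
\[
\mathrm{Coh}(S) \otimes_{\mathrm{Perf}(S)} \mathrm{Perf}(T) \longrightarrow \mathrm{Coh}(T)
\]
is an equivalence of small stable $\infty$-categories.

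For fully faithfulness, the key point is that every object of $\mathrm{Perf}(T)$ is dualizable in $\QCoh(T)$, which yields the Künneth-style identification
\[
\Hom(\Fcal_{1} \otimes \Gcal_{1}, \Fcal_{2} \otimes \Gcal_{2}) \simeq \Hom_{\QCoh(S)}(\Fcal_{1}, \Fcal_{2}) \otimes_{\Ocal_{S}} \Hom_{\QCoh(T)}(\Gcal_{1}, \Gcal_{2})
\]
on the source. The Hom in $\IndCoh(T)$ between $f^{*}\Fcal_{1} \otimes \Gcal_{1}$ and $f^{*}\Fcal_{2} \otimes \Gcal_{2}$ unwinds, via the $f^{*} \dashv f_{*}$ adjunction and the projection formula for $f$, to the same expression, so the functor is fully faithful on compact generators.

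The main content is essential surjectivity. I plan to use Zariski descent for $\IndCoh$ and $\QCoh$ (compatible with the $\QCoh(S)$-linear tensor product on the source by rigidity of $\QCoh(S)$) to reduce to the case where the morphism factors as $T \xrightarrow{g} \mathbb{A}^{n}_{S} \xrightarrow{\pi} S$ with $g$ étale, and treat the two factors separately. For the projection $\pi$, an induction on $n$ reduces to $\mathbb{A}^{1}_{S} = \Spec A[x]$; here every coherent $A[x]$-module $M$ is generated over $A[x]$ by a coherent $A$-submodule $M_{0}$, yielding a surjection $M_{0} \otimes_{A} A[x] \twoheadrightarrow M$ whose kernel, by virtue of the length-one Koszul resolution of the diagonal of $A[x]$ over $A$, admits an analogous presentation. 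Iterating finitely many times exhibits $M$ as a finite colimit of modules of the form $N \otimes_{A} A[x]$ with $N \in \mathrm{Coh}(A)$. For $g$ étale, one further reduces to the finite étale case, where $\mathrm{Coh}(T)$ is identified with the category of modules over the finite étale $\Ocal_{\mathbb{A}^{n}_{S}}$-algebra $g_{*}\Ocal_{T}$, matching the tensor product description.

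I expect the main obstacle to be essential surjectivity in the affine-space case, specifically bounding the length of the iterative resolution, which depends crucially on the finite relative Tor-dimension afforded by smoothness. Verifying compatibility of Zariski descent with the $\QCoh(S)$-linear tensor product on the source also warrants care, though it ultimately follows from the rigidity of $\QCoh(S)$.
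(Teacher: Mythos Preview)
Your approach is essentially correct but takes a longer route than the paper's. For fully faithfulness the paper argues via Grothendieck prestable categories: the connective parts of source and target have the same completion (namely $\QCoh(T)^{\cn}$), and the source is compactly generated by the truncated objects $\Fcal \otimes_{\Ocal(S)} \Ocal(T)$, which map to truncated objects; this forces the functor on connective parts to be fully faithful. Your dualizability/projection-formula argument is equally valid and arguably more transparent. For essential surjectivity, however, the paper avoids Zariski descent and the \'etale/affine-space factorization entirely: smoothness of $T$ over $S$ means the diagonal $\Ocal(T)$ is perfect as an $\Ocal(T)\otimes_{\Ocal(S)}\Ocal(T)$-bimodule, so every $\Gcal \in \QCoh(T)$ lies in the thick subcategory generated by objects of the form $\Ocal(T) \otimes_{\Ocal(S)} \Gcal$. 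Applied to coherent (or eventually coconnective) $\Gcal$, this immediately shows the image of $\IndCoh(S) \to \IndCoh(T)$ generates. Your Koszul argument for $\mathbb{A}^1_S$ is exactly this observation in the special case of relative dimension one; you can run it directly for any smooth $T/S$ without factoring.

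One genuine imprecision in your plan: an \'etale morphism of affines is not Zariski-locally finite \'etale (an open immersion is already a counterexample). What is true is that it is Zariski-locally standard \'etale, i.e.\ an open subscheme of a finite flat $S$-scheme, so your reduction needs to also handle open immersions explicitly. That case is easy (both sides are compatible smashing localizations), but it should be stated rather than swept into ``finite \'etale.''
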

\begin{proof}
We first show that the map in the statement is fully faithful. We will do so by showing that the functor $\IndCoh(S)^\cn \rightarrow \IndCoh(T)^\cn$ on connective subcategories is fully faithful. Consider the commutative square of presentable categories
\[
\begin{tikzcd}
\IndCoh(S)^\cn \otimes_{\QCoh(S)^\cn} \QCoh(T)^\cn \arrow{r}{} \arrow{d}{} & \IndCoh(T)^\cn \arrow{d}{} \\
\QCoh(S)^\cn \otimes_{\QCoh(S)^\cn} \QCoh(T)^\cn \arrow{r}{} & \QCoh(T)^\cn
\end{tikzcd}
\]
where the vertical arrows are induced by the projections $\IndCoh(S)^\cn \rightarrow \QCoh(S)^\cn$ and $\IndCoh(T)^\cn \rightarrow \QCoh(T)^\cn$. Note that all the categories in this square are Grothendieck prestable, the vertical arrows induce equivalences of completions, and the bottom horizontal arrow is an equivalence. Consequently, our claim will follow if we show that the top left category is compactly generated by a set of truncated objects, and the top horizontal arrow sends these objects to truncated objects of $\IndCoh(T)^\cn$. Indeed, this is the case for the generators $\Fcal \otimes_{\Ocal(S)} \Ocal(T)$.

It remains to show that the map in the statement is surjective. To see this it will suffice to show that the image of the pullback functor $\IndCoh(S) \rightarrow \IndCoh(T)$ generates $\IndCoh(T)$ under colimits. The smoothness of $S \rightarrow T$ guarantees that the diagonal bimodule of $\Ocal(T)$ is compact, and hence every object $\Gcal$ in $\QCoh(T)$ belongs to the closure under finite colimits, shifts and retracts of the objects $\Ocal(T) \otimes_{\Ocal(S)} \Gcal$. Specializing this assertion to eventually coconnective objects of $\QCoh(T)$ we deduce that the closure under colimits of the image of the functor $\IndCoh(S) \rightarrow \IndCoh(T)$ contains all eventually coconnective objects of $\IndCoh(T)$. The desired assertion now follows from the fact that $\IndCoh(T)$ is generated under colimits by its eventually coconnective objects.
\end{proof}

\begin{lemma}\label{lemma tensor to open}
Let $X$ be a classical Noetherian algebraic stack, and let $Y$ be a quasi-compact open substack of $X$. Then the canonical functor
\[
\IndCoh(X) \otimes_{\QCoh(X)} \QCoh(Y) \rightarrow \IndCoh(Y)
\]
is an equivalence.
\end{lemma}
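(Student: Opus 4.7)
Proof proposal:

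Let $j: Y \hookrightarrow X$ be the open immersion with reduced closed complement $Z \subseteq X$, and write $\QCoh_Z(X) \subseteq \QCoh(X)$ and $\IndCoh_Z(X) \subseteq \IndCoh(X)$ for the kernels of $j^*_{\QCoh}$ and $j^*_{\IndCoh}$ respectively. Standard properties of quasi-compact open immersions into Noetherian algebraic stacks ensure that each $j^*$ is a Bousfield localization with fully faithful right adjoint, so
\[
\QCoh(Y) = \QCoh(X)/\QCoh_Z(X), \qquad \IndCoh(Y) = \IndCoh(X)/\IndCoh_Z(X).
\]
My plan is to identify $\IndCoh(X) \otimes_{\QCoh(X)} \QCoh(Y)$ as a similar Bousfield localization of $\IndCoh(X)$ and then to check that the two classes of objects inverted match.

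Since $\IndCoh(X) \otimes_{\QCoh(X)} (-)$ is a left adjoint, tensoring the cofiber sequence $\QCoh_Z(X) \to \QCoh(X) \to \QCoh(Y)$ of $\QCoh(X)$-modules yields
\[
\IndCoh(X) \otimes_{\QCoh(X)} \QCoh(Y) = \IndCoh(X)/\Lcal,
\]
where $\Lcal \subseteq \IndCoh(X)$ is the localizing subcategory generated by the essential image of the action map $\QCoh_Z(X) \otimes \IndCoh(X) \to \IndCoh(X)$. Under this identification the canonical functor of the lemma becomes the natural comparison $\IndCoh(X)/\Lcal \to \IndCoh(X)/\IndCoh_Z(X)$, and is an equivalence precisely when $\Lcal = \IndCoh_Z(X)$.

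The inclusion $\Lcal \subseteq \IndCoh_Z(X)$ is immediate from the projection formula: $j^*(\Fcal \otimes \Gcal) = j^*\Fcal \otimes j^*\Gcal$ vanishes whenever $\Gcal \in \QCoh_Z(X)$. For the reverse inclusion I would invoke the formal-completion identification of $\IndCoh_Z(X)$ with the colimit $\colim_n \IndCoh(Z_n)$ along the infinitesimal thickenings of $Z$, which exhibits $\IndCoh_Z(X)$ as generated as a localizing subcategory by the coherent sheaves set-theoretically supported on $Z$. It then suffices to show that every such coherent sheaf $\Hcal$ lies in $\Lcal$. Since $\Hcal$ is coherent and $j^*\Hcal = 0$, it lies simultaneously in $\QCoh_Z(X)$ via the natural inclusion of coherent sheaves into $\QCoh(X)$ and in $\IndCoh(X)$ via the natural inclusion of coherent sheaves into $\IndCoh(X)$, and the $\QCoh(X)$-action on $\IndCoh(X)$ satisfies $\Hcal \otimes_{\Ocal_X} \Ocal_X = \Hcal$, placing $\Hcal$ in $\Lcal$. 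The main external input is the formal-completion description of $\IndCoh_Z(X)$, which in particular guarantees that the coherent sheaves supported on $Z$ generate $\IndCoh_Z(X)$ under colimits; once this and the localization descriptions of $\QCoh(Y)$ and $\IndCoh(Y)$ are in hand, the rest is the formal machinery of Bousfield localizations of presentable stable categories and preservation of cofibers by tensor product.
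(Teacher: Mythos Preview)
Your reduction to proving $\Lcal = \IndCoh_Z(X)$ is a reasonable strategy, and the inclusion $\Lcal \subseteq \IndCoh_Z(X)$ is fine. The gap is in the reverse inclusion, specifically in the sentence ``the $\QCoh(X)$-action on $\IndCoh(X)$ satisfies $\Hcal \otimes_{\Ocal_X} \Ocal_X = \Hcal$''. Here the left-hand $\Hcal$ lives in $\QCoh(X)$ and $\Ocal_X$ in $\IndCoh(X)$, so this tensor product is exactly $\Xi_X(\Hcal)$, where $\Xi_X$ is the left adjoint to $\Psi_X : \IndCoh(X) \to \QCoh(X)$; indeed both $\Gcal \mapsto \Gcal \otimes \Ocal_X^{\IndCoh}$ and $\Xi_X$ are $\QCoh(X)$-linear colimit-preserving functors sending $\Ocal_X$ to $\Ocal_X^{\IndCoh}$. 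But $\Xi_X(\Hcal)$ is \emph{not} $\Hcal^{\IndCoh}$ unless $\Hcal$ lies in the localizing subcategory of $\IndCoh(X)$ generated by perfect complexes. When $X$ is singular along $Z$ (say $X = \Spec k[x,y]/(xy)$ with $Z$ the origin and $\Hcal$ the residue field) this fails: if $\Xi_X(\Hcal) = \Hcal^{\IndCoh}$ held for all coherent $\Hcal$ then $\Xi_X$ would be an equivalence, contradicting singularity. So your argument, as written, does not place $\Hcal^{\IndCoh}$ in $\Lcal$.

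The strategy can be repaired: rather than acting on $\Ocal_X^{\IndCoh}$, act on the given $\Fcal \in \IndCoh_Z(X)$ itself. Tensor the fiber sequence $\Gamma_Z\Ocal_X \to \Ocal_X \to j_*\Ocal_Y$ in $\QCoh(X)$ with $\Fcal$; the first term lands in $\Lcal$, and the last term vanishes because for $\Fcal = (i_n)_*^{\IndCoh}\Gcal$ the projection formula gives $j_*\Ocal_Y \otimes (i_n)_*^{\IndCoh}\Gcal = (i_n)_*^{\IndCoh}(i_n^* j_*\Ocal_Y \otimes \Gcal)$, and $i_n^* j_*\Ocal_Y = 0$ by base change along the empty intersection $Y \times_X Z_n$. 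This yields $\Fcal \simeq \Gamma_Z\Ocal_X \otimes \Fcal \in \Lcal$. Note, by contrast, that the paper does not argue this way at all: it proves the lemma by \v{C}ech descent along an affine cover of $X$, commuting the relative tensor product past the totalization, and iterating until both $X$ and $Y$ are affine, where the smooth case (Lemma~\ref{lemma indcoh tensored up affine}) applies directly.
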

\begin{proof}
Let $p: U \rightarrow X$ be an affine cover, and let $U_\bullet$ be the \v{C}ech nerve of $p$. Our goal is to show that the canonical functor
\[
(\Tot \IndCoh(U_\bullet) ) \otimes_{\QCoh(X)} \QCoh(Y) \rightarrow \Tot(\IndCoh(Y \times_X U_\bullet))
\]
is an equivalence. The left hand side is equivalent to
\[
\Tot (\IndCoh(U_\bullet) \otimes_{\QCoh(X)} \QCoh(Y) ) = \Tot (\IndCoh(U_\bullet) \otimes_{\QCoh(U_\bullet)} \QCoh(Y \times_X U_\bullet))
\]
so the lemma will follow if we show that the canonical map
\[
\IndCoh(U_\bullet) \otimes_{\QCoh(U_\bullet)} \QCoh(Y \times_X U_\bullet) \rightarrow \IndCoh(Y \times_X U_\bullet)
\]
is an equivalence. Replacing $X$ by $U_n$ we may now assume that $X$ is an algebraic space. Repeating this argument again we may reduce to the case when $X$ is a quasi-affine scheme, and then once again reduces us to the case when $X$ is affine. In this case the desired assertion follows from an application of lemma \ref{lemma indcoh tensored up affine}.
\end{proof}

\begin{lemma}\label{lemma indcoh tensored up}
Let $S$ be a classical Noetherian affine scheme and let $X$ be a smooth classical algebraic stack over $S$. Then the canonical functor
\[
\IndCoh(S) \otimes_{\QCoh(S)} \QCoh(X) \rightarrow \IndCoh(X)
\]
is an equivalence.
\end{lemma}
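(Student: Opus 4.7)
The strategy is to prove the lemma by descent along a smooth affine atlas of $X$, reducing ultimately to Lemma \ref{lemma indcoh tensored up affine}. Choose a smooth affine cover $p: U \to X$ and form its \v{C}ech nerve $U_\bullet$. Since the diagonal of $X$ is representable by algebraic spaces, each $U_n$ is a qcqs algebraic space, and it is smooth over $S$ via a projection to $U$. Smooth descent for both $\QCoh$ and $\IndCoh$ gives $\QCoh(X) = \Tot \QCoh(U_\bullet)$ and $\IndCoh(X) = \Tot \IndCoh(U_\bullet)$.

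The first step is to establish the lemma for qcqs algebraic spaces $Y$ smooth over $S$. Given such a $Y$, one takes an \'etale cover by finitely many affine opens and inducts on the number of charts, using Lemma \ref{lemma tensor to open} at each inductive step to handle quasi-compact open immersions in a Mayer--Vietoris square; the base case is an affine scheme smooth over $S$, handled by Lemma \ref{lemma indcoh tensored up affine}. Applied termwise to the \v{C}ech nerve, this produces compatible equivalences $\IndCoh(U_n) \simeq \IndCoh(S) \otimes_{\QCoh(S)} \QCoh(U_n)$.

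The second step is to descend these equivalences from $U_\bullet$ back to $X$. Combining them with the descent identities yields a chain of maps
\[
\IndCoh(S) \otimes_{\QCoh(S)} \QCoh(X) \longrightarrow \Tot\bigl(\IndCoh(S) \otimes_{\QCoh(S)} \QCoh(U_\bullet)\bigr) \xrightarrow{\sim} \Tot \IndCoh(U_\bullet) \xrightarrow{\sim} \IndCoh(X),
\]
and the task is to prove that the first arrow is an equivalence. This is the main technical obstacle, since tensor products of presentable categories do not in general commute with totalizations. I expect to resolve it by exploiting the $t$-exactness of smooth pullback: the cosimplicial object $\QCoh(U_\bullet)$ has uniformly bounded cohomological amplitude as a diagram of $\QCoh(S)$-modules, so the $\Tot$ can be approximated by a system of finite Postnikov truncations. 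Finite limits do commute with the tensor product, and passing to the limit over these truncations (using that both sides are complete in the appropriate $t$-structure) yields the desired equivalence.
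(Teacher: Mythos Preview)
Your overall plan---\v{C}ech descent along a smooth cover to reduce to the affine case of Lemma~\ref{lemma indcoh tensored up affine}---is exactly the paper's approach. The paper simply iterates the \v{C}ech argument three times (stack $\to$ algebraic space $\to$ quasi-affine scheme $\to$ affine scheme) rather than treating the algebraic-space case separately via Mayer--Vietoris; that is a cosmetic difference. Note, however, that a qcqs algebraic space is not Zariski-locally affine in general, so your ``\'etale cover by finitely many affine opens'' together with Mayer--Vietoris for open immersions does not quite work as stated; iterating the \v{C}ech descent is the cleaner route.

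The genuine gap is in your second step. The assertion that ``finite limits commute with the tensor product'' is false in $\Mod_{\QCoh(S)}(\Pr^L_{\st})$: the functor $\IndCoh(S)\otimes_{\QCoh(S)}(-)$ is a left adjoint, so it preserves colimits, not limits---not even finite ones. Your Postnikov workaround is not well-posed either: there is no a priori t-structure on $\IndCoh(S)\otimes_{\QCoh(S)}\QCoh(X)$ to truncate against, and ``bounded cohomological amplitude'' of a cosimplicial diagram of \emph{categories} has no evident meaning.

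The paper dispatches this step in one line (``the morphism in the statement is the totalization of\ldots''), implicitly using either of the following equivalent observations. First, $\QCoh(S)$ is rigid (compactly generated by perfect complexes, which are dualizable) and $\IndCoh(S)$ is compactly generated; over a rigid base this forces $\IndCoh(S)$ to be dualizable as a $\QCoh(S)$-module, so $\IndCoh(S)\otimes_{\QCoh(S)}(-)$ preserves \emph{all} limits. Second, and equivalently, every arrow in the cosimplicial diagram $\QCoh(U_\bullet)$ admits a $\QCoh(S)$-linear colimit-preserving right adjoint (pushforward, via the projection formula), so the totalization coincides with the geometric realization of the adjoint simplicial diagram, and tensor products commute with colimits. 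Either remark replaces your t-structure argument entirely.
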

\begin{proof}
Let $p: U \rightarrow X$ be a smooth affine cover, and let $U_\bullet$ be the \v{C}ech nerve of $p$. Then the morphism in the statement is the totalization of the morphism
\[
\IndCoh(S) \otimes_{\QCoh(S)} \QCoh(U_\bullet) \rightarrow \IndCoh(U_\bullet).
\]
Replacing $X$ by $U_n$ we may now assume that $X$ is an algebraic space. Repeating this argument again we may reduce to the case when $X$ is a quasi-affine scheme, and then once again reduces us to the case when $X$ is affine, which holds by lemma \ref{lemma indcoh tensored up affine}.
\end{proof}

\begin{lemma}\label{lemma indcoh locally on BG}
Let $S$ be an integral Noetherian affine scheme and let $G$ be an algebraic group over $S$ with affine fibers. Assume that $G$ is good. Then there exists a dense open subscheme $U$ of $S$ with the property that $\IndCoh(\B G_U)$ is compactly generated. 
\end{lemma}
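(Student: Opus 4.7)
The plan is to follow the proof of lemma \ref{lemma locally on BG} structurally, substituting lemma \ref{lemma finite surjective indcoh} for lemma \ref{lemma finite flat}, and using lemma \ref{lemma indcoh tensored up} in place of proposition \ref{proposition quasi-affine over controlled}. The key observation is that $\B G_U \to U$ is smooth (since $G$ is smooth over $S$, as part of being good), so lemma \ref{lemma indcoh tensored up} directly expresses $\IndCoh(\B G_U)$ as a tensor product of two compactly generated categories over $\QCoh(U)$.

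After possibly shrinking $S$, proposition \ref{proposition structure G} provides one of two cases. In case (a), one has a quasi-affine morphism $\B G_U \to \B\GL_{n,\QQ}$. In case (b), one has a finite flat surjection $\B G_{U'} \to \B G_U$ coming from a base change $U' \to U$, together with a torus subgroup $\GG_{m,U'}^n \hookrightarrow G_{U'}$ with finite flat quotient, producing a further finite flat surjection $\B\GG_{m,U'}^n \to \B G_{U'}$. In case (b), two applications of lemma \ref{lemma finite surjective indcoh} reduce the problem to compact generation of $\IndCoh(\B\GG_{m,U'}^n)$.

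In each case, smoothness of the relevant classifying stack over a Noetherian affine base allows me to invoke lemma \ref{lemma indcoh tensored up}. For instance, in case (a),
\[
\IndCoh(\B G_U) = \IndCoh(U) \otimes_{\QCoh(U)} \QCoh(\B G_U),
\]
and similarly in case (b), with $U$ replaced by $U'$ and $\B G_U$ by $\B\GG_{m,U'}^n$. The first factor is $\Ind(\Coh(U))$, compactly generated since $U$ is Noetherian affine. The second factor $\QCoh(\B G_U)$ is compactly generated: in case (a), by the pullbacks along the quasi-affine map of the finite-dimensional $\GL_n$-representations (which compactly generate $\QCoh(\B\GL_{n,\QQ})$), and in case (b), by the characters of $\GG_m^n$. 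Crucially, each of these compact generators is finite locally free as a module over the base, hence dualizable in $\QCoh(U)$ (respectively $\QCoh(U')$).

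The main obstacle will be deducing compact generation of the tensor product $\IndCoh(U) \otimes_{\QCoh(U)} \QCoh(\B G_U)$ from compact generation of both factors. This relies on the dualizability of the chosen compact generators of the second factor as $\QCoh(U)$-modules: tensoring with a dualizable object preserves compactness, so the family $\{C \otimes_{\Ocal_U} D_j\}_{C,j}$, with $C$ ranging over $\Coh(U)$ and $D_j$ over a compact, dualizable generating set of $\QCoh(\B G_U)$, forms a compact generating set for the tensor product, completing the argument.
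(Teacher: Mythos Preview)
Your proposal is correct and follows essentially the same route as the paper: apply proposition \ref{proposition structure G}, in case (a) use the quasi-affine map to $\B\GL_{n,\QQ}$ to get compact generation of $\QCoh(\B G_U)$ and then invoke lemma \ref{lemma indcoh tensored up}, and in case (b) reduce via two applications of lemma \ref{lemma finite surjective indcoh} to $\B\GG_{m,U'}^n$ and again invoke lemma \ref{lemma indcoh tensored up}. Your extra paragraph on why the relative tensor product is compactly generated (via dualizability of the generating vector bundles over the base) is more explicit than the paper, which simply asserts the conclusion.

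One small correction: your parenthetical ``since $G$ is smooth over $S$, as part of being good'' is not accurate as stated. Goodness does not guarantee smoothness of $G$ in general. In case (a) smoothness of $\B G_U \to U$ holds because $U$ lies over $\QQ$ and algebraic groups in characteristic zero are smooth; in case (b) you never need $G_{U'}$ itself to be smooth, only $\GG_{m,U'}^n$, which of course is. Your case-by-case argument already handles this correctly, so just drop or amend that opening justification.
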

\begin{proof}
We apply proposition \ref{proposition structure G}, so that one of conditions (a) or (b) applies.  Suppose first that condition (a) applies. Then we have a quasi-affine map $f: \B G_U \rightarrow \B \GL_{n, \QQ}$. Since the functor $f^*: \QCoh(\B \GL_{n,\QQ}) \rightarrow \QCoh(\B G_U)$ admits a colimit preserving and conservative right adjoint and $\QCoh(\B \GL_{n,\QQ})$ is compactly generated, we deduce that $\QCoh(\B G_U)$ is compactly generated. Since algebraic groups in characteristic zero are smooth we see that the projection $\B G_U \rightarrow U$ is smooth. We now have  $\IndCoh(\B G_U) = \IndCoh(U) \otimes_{\QCoh(U)} \QCoh(\B G_U)$ by lemma \ref{lemma indcoh tensored up} and hence $\IndCoh(\B G_U)$ is also compactly generated, as desired.

Assume now that condition (b) applies. Then the projection $\B G_{U'} \rightarrow \B G_U$ is a finite flat surjection, so by lemma \ref{lemma finite surjective indcoh} it suffices to show that $\IndCoh(\B G_{U'})$ is compactly generated. Fix a subgroup inclusion $\mathbb{G}_{m,U'}^n \rightarrow G_{U'}$ whose quotient is finite flat. Then the induced map $\B \mathbb{G}_{m, U'}^n \rightarrow \B G_{U'}$ is a finite flat surjection. Another application of lemma \ref{lemma finite surjective indcoh} reduces the problem to showing that $\IndCoh(\B\mathbb{G}_{m, U'}^n)$ is compactly generated. The projection $\B \GG_{m, U'}^n \rightarrow U'$ is smooth, so applying lemma \ref{lemma indcoh tensored up} we deduce that $\IndCoh(\B \GG_{m, U'}^n) = \IndCoh(U') \otimes_{\QCoh(U')} \QCoh(\B \GG_{m, U'}^n)$. The desired claim now follows from the fact that $\QCoh(\B \GG_{m, U'}^n) = \QCoh(U')^{\ZZ^n}$ is compactly generated.
\end{proof}

\begin{lemma}\label{lemma indcoh locally}
Let $X$ be a classical Noetherian algebraic stack with affine stabilizers. If $X$ is well stabilized then there exists a dense open substack of $X$ whose category of ind-coherent sheaves is compactly generated.
\end{lemma}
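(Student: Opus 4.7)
The proof plan is to mirror the structure of the proof of lemma \ref{lemma controlled on open}, substituting lemma \ref{lemma finite surjective indcoh} for the roles played there by part (4) of lemma \ref{lemma bounds red} and by lemma \ref{lemma finite flat}, substituting lemma \ref{lemma indcoh locally on BG} for lemma \ref{lemma locally on BG}, and using lemma \ref{lemma tensor to open} to replace the appeal to the quasi-affine-over-controlled principle.

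First, I would reduce to the case when $X$ is reduced. The closed immersion $X_{\mathrm{red}} \hookrightarrow X$ is a finite affine surjection with coherent structure sheaf pushforward (its defining ideal being finitely generated and nilpotent in the Noetherian setting), so lemma \ref{lemma finite surjective indcoh} lets me transfer compact generation of $\IndCoh$ from a dense open of $X_{\mathrm{red}}$ to the corresponding dense open of $X$. Then, applying \cite{stacks-project} propositions 06RC and 06NH, I may after shrinking to a dense open assume that $X$ is a gerbe over a classical affine scheme $S$. Restricting to a dense open of $S$ ensures the connected components of $S$ are irreducible, and restricting to a single such component reduces to $S$ irreducible; since $X \to S$ is faithfully flat and $X$ is reduced, $S$ is reduced as well, hence integral.

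Next, following the construction in the controlled case, I would produce a finite flat affine cover $p: S' \to S$ with $S'$ integral and generic fiber the spectrum of a field, and such that the gerbe $X$ acquires a section over $S'$. Then $X' := S' \times_S X \cong \B G$ for an algebraic group $G/S'$ with affine fibers. By proposition \ref{proposition algebraic space over well stabilized}, $X'$ is well stabilized, and proposition \ref{proposition BG well stabilized iff G good} then shows that $G$ is good. Applying lemma \ref{lemma indcoh locally on BG} gives a dense open $U' \subseteq S'$ with $\IndCoh(\B G_{U'})$ compactly generated.

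Since $p$ is open, I may find a dense affine open $U \subseteq S$ with $p^{-1}(U) \subseteq U'$. By lemma \ref{lemma tensor to open},
\[
\IndCoh(\B G_{p^{-1}(U)}) = \IndCoh(\B G_{U'}) \otimes_{\QCoh(\B G_{U'})} \QCoh(\B G_{p^{-1}(U)}),
\]
and since the pullback along a quasi-compact open immersion is a finite localization under which compact generation passes to the tensor product, $\IndCoh(\B G_{p^{-1}(U)})$ is compactly generated. Finally, the projection $\B G_{p^{-1}(U)} \to U \times_S X$ is the base change of $p$ along $U \times_S X \to S$, hence a finite flat affine surjection with coherent structure sheaf pushforward; a last application of lemma \ref{lemma finite surjective indcoh} then yields that $\IndCoh(U \times_S X)$ is compactly generated, with $U \times_S X$ a dense open substack of $X$. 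The main obstacle is justifying the open-immersion step: that lemma \ref{lemma tensor to open} combined with compact generation of $\IndCoh(\B G_{U'})$ produces compact generation of $\IndCoh(\B G_{p^{-1}(U)})$. This rests on the standard fact that the pullback along a quasi-compact open immersion of Noetherian stacks is a Bousfield localization whose kernel is generated by compact objects, so compact generators descend; I would spell this out carefully.
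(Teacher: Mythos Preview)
Your proposal is correct and follows essentially the same route as the paper's proof: the reductions via lemma \ref{lemma finite surjective indcoh}, the gerbe structure, the finite flat cover trivializing the gerbe, the appeal to lemma \ref{lemma indcoh locally on BG}, and the final descent along the finite flat surjection are all identical. The only cosmetic difference is in the open-immersion step: the paper phrases the consequence of lemma \ref{lemma tensor to open} as saying that pullback to $p^{-1}(U) \times_S X$ has a fully faithful colimit-preserving right adjoint (which immediately forces compact generation of the target), whereas you argue via the kernel being compactly generated; both are valid, and the paper's formulation avoids having to analyze the kernel.
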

\begin{proof}
By lemma \ref{lemma finite surjective indcoh} it suffices to consider the case when $X$ is reduced. By \cite{stacks-project} proposition 06RC we may, after replacing $X$ by a dense open substack, assume that $X$ is a gerbe over a classical algebraic space. Applying \cite{stacks-project} proposition 06NH we may further reduce to the case when $X$ is a gerbe over a (classical) affine scheme $S$. Changing base to a dense open subset of $S$ we may assume that the connected components of $S$ are irreducible. It suffices then to prove the lemma for each irreducible component of $S$, so that we may assume that $S$ is irreducible. Applying lemma \ref{lemma finite surjective indcoh} we may further assume that $S$ is integral. 

Let $p: S' \rightarrow S$ be an affine cover with the property that $X$ has a section over $S'$. Changing base to a dense open subscheme of $S$ we may assume that there exists a finite flat surjection $S'' \rightarrow S$ refining $S' \rightarrow S$, whose base change to the generic point of $S$ is the spectrum of a field. Replacing $S'$ by $S''$ we may now assume that $p$ is finite flat and that the generic fiber of $p$ is the spectrum of a field. Changing base to a dense open subscheme of $S$ we may now assume that $S'$ is integral.

Let $X' = S' \times_S X'$. Then $X'$ is of the form $\B G$ for some algebraic group $G$ over $S'$ with affine fibers. By proposition \ref{proposition algebraic space over well stabilized} we have that $X'$ is well stabilized. It now follows from proposition \ref{proposition BG well stabilized iff G good} that $G$ is good. Consequently, applying lemma \ref{lemma indcoh locally on BG} we may find a dense open subscheme $U'$ of $S'$ such that $\IndCoh(U' \times_S X)$ is compactly generated.

Since $p$ is open we have that $p(U')$ contains the generic point of $S$, and hence the generic fiber of $p|_{U'}$ is equal to the generic fiber of $p$. Consequently, we may find an affine dense open subscheme $U$ of $S$ with the property that $p^{-1}(U)$ is contained in $U'$. Applying lemma \ref{lemma tensor to open} we deduce that the pullback functor  $\IndCoh(U' \times_S X) \rightarrow \IndCoh(p^{-1}(U) \times_S X)$ admits a fully faithful and colimit preserving right adjoint, and hence $\IndCoh(p^{-1}(U) \times_S X)$ is compactly generated. We now have that $\IndCoh(U \times_S X)$ is compactly generated by lemma \ref{lemma finite surjective indcoh}.
\end{proof}

\begin{proof}[Proof of theorem \ref{theorem indcoh}]
We first prove (b). Since $\Ocal_X$ is truncated the projection $\Psi_X: \IndCoh(X) \rightarrow \QCoh(X)$ admits a fully faithful left adjoint, and consequently $\QCoh(X)$ is a retract of $\IndCoh(X)$ in $\Pr_\st$. It follows that $\QCoh(X)$ is also dualizable, and hence $X$ is well stabilized by corollary \ref{corollary non dualizability poorly stabilized}.

It remains to prove (a). Using lemma \ref{lemma finite surjective indcoh} we may assume that $X$ is reduced. By lemma \ref{lemma indcoh tensored up affine} we have
\[
\IndCoh(X) = \Gamma(X, \IndCoh_X)
\]
where $\IndCoh_X$ is the object of $\twoQCoh(X)$ which assigns the category $\IndCoh(U)$ to every smooth morphism $p: U \rightarrow X$ with $U$ affine. Applying corollary \ref{corollary consequences controlled} we now see that $\IndCoh(X)$ is dualizable.

By Noetherian induction we may assume that $\IndCoh(Z)$ is compactly generated for every proper reduced closed substack $Z$ of $X$. The case when $X$ is empty is clear, so assume that $X$ is nonempty. By lemma \ref{lemma indcoh locally} there exists a dense open substack $U$ of $X$ such that $\IndCoh(U)$ is compactly generated. Let $Z$ be the reduced complement of $U$ and consider the short exact sequence of stable categories
\[
0 \rightarrow \IndCoh(X)_Z \rightarrow \IndCoh(X) \rightarrow \IndCoh(U) \rightarrow 0.
\]
By our inductive hypothesis we have that $\IndCoh(Z)$ is compactly generated. Combined with the fact that the pushforward functor $\IndCoh(Z) \rightarrow \IndCoh(X)_{Z}$ admits a colimit preserving and conservative right adjoint we deduce that $\IndCoh(X)_Z$ is compactly generated as well. The fact that $\IndCoh(X)$ is compactly generated now follows from an application of \cite{Efimov} proposition 3.3 (since we already know it to be dualizable).
\end{proof}


\ifx\inmain\undefined
\bibliographystyle{myamsalpha2}
\bibliography{References}
\fi


\appendix

\section{Classifying stacks of infinite type affine group schemes}\label{appendix infinite type}

The goal of this appendix is to study the question of dualizability of $\QCoh(\B G)$ for affine group schemes beyond the finite type setting. We focus on the case of fields of characteristic zero, where the difference between finite and infinite type is stark: while $\QCoh(\B G)$ is always dualizable (and, in fact, compactly generated) for $G$ an affine algebraic group in characteristic zero, this is often not the case in the infinite type setting. 

We may formulate our main result as follows:

\begin{theorem}\label{theorem dualizable infinite}
Let $G$ be a affine group scheme over a field $k$ of characteristic zero. The following are equivalent:
\begin{enumerate}[\normalfont \indent (1)]
\item $\QCoh(\B G)$ is dualizable.
\item $\QCoh(\B G)$ is compactly generated.
\item $\Gamma(\B G, -)$ has bounded cohomological amplitude.
\item $\Ocal_{\B G}$ is a compact object of $\QCoh(\B G)$.
\end{enumerate}
\end{theorem}

Before giving the proof of theorem \ref{theorem dualizable infinite} we discuss some consequences.

\begin{corollary}\label{corollary BGainfty}
Let $k$ be a field and let $I$ be an infinite set. Then $\QCoh(\B \GG_{a, k}^I)$ is not dualizable.
\end{corollary}
\begin{proof}
In characteristic $p$ the result follows from a combination of theorem \ref{theorem non dualizability bga} and \cite{Efimov} proposition 1.57, using the fact that given an element in $I$ the corresponding functor of pullback along $\B \GG_{a, k} \rightarrow \B \GG_{a, k}^I$ admits a conservative and colimit preserving right adjoint. Meanwhile, in characteristic zero it follows from theorem \ref{theorem dualizable infinite} together with the fact that 
\[
\Gamma(\B \GG_{a, k}^I, \Ocal_{\B \GG_{a, k}^I}) = \bigotimes_I \Gamma(\B \GG_{a, k}, \Ocal_{\GG_{a,k}})
\]
is cohomologically unbounded when $I$ is infinite.
\end{proof}

\begin{corollary}\label{corollary pass to subgroup}
Let $G$ be an affine group scheme over a field $k$ of characteristic zero. If $\QCoh(\B G)$ is dualizable then $\QCoh(\B H)$ is dualizable for every closed subgroup $H$ of $G$. 
\end{corollary}
\begin{proof}
By theorem \ref{theorem dualizable infinite} it suffices to show that if $\Ocal_{\B G}$ is compact then $\Ocal_{\B H}$ is compact. This follows from the fact that the functor of pullback along $\B H \rightarrow \B G$ has a colimit preserving right adjoint.
\end{proof}

\begin{corollary}\label{coro if you contain product}
Let $G$ be an affine group scheme over a field $k$ of characteristic zero. If $G$ contains a closed subgroup isomorphic to an infinite product of copies of $\GG_{a, k}$ then $\QCoh(\B G)$ is not dualizable.
\end{corollary}
\begin{proof}
Combine corollaries \ref{corollary BGainfty} and \ref{corollary pass to subgroup}
\end{proof}

\begin{example}\label{example paths}
Let $k$ be a field and let $G$ be an affine algebraic group over $k$. For each $i\geq 1$ let $G[t]/t^i$ be the affine algebraic group over $k$ whose functor of points sends each $R$-algebra to $G(R[t]/t^i)$, and let $G[[t]]$ be the limit of the diagram
\[
G[t]/t^1 \leftarrow G[t]/t^2 \leftarrow G[t]/t^3 \leftarrow \ldots .
\] 
For each $i$ we have a fiber sequence of groups
\[
G[t]/t^{i+1} \rightarrow G[t]/t^i \rightarrow \B \mathfrak{g},
\]
and hence the kernel of the map $G[t]/t^{i+1} \rightarrow G[t]/t^i$ is given by $\mathfrak{g}$. In particular, the kernels of the maps $G \rightarrow G[t]/t^i$ are pro-unipotent; these are called the congruence subgroups of $G[[t]]$. 

We claim that if $G$ is positive dimensional then $\QCoh(\B G[[t]])$ is not dualizable. Let $K$ be an algebraic closure of $k$. Then the functor of pullback $\QCoh(\B G[[t]]) \rightarrow \QCoh(\B G_K[[t]])$ admits a colimit preserving and conservative right adjoint, so using \cite{Efimov} proposition 1.57 we may reduce to the case when $k$ is algebraically closed. In this case, $G$ contains a copy of $\GG_{a, k}$ or $\GG_{m, k}$. Then $G[[t]]$ contains a copy of $\GG_{a,k}[[t]]$ or $\GG_{m, k}[[t]]$, whose congruence  subgroups are isomorphic to infinite product of copies of $\GG_{a, k}$. The fact that $\QCoh(\B G[[t]])$ is not dualizable is now a consequence of corollary \ref{coro if you contain product}.
\end{example}

We now turn to the proof of theorem \ref{theorem dualizable infinite}.

\begin{lemma}\label{lemma short exact sequence bounds}
Let
\[
1 \rightarrow N \rightarrow G \rightarrow H \rightarrow 1
\]
be a short exact sequence of affine group schemes over a field $k$. If $\Gamma(\B N, -)$ and $\Gamma(\B H, -)$ have cohomological amplitude bounded by $d \geq 0$ then $\Gamma(\B G, -)$ has cohomological amplitude bounded by $2d$.
\end{lemma}
\begin{proof}
The functor $\Gamma(\B G, -)$ factors as the composition of the functor $\Gamma(\B H, -)$ with the functor of pushforward along the projection $p: \B G \rightarrow \B H$, so it suffices to show that the latter has cohomological amplitude bounded by $d$. By lemma \ref{lemma check bound on heart} we may reduce to showing that $p_* \Fcal$ is $(-d)$-connective for every object $\Fcal$ in $\QCoh(\B G)^\heartsuit$. Indeed, the fiber of $p_*\Fcal$ is given by $\Gamma(\B N, \Fcal')$, where $\Fcal'$ is the pullback of $\Fcal$ to $\B N$, so our claim follows from the given bound on $\Gamma(\B N, -)$.
\end{proof}

\begin{lemma}\label{lemma colimit on heart}
Let $G_\alpha$ be a cofiltered diagram of affine group schemes over a field $k$, with limit $G$. Then the canonical functor
\[
\colim \QCoh(\B G_\alpha)^\heartsuit \rightarrow \QCoh(\B G)^\heartsuit
\]
is an equivalence.
\end{lemma}
\begin{proof}
It suffices to show that the functor $G \mapsto \QCoh(\B G)^\heartsuit$ on the category of affine group schemes is Kan extended from its restriction to the full subcategory on the affine algebraic groups. To see this it suffices to prove that if $G_\alpha$ is a cofiltered diagram of affine algebraic groups and quotient maps with limit $G$ then the canonical functor
\[
\colim \QCoh(\B G_\alpha)^\heartsuit \rightarrow \QCoh(\B G)^\heartsuit
\]
is an equivalence. All of the categories above are compactly generated and the functors are compact, so we reduce to showing that the comparison map
\[
(\colim \QCoh(\B G_\alpha)^\heartsuit)^\omega \rightarrow (\QCoh(\B G)^\heartsuit)^\omega
\]
is an equivalence, where the colimit is taken in $\Cat$. Since the functors of restriction of representations along quotients of group schemes are fully faithful, we see that the above comparison map is fully faithful. To finish the proof of the lemma it remains to show that any finite dimensional representation of $G$ is restricted from $G_\alpha$ for some $\alpha$. Indeed, this follows from the fact that every finite dimensional representation of $G$ is restricted from the defining representation of the algebraic group $\GL_{n, k}$ for some $n$.
\end{proof}

\begin{proof}[Proof of theorem \ref{theorem dualizable infinite}]
We show first the equivalence between (3) and (4). Suppose first that (3) holds, so that $\Gamma(\B G, -)$ has cohomological amplitude bounded by $d$ for some $d \geq 0$. Our goal is to show that for every filtered system $\Fcal_\alpha$ of objects of $\QCoh(\B G)$ we have
\[
\tau_{\geq 0} \Hom_{\QCoh(\B G)}(\Ocal_{\B G}, \colim \Fcal_\alpha) = \colim \tau_{\geq 0} \Hom_{\QCoh(\B G)}(\Ocal_{\B G},\Fcal_\alpha).
\]
Passing to the connective covers of $\Fcal_\alpha$ does not change either side, so without loss of generality we assume that $\Fcal_\alpha$ is connective for all $\alpha$. To prove the above isomorphism it suffices to show that for every $m \geq 0$ the induced map
\[
\tau_{\leq m} \tau_{\geq 0} \Hom_{\QCoh(\B G)}(\Ocal_{\B G}, \colim \Fcal_\alpha) = \tau_{\leq m} \colim \tau_{\geq 0} \Hom_{\QCoh(\B G)}(\Ocal_{\B G},\Fcal_\alpha)
\]
is an isomorphism. Using our bound on $\Gamma(\B G, -)$ we see that both sides do not change if we replace $\Fcal_\alpha$ by $\tau_{\leq m+d} \Fcal_\alpha$. We have thus reduced to showing the case when $\Fcal_\alpha$ is $m + d$-truncated, which follows from the fact that $\Ocal_{\B G}$ is an almost compact object of $\QCoh(\B G)$.

Suppose now that (4) holds. Then for every sequence of connective objects $\Fcal_i$ in $\QCoh(X)$ we have
\[
\textstyle{\prod} \Gamma(\B G, \Fcal_i)[i] = \Gamma(\B G, \textstyle{\prod}\Fcal_i[i]) = \Gamma(\B G, \bigoplus\Fcal_i[i]) = \bigoplus \Gamma(\B G, \Fcal_i)[i].
\]
Passing to $\pi_0$ we deduce that every sequence of maps $\Ocal_{\B G} \rightarrow \Fcal_i[i]$ is eventually zero. Since the sequence $\Fcal_i$ was arbitrary, we deduce that $\Gamma(\B G, -)$ has bounded cohomological amplitude.

We claim now that (3) and (4) imply (2). Suppose that $\Gamma(\B G, -)$ has cohomological amplitude bounded by $d \geq 0$. Note that if $\Vcal$ is a finite dimensional representation of $G$ we have
\[
\Hom_{\QCoh(\B G)}(\Vcal, -) = \Hom_{\QCoh(\B G)}(\Ocal_{\B G}, \Vcal^\vee \otimes -)
\]
which implies that $\Vcal$ is a compact object of $\QCoh(X)$ and the functor $\Hom_{\QCoh(\B G)}(\Vcal, -)$ has cohomological amplitude bounded by $d$. Assertion (2) will then follow if we show that these objects generate $\QCoh(\bg)$ under colimits and shifts. To see this it is enough to show that $\QCoh(\bg)$ is the derived category of its heart, which will follow if we show that every connective object $\Fcal$ receives an effective epimorphism from an object in the heart. Since $\QCoh(\bg)$ is the completed derived category of its heart we may find an object $\Gcal$ in $\QCoh(\bg)^\heartsuit$ and an effective epimorphism $f: \Gcal \rightarrow \tau_{\leq d}\Fcal$. Without loss of generality we may assume that $\Gcal$ is a direct sum of finite dimensional representations of $G$. Then $\Hom_{\QCoh(\bg)}(\Gcal, -)$ has cohomological amplitude bounded by $d$, and consequently the map $f$ lifts to an effective epimorphism $\Gcal \rightarrow \Fcal$. This finishes the proof of (2).

Condition (2) clearly implies condition (1). We will finish the proof by showing that (1) implies (3).  Write $G$ as the limit of a cofiltered diagram $G_\alpha$ of affine algebraic groups and quotient maps, and for each $\alpha$ let $N_\alpha$ be the kernel of the projection $G \rightarrow G_\alpha$. Then we have that $\lim N_\alpha$ is the trivial group, and consequently by lemma \ref{lemma colimit on heart} we have an equivalence
\[
\colim \QCoh(\B N_\alpha)^\heartsuit = \Mod_k.
\]
Applying lemma \ref{lemma t structure colimit} we have that the above upgrades to an equivalence
\[
\colim \QCoh(\B N_\alpha) = \Mod_k.
\]
Since the functor of pullback $\QCoh(\B G) \rightarrow \QCoh(\B N_\alpha)$ admits a conservative and colimit preserving right adjoint, we see that $\QCoh(\B N_\alpha)$ is dualizable for all $\alpha$ by \cite{Efimov} proposition 1.57. Applying \cite{Fully} theorem 3.3.1 and proposition 3.2.9 we see that there exists $\alpha$ and a compact object $\Fcal$ in $\QCoh(\B N_\alpha)$ whose image in $\Mod_k$ is given by $k$. In particular, $\Fcal$ is a compact object of the heart of $\QCoh(\B N_\alpha)$. Since $\Ocal_{\B N_\alpha}$ is also a compact object of $\QCoh(\B N_\alpha)^\heartsuit$ whose image in $\Mod_{k}^\heartsuit$ is given by $k$ we have that there exists an index $\beta \rightarrow \alpha$ such that the pullback of $\Fcal$ to $\B N_\beta$ is isomorphic to $\Ocal_{\B N_\beta}$. In other words, we have proven that the structure sheaf of $\B N_\beta$ is compact. Using that (3) and (4) are equivalent for $N_\beta$ we deduce that $\Gamma(\B N_\beta, -)$ has bounded cohomological amplitude. We now conclude that (3) holds, by virtue of lemma \ref{lemma short exact sequence bounds}.
\end{proof}


\ifx\inmain\undefined
\bibliographystyle{myamsalpha2}
\bibliography{References}
\fi


\bibliographystyle{myamsalpha2}
\bibliography{References}
 
\end{document}